\def\comment#1{}
\newcommand{\SW}{\mathcal{SW}_p}
\newtheorem{theorem}{Theorem}
\newtheorem{assumptions}[theorem]{Assumptions}
\newtheorem{definition}[theorem]{Definition}
\newtheorem{lemma}[theorem]{Lemma}
\newtheorem{proposition}[theorem]{Proposition}
\theoremstyle{remark}
\newtheorem{remark}[theorem]{Remark}
\newtheorem{example}[theorem]{Example}
 \newcommand{\eps}{\varepsilon}
 \newcommand{\PP}{\mathbb{P}}
 \newcommand{\W}{\mathcal{W}}
 \renewcommand{\phi}{\varphi}
\newcommand{\pp}{\mathcal P}
\newcommand{\E}{\mathbb{E}}
\renewcommand{\PP}{\mathbb{P}}
\newcommand{\Q}{\mathbb{Q}}
\newcommand{\indep}{\rotatebox[origin=c]{90}{$\models$}}
\newcommand{\bes}{\begin{subequations}}
\newcommand{\ees}{\end{subequations}}
\newcommand{\eea}{\end{eqnarray}}
\renewcommand{\eps}{\varepsilon}
\renewcommand{\epsilon}{\varepsilon}
\newcommand{\fourIdx}[5]{%
\setbox1=\hbox{\ensuremath{^{#1}}}%
 \setbox2=\hbox{\ensuremath{_{#2}}}%
 \setbox5=\hbox{\ensuremath{#5}}%
 \hspace{\ifnum\wd1>\wd2\wd1\else\wd2\fi}%
 \ensuremath{\copy5^{\hspace{-\wd1}\hspace{-\wd5}#1\hspace{\wd5}#3}%
 _{\hspace{-\wd2}\hspace{-\wd5}#2\hspace{\wd5}#4}%
 }}
\numberwithin{equation}{section}
\numberwithin{theorem}{section}
\renewcommand{\subset}{\subseteq}
\newcommand{\mo}{\bar{M}}
\newcommand{\vo}{\bar{v}}
\newcommand{\so}{\bar{\sigma}}
\renewcommand{\mathrm}{}
\newcommand{\mylabel}[2]{#2\def\@currentlabel{#2}\label{#1}}
\begin{document}

\title[Specific Wasserstein divergence between continuous martingales]{Specific Wasserstein divergence between continuous martingales}

\author{Julio Backhoff-Veraguas, Xin Zhang}

\begin{abstract}

Defining a divergence between the laws of continuous martingales is a delicate task, owing to the fact that these laws tend to be singular to each other. An important idea, by Gantert \cite{gantert1991einige}, is to instead consider a scaling limit of the relative entropy between such continuous martingales sampled over a finite time grid. This gives rise to the concept of specific relative entropy. In order to develop a general theory of divergences between continuous martingales, it is  natural to replace the role of the relative entropy by a different notion of discrepancy between finite dimensional probability distributions. We take a first step in this direction, taking a power $p$ of the Wasserstein distance. We call the newly obtained scaling limit the specific $p$-Wasserstein divergence.

In this paper we prove that the specific $p$-Wasserstein divergence is well-defined, exhibit an explicit expression for it, and compare it with the specific relative entropy and adapted Wasserstein distance on a class of SDEs. Then we consider specific $p$-Wasserstein divergence optimization over the set of win-martingales. Finally, we characterize the solution of such optimization problems for all $p>0$ and, surprisingly, we single out the case $p=1/2$ as the one with the best probabilistic properties.

\end{abstract}
\keywords{Entropy, win-martingale, martingale optimal transport, Wasserstein distance, Schr\"{o}dinger problem.}

\thanks{This research was funded 
in whole or in part by the Austrian Science Fund (FWF) DOI 10.55776/P36835. 
%For  open access purposes, the author has applied a CC BY public copyright 
%license to any author accepted manuscript version arising from this 
%submission.
}

\maketitle

\section{Introduction}

The aim of this paper is to introduce a novel notion of divergence between continuous martingales, and thereafter to fully study and solve divergence optimization problems over the set of win martingales (used as models for prediction markets \cite{MR3096465}). 

Identifying two real-valued continuous martingales as probability measures $\mathbb Q, \mathbb P$ on the continuous path space \[C([0,1];\mathbb{R}),\] a natural choice for divergence would be the relative entropy \[H(\mathbb Q || \mathbb P)=\int \frac{d\mathbb Q}{d \mathbb P}\, \log\big(\frac{d\mathbb Q}{d \mathbb P}\big) \, d\mathbb P,\] or its generalization, the $f$-divergence $\int f (\frac{d\mathbb Q}{d \mathbb P})\, d\mathbb P$. This naive approach leads to an immediate difficulty, namely, that the laws of continuous martingales tend to be singular to each other and hence have a trivial divergence in the above sense. As an example, the reader can consider  $\mathbb Q, \mathbb P$ to be respectively the laws of Brownian motion and two times the Brownian motion, and check that these measures are concentrated on disjoint sets. 

A natural approach to circumvent this difficulty is to rather discretize the martingales in time, compare them at the discrete-time level, and then consider a (scaling) limit in which the time mesh-size goes to zero. This approach was introduced by Gantert in \cite{gantert1991einige}, and more recently refined by F\"ollmer in \cite{MR4433813}, leading to the notion of specific relative entropy between continuous martingales. Independently, this concept appeared in mathematical finance in the works of Avellaneda et al \cite{Av01} and Cohen and Dolinsky \cite{CoDo22}, {but see also Dolinsky and Zhang \cite{DoZh25} as well as the recent articles by Benamou et al \cite{BeChHoLoVi24,BeChLo24} for contemporary applications.} The key in this construction is to use the conventional relative entropy when comparing the laws of the time-discretized martingales. In this article we introduce and study the \emph{specific Wasserstein divergence}, obtained by considering a power of the Wasserstein distance instead of the relative entropy in the aforementioned scaling limit. We view our work as a first step towards a general theory of divergences between continuous martingales. 

{ At this point the reader may be wondering about the need for a theory of divergences between continuous martingales. Why does one need more objects than the specific relative entropy? Although our reasons are explained in detail throughout the article, we collect here our three main motivations:
\begin{itemize}
    \item The specific relative entropy is a tricky mathematical object: in the pre-limit its value can easily be equal to infinity, and this already happens in natural examples. Furthermore, to obtain a tractable formula in the continuous-time scaling limit, one needs very strong assumptions on the martingales involved. The specific Wasserstein divergence, proposed in this article, has superior properties when it comes to finiteness and convergence.
    \item We want to understand the reasons why the construction of the specific relative entropy works. Our study indicates that there are only two properties of the relative entropy which are crucial when one wants to pass to a scaling limit. First, its so-called additive property (also called additive decomposition or chain-rule), namely that the multivariate relative entropy is equal to the an expected sum of univariate relative entropies applied to the conditional laws of the measures involved. Second, the fact that if two univariate measures are (almost) Gaussian, then their relative entropy is (almost) explicit, and very tractable, in terms of the involved variances. Following this rationale, one can define a multivariate divergence via the expected sum of univariate divergences applied to the conditional laws. Specializing to the case when the univariate divergence is a power of the Wasserstein distance, one obtains explicit and tractable expressions in the (almost) Gaussian case. Passing to the scaling-limit we obtain our specific Wasserstein divergences.
    \item We want to give the modeler more choices, with a sound discrete-to-continuous time limit theory, when it comes to applications e.g.\ in control and finance. As it happens, the specific relative entropy of a martingale with diffusion coefficient $(\sigma_t)_t$, with respect to Brownian motion as a reference process, has in the best case scenario the formula $\frac{1}{2}\E \left[\int_0^1 \left\{ \sigma_t^2-1-\log \sigma_t^2 \right\} \, dt \right]$, while for the specific Wasserstein divergence we will eventually obtain in one of our main results the formula $(2/\pi)^{p/2}\mathbb E_{\mathbb Q}\left [\int_0^1 \left|\,|\sigma_t|-1\,\right |^{p} \, dt \right ]$, where $p>0$ is a parameter of our choice. This provides a family of objects that can be used as loss functions, or regularizing costs, in calibration problems. Alternatively, they could be used to describe neighborhoods around a reference model (here Brownian motion) in robust optimization problems. As an advantage vis-\`a-vis transport based distances, there is no need for further optimization over a set of couplings (and this remains true if we replace Brownian motion by other reference measure such as nice diffusion martingales). As an advantage  vis-\`a-vis the specific relative entropy, the right choice of $p$ can either induce or discourage $\sigma$ to get close to zero, depending on what the modeler wants. Furthermore, the expression of  the specific Wasserstein divergence is more algebraic, which we believe can be advantageous in certain situations, e.g.\ the Hamiltonian in stochastic control problems, or even the marginal distributions of an optimal martingale, can be made more tractable depending on $p$.
\end{itemize}
}

\subsection{Specific Wasserstein divergence}

Denote throughout by $\mathcal P(\mathcal X)$ the space of probability measures on a Polish space $\mathcal X$. 

Given a function $F: \mathcal{P}(\mathbb{R}) \times \mathcal{P}(\mathbb{R}) \to \mathbb{R}_+$ which only vanishes on the diagonal, we may interpret  $D_F^1(\mu,\nu):=F(\mu,\nu)$ as the \emph{discrepancy} between $\mu$ and $\nu\in \mathcal{P}(\mathbb{R})$. Inductively, after having defined $D_F^{N-1}: \mathcal P(\mathbb R^{N-1})\times \mathcal P(\mathbb R^{N-1}) \to  \mathbb{R}_{+}$, a discrepancy functional between elements in $\mathcal P(\mathbb R^{N-1})$, then a natural choice for a discrepancy functional between elements in $\mathcal P(\mathbb R^{N})$ is given by 
\begin{align}\label{eq:intro_discrep}
D_F^{N}(\mathbb Q || \mathbb P):=D_F^{N-1}(\mathbb Q_{1:N-1} || \mathbb P_{1:N-1})+\int F(\mathbb Q_N^{x_{1:N-1}}, \mathbb P_N^{x_{1:N-1}}) \, d\mathbb Q_{1:N-1}(x_{1:N-1}),
\end{align}
where $\mathbb{Q}_{1:N-1}$ is the projection of $\mathbb Q$ on the first $N-1$ coordinates, $x_{1:N-1}=(x_1,\dotso, x_{N-1}) \in \mathbb{R}^{N-1}$,  $\mathbb Q_N^{x_{1:N-1}}$ is the conditional law of the $N$-th marginal given the previous trajectory $x_{1:N-1}$, with similar notations for $\mathbb{P}_{1:N-1}$ and $\mathbb P_N^{x_{1:N-1}}$. 

If now  $\mathbb Q ,\mathbb P \in \mathcal{P}(C([0,1];\mathbb{R}))$, a natural choice for a discrepancy functional between  $\mathbb Q $ and $\mathbb P $ is to take
%\[D_F^s( \mathbb Q|| \mathbb P):=\lim\limits_{N \to \infty} c_N D^N_F((\mathbb Q)_N||(\mathbb P)_N) ,\]
\begin{align}\label{eq:introlimit}
\lim\limits_{N \to \infty} c^F_N D^N_F((\mathbb Q)_N||(\mathbb P)_N) ,
\end{align}
assuming that the limit exists and that a universal scaling sequence $\{c^F_N\}_N\subset \mathbb R_+$ has been found. Here and throughout we use the notation $(\mathbb Q)_N$ for the push-forward (i.e., image measure) of $\mathbb Q$ under the map
\[C([0,1];\mathbb R)\ni \omega \mapsto (\omega_{1/N},\omega_{2/N},\dots,\omega_{N/N}), \]
%distribution of \[(X_{1/N},X_{2/N},\dots,X_{N/N}),\] whenever $\mathbb Q=\text{Law}((X_t)_{t \in [0,1]})$, 
and likewise for $(\mathbb P)_N$.

We remark that \eqref{eq:intro_discrep} is akin to the time consistency property in the theory of dynamic risk measures, and is very natural given the temporal structure of processes like martingales.  Therefore the discrepancy functional defined  as in \eqref{eq:introlimit} could be used as penalty functions for risk measures of processes; see Remark~\ref{rmk:timeconsistency} for more details.
 Taking $F$ to be the relative entropy $H$, the construction described so far gives rise (with $c^F_N=N^{-1}$) to the specific relative entropy. Thus the specific relative entropy can be understood as the rate of increase of the relative entropy as the number of marginals taken into account increases.
 
 In this paper, we will be concerned with the case \[F=\W_1^p,\] where $\W_1$ is the celebrated $1$-Wasserstein distance and $p$ is any positive real number. We call the resulting object the  \emph{specific $p$-Wasserstein divergence} (though sometimes we omit to mention the parameter $p$), and denote it by $\mathcal{SW}_p$. Moreover, we will only be concerned with continuous martingale laws throughout. As a side note, we remark that taking a different Wasserstein distance than $\mathcal W_1$ would change very little the results in this paper. 
 
 We proceed to describe our first main result: Suppose $\mathbb Q$ is the law of a continuous martingale starting wlog.\ at 0 and admitting an absolutely continuous quadratic variation with density denoted by $\sigma^2$. Suppose that $\mathbb P$ is the law of standard Brownian motion. In Theorem~\ref{thm:convergence} we obtain the existence of the specific Wasserstein divergence together with its explicit formula
%\begin{tcolorbox}
\begin{align*}
\SW(\mathbb Q || \mathbb P):= & \lim_{\substack{N=2^n \\ n\to\infty}} N^{p/2-1}D_{\W}^{N,p}((\mathbb Q)_N || (\mathbb P)_N ) =(2/\pi)^{p/2}\mathbb E_{\mathbb Q}\left [\int_0^1 \left|\,|\sigma_t|-1\,\right|^{p} \, dt \right ],
\end{align*}
were we denote here and throughout $D^N_{\W^p_1}$ by $D_{\W}^{N,p}$, for brevity.
In fact, in Theorem~\ref{thm:convergence} we can allow the law $\mathbb P$ to be more general than the Wiener measure, e.g.\ it can come from the law of a martingale diffusion with a well-behaved volatility coefficient. One notable aspect of this result is that the assumptions needed are vastly weaker than the ones needed for the corresponding result in the case of the specific relative entropy. In particular, we can handle the situation where $\mathbb P$ is the constant martingale (i.e., a Brownian motion with zero volatility), and more generally, certain cases where $(\mathbb Q)_N$ and $(\mathbb P)_N$ may be singular to each other for every $N$. We consider this result a first step towards building a general theory of divergences between continuous martingales, since unlike the case of the specific relative entropy, the construction here gives a whole family of divergences.
%\end{tcolorbox}

Related to the above main result, we recover in Proposition~\ref{prop:inequalities} a functional inequality by F\"ollmer \cite{MR4433813} concerning (in our terminology) the specific Wasserstein divergence, specific relative entropy, and an adapted Wasserstein distance. Our proof method is based on Theorem~\ref{thm:convergence} and is different from the original approach by this author.  Moreover, we find that the specific Wasserstein divergence and the adapted Wasserstein distance induce the same topology on given classes of SDEs; see Remark~\ref{rmk:toposame}.

\subsection{Optimization over win-martingales}

The main application of the concept of specific Wasserstein divergence that we want to put forward in this paper, concerns a problem of optimization over the set of win-martingales. A continuous-time martingale $(X_t)$ over time $t\in [0,1]$ is called a win-martingale if it starts with a deterministic position $X_0=x_0 \in (0,1)$ and ends up at time 1 on either $0$ or $1$. In other words, it transports $\delta_{x_0}$ at time $0$ to $Bernoulli(x_0)$ at time $1$. Such martingales have been proposed as models of prediction markets (cf.\ \cite{MR3096465}) and optimization problems over the set of win-martingales were proposed by Aldous \cite{aldous_winmartingale}. { In the context of information acquisition, win-martingales are also known as belief-martingales \cite{El17,JAE15,Zh22}.}
 
 Given a fixed initial position $x_0 \in (0,1)$ and $p>0$, our aim is to optimize the specific Wasserstein divergence $\mathcal{SW}_p(\mathbb Q || \mathbb P_\delta)$ among all continuous win-martingales $\mathbb Q$ started at $x_0$ and admitting an absolutely continuous quadratic variation, whereby $\mathbb P_\delta$ denotes the constant martingale {(see Remark \ref{rmk:2.2} for a discussion on how we interpret the transition probabilities of $\mathbb P_\delta$)}.  More specifically, we maximize $\mathcal{SW}_p(\mathbb Q || \mathbb P_\delta)$ for $p \in (0,2)$ and minimize $\mathcal{SW}_p(\mathbb Q || \mathbb P_\delta)$ for $p \in [2,\infty)$. { There are two main motivations for analyzing this problem. On the one hand, the quantity $\mathcal{SW}_p(\mathbb Q || \mathbb P_{\delta})$ can be interpreted as an entertainment utility functional, relevant to the design of mystery novels, political primaries, game shows, and auctions; see, e.g., \cite{JAE15}. On the other hand, in Lemma~\ref{lem:valuefunctionlimit}, we show that $\mathcal{SW}_p(\cdot|| \mathbb P_{\delta})$ arises as the limit (in the sense of $\Gamma$-convergence, for $p>2$) of $\mathcal{SW}_p(\cdot || \mathbb P_{\epsilon})$  as $\epsilon \to 0$, where $\mathbb P_{\epsilon}$ denotes the law of the scaled Wiener process $(\epsilon W_t)_t$. Furthermore, we establish the convergence of the value functions associated with the corresponding optimization problems over win-martingales.
As it turns out, although the win-martingale optimization problem with cost $\mathcal{SW}(\cdot, \mathrm{Law}(\epsilon B))$ can be characterized via HJB equations, we currently do not know how to identify the optimizers. This is because the equations, derived either from the first-order condition in Lemma 3.3 or from the HJB equations, are quite complicated. As a result, it is unclear whether they admit regular solutions, let alone closed-form expressions for the optimal $\sigma$. Fortunately, for the cost $\mathcal{SW}(\cdot, \mathbb{P}_\delta)$, the optimization problem can be (almost) explicitly solved, allowing us to establish connections with the Schr\"{o}dinger problem and the filtering problem.

}

 These optimization problems are related to the one in \cite{BBexciting}, wherein the specific relative entropy with respect to standard Brownian motion is minimized over this set of martingales. Similarly to this reference, we employ first order conditions to characterize in  Proposition~\ref{prop:imsoln}, via ordinary differential equations, the (Markovian) volatility coefficient of a candidate optimal martingale. Then, in Section~\ref{sec:verification}, the optimality of the candidate is verified by making use of the associated HJB equation and stochastic analysis arguments. Hence we obtain semi-explicit solutions for a whole family of continuous-time martingale optimal transport problems (which cannot be transformed into an Skorokhod Embedding Problem), usually considered a difficult task.
 
 We identify {three cases} where the solution to our problem is fully explicit. The first is for $p=1$, where the solution is a so-called Bass martingale (see \cite{backhoff2020martingale,2023arXiv230611019B}). As this object is well studied we do not explore this case in any detail. { The second is when $p=2$. In this case every win-martingale is optimal, due to It\^{o}'s isometry. However we identify a particular one which can be viewed as the limit of optimal martingales as $p \to 2$. As it turns out, this is precisely the so-called Wright-Fisher (martingale) diffusion; see Remark~\ref{rmk:wf-diffusion}.
The third case is $p=1/2$, which we will explore in much more depth.} In this setting the unique optimal win martingale solves the SDE
 \[dM_t=\sqrt{\frac{2}{1-t}}M_t(1-M_t)\, dB_t. \]
 
 In order to give some intuition, we provide some numerical simulations of the Bass martingale and $(M_t)$; see Figure~\ref{fig:test1} and Figure \ref{fig:test2}. The reader may notice that the former tends to explore the space relatively faster than the latter. Indeed Lemma~\ref{lem:cx} below justifies that for each moment of time $t \in [0,1]$, the distribution of the Bass martingale is greater than that of $M_t$ in the sense of convex order.

  \begin{figure}[H]
\centering
\begin{minipage}{.5\textwidth}
  \centering
  \includegraphics[width=4cm,height=3cm]{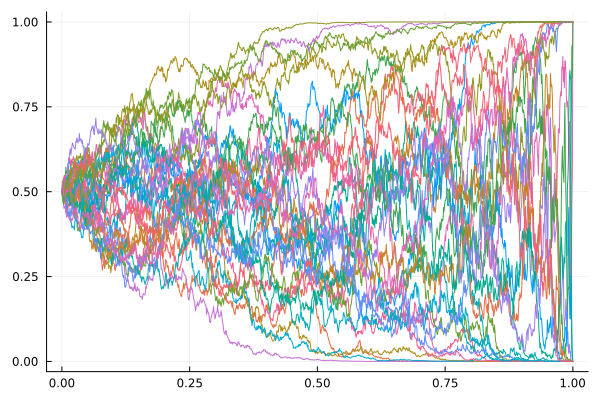}
  \captionof{figure}{Simulations of Bass martingales ($p=1$), from \cite{BBexciting}.}
  \label{fig:test1}
\end{minipage}%
\begin{minipage}{.5\textwidth}
  \centering
  \includegraphics[width=4cm,height=3cm]{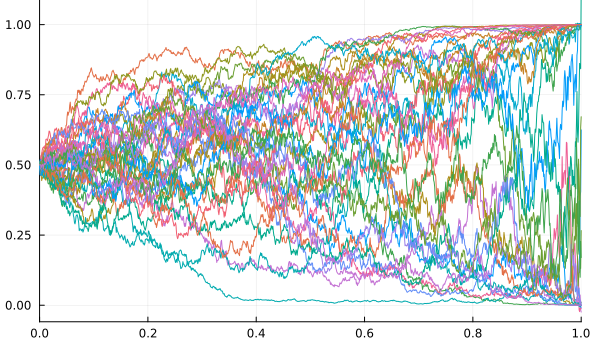}
  \captionof{figure}{Simulations of $(M_t)$, i.e.\ case  $p=1/2$.}
  \label{fig:test2}
\end{minipage}
\end{figure}

If we stretch the time-index set from $[0,1]$ to $[0,\infty)$ in a natural way, the time-changed martingale $Y_t:=M_{1-e^{-t/2}}$ admits the more amenable form
 \[dY_t=Y_t(1-Y_t)\, dW_t, \]
with $W$ a suitable Brownian motion. In this form, it can be readily interpreted in terms of filtering theory. Indeed, $Y_t$ is precisely the conditional probability of the drift being equal to $1$ for a Brownian motion with an unobservable drift which can be either 0 or 1. Moreover, the marginal distributions of $M$ and $Y$ are fully explicit and simple to describe. This is in stark contrast to the situation in \cite{BBexciting}.  If we then perform a change of space-scale $$C_t:=\log(Y_t/(1-Y_t)),$$ so that the resulting process has unit volatility coefficient, it turns out that this process satisfies the SDE
\[dC_t = \frac{1}{2}\tanh\left (\frac{C_t}{2} \right )dt + dW_t.\]
This process can also be interpreted in a number of interesting ways. For instance, the marginal laws of $C$ are a mixture between the marginal laws of a drifted Brownian motion with drift $\pm 1/2$. 
More interestingly, we have the following result, where we denote by $\mathbb W^0_{T,x}$ the law of the Brownian bridge from 0 at time 0 to $x$ at time $T$, and we define $\mathbb W_{T,\pm T/2}^0:= \frac{1}{2}\left(\mathbb W_{T,T/2}^0 + \mathbb W_{T,-T/2}^0 \right)$:\\

\emph{For every $t\in\mathbb R_+$, the law  $\mathbb W_{T,\pm T/2}^0$ restricted to $[0,t]$ converges as $T\to\infty$ to the law of $C$ restricted to $[0,t]$.}\\

This result, which we formalize in Theorem \ref{prop:bridges}, says that $C$ is precisely the law of Brownian motion $W$ conditioned on the event $W_T = \pm T/2$ as $T\to\infty$ (this is a null event of course). In other words, $C$ is a solution (more precisely, a limit of solutions) to the celebrated Schr\"odinger problem (see \cite{Le14}).
To the best of the authors' knowledge, this is the first instance that a solution to a continuous martingale transport problem has been naturally connected to the solution of a likewise continuous Schr\"odinger problem.

\subsection{Connections to the literature}

The specific relative entropy was introduced and interpreted as a rate function by Gantert \cite{gantert1991einige}. More recently \cite{BaBe24,MR4651162}  obtained an explicit formula for this quantity (between time-homogeneous Markov martingales), and F\"{o}llmer \cite{MR4433813} extended Gantert's results and established a Talagrand-type inequality between semimartingale laws using this object. In \cite{BBexciting} the specific relative entropy was used to solve an open question by Aldous (see \cite{2023arXiv230607133G} for an alternative point of view and solution). {The recent articles \cite{BeChHoLoVi24,BeChLo24} have suggested the use of the specific relative entropy as a regularization device for the problem of model calibration in finance, and studied the convergence of discrete- to continuous-time.}

 The adapted Wasserstein distance, appearing in the aforementioned Talagrand-type inequality by F\"ollmer, is a metric between stochastic processes that incorporates their temporal structure; see Definition~\ref{eq:defAW} and \cite{BBBE19,BBLZ,2021arXiv210414245B,lassalle2018causal}. Being defined as an optimization problem over a class of couplings, it is related but also different from the specific Wasserstein divergence.

Win-martingales have been proposed as models for prediction markets (cf.\ \cite{MR3096465}), and optimization problems over the set of win-martingales were proposed by Aldous \cite{aldous_winmartingale} in connection to the aforementioned open question. Such optimization problems are particular instances of martingale optimal transport, a subject that has been extensively studied in the recent years. Following \cite{BeHePe12, BoNu13,GaHeTo13,HoNe12}
martingale versions of the classical transport problem (see e.g.\ \cite{FiGl21,Sa15, Villani, Villani_Old} for recent monographs) are often considered due to applications in  mathematical finance but admit further applications, e.g.\ to the Skorokhod problem \cite{BeCoHu14, BeNuSt19}. In analogy to classical optimal transport, necessary and sufficient conditions for optimality have been established for martingale transport (MOT) problems in discrete time (\cite{BeJu21, BeNuTo16}) but not so much is known for the continuous time problem. Notable exceptions are \cite{BWZ24,BBexciting,backhoff2020martingale,2023arXiv230611019B,2023arXiv230607133G,GuLo21,huesmann2019benamou,Lo18,tan2013optimal}.

{The Schr\"{o}dinger problem has its origin in \cite{Sc31,Sc32}. In a nutshell, it asks for the most likely evolution for a large system of particles given initial and terminal configurations. By the theory of large deviation, this amounts to an entropy minimization problem. If the initial configuration is a point mass and the particles are Markovian, its solution is a Markov bridge \cite{LeRoZa14}. We refer to L\'eonard's survey \cite{Le14} for a historical account and to the more contemporary articles  \cite{Backhoff:2020aa,BeCaMaNe18,ChGePa16,Co19,PaWo18} for recent contributions. }

Finally we mention the work of Lacker \cite{lacker2020non}, extended e.g.\ in \cite{backhoff2020nonexponential,eckstein2019extended}, wherein the idea of considering a scaling limit of problems in ever higher dimension is also considered. This is more related to a large deviations principle for a particle system whose size goes to infinity, rather than to our framework of a single process that we examine at ever finer resolution.

\section{Specific Wasserstein divergence between martingales}\label{sec2}

\subsection{Specific Wasserstein divergence}

For Borel probability measures $\mu,\nu \in \pp_1(\mathcal X)$ on a metric space $(\mathcal X,d)$ we define their $1$-Wasserstein distance 
\begin{align*}
\W_1(\mu,\nu):=\inf_{\pi \in \Pi(\mu,\nu)} \int d(x,y) \, d\pi(x,y),
\end{align*}
whereby $\pp_1(\mathcal X):=\{\rho \in \pp(\mathcal X):\, \int d(x,x_0) \, d\rho(x) <\infty ,\text{ some }x_0\}$ and $\Pi(\mu,\nu)$ stands for the set of probability measures on $\mathcal X\times\mathcal X$ with first marginal $\mu$ and second marginal $\nu$. See \cite[Chapter 7]{Villani} for background. In case $\mathcal X$ is Euclidean space we will always take $d$ to be the metric associated to the Euclidean norm.

In order to define a time-consistent divergence on $\mathcal P_1(\mathbb R^N)$, with the aforementioned one-dimensional Wasserstein distance as a building block, we proceed inductively. We shall employ the following notation throughout:  If $x\in \mathbb R^N$, then $x_{i:j}:=(x_i,x_{i+1},\dots,x_j)$ for $i<j$ and $x_{i:i}=x_i$. Likewise if $\mathbb P\in \mathcal P(\mathbb R^N)$, then $\mathbb P_{i:j}$ is the law of $x_{i:j}$ under $\mathbb P$ and we denote $\mathbb P_{i}:=\mathbb P_{i:i}$. We write $\mathbb P^{x_{1:i-1}}_i$ for the conditional law of $x_i$ under $\mathbb P$ given the information of $x_{1:i-1}$, and use the convention $\mathbb P^{x_{1:0}}_1:=\mathbb P_1$.

We fix $p>0$ and define: 
{
\begin{definition}
\label{def:concatenated_D}
	Suppose $\mathbb Q,\mathbb P\in\mathcal P_1(\mathbb R)$, then $D_{\W}^{1,p}(\mathbb Q||\mathbb P):=\W_1^p(\mathbb Q,\mathbb P)$. For $N>1$, supposing that  $\mathbb Q,\mathbb P\in\mathcal P_1(\mathbb R^N)$ are such that  $P^{x_{1:i-1}}_i$ is well-defined $\mathbb{Q}_{1:i-1}$-a.s.\ for each $i \in\{1,\dots,N\}$, then we define inductively
	\[D_{\W}^{N,p}(\mathbb Q ||\mathbb P):= D_{\W}^{N-1,p}(\mathbb Q_{1:N-1} ||\mathbb P_{1:N-1})+\int \W_1^p(\mathbb Q^{x_{1:N-1}}_N , \mathbb P^{x_{1:N-1}}_N  )\, d\mathbb Q_{1:N-1}(x_{1:N-1}). \]
\end{definition}
}

Unraveling the induction, we clearly have the equivalent expression
\[D_{\W}^{N,p}(\mathbb Q ||\mathbb P) = \int_{\mathbb R^N}\left[\sum_{i=1}^{N} \W_1^p(\mathbb Q^{x_{1:i-1}}_i , \mathbb P^{x_{1:i-1}}_i  )  \right] d\mathbb Q(x).\]

\begin{remark}\label{rmk:2.2}
In the definition of $D_{\W}^{N,p}(\mathbb Q ||\mathbb P)$, the assumption that $\mathbb P^{x_{1:i-1}}_i$ is well-defined $\mathbb{Q}_{1:i-1}$-a.s.\ is necessary to integrate $\W_1^p(\mathbb Q^{x_{1:i-1}}_i , \mathbb P^{x_{1:i-1}}_i  ) $ with respect to $\mathbb{Q}_{1:i-1}$. One sufficient condition for this is that $\mathbb{Q}_{1:i-1} \ll \mathbb{P}_{1:i-1}$ for any $i=1,\dotso,N$. 

Another sufficient condition is when $\mathbb P$ is the law of a discrete-time Markov process which is uniquely defined by transition kernels $\{\mathbb P^{x_{i-1}}_i: \, i=1,\dotso, N, x_{i-1} \in \mathbb R \}$ which are defined everywhere. {Then we can set $D_{\W}^{N,p}(\mathbb Q ||\mathbb P) := \int_{\mathbb R^N}\left[\sum_{i=1}^{N} \W_1^p(\mathbb Q^{x_{1:i-1}}_i , \mathbb P^{x_{i-1}}_i  )  \right] d\mathbb Q(x)$, but one should not forget that this formula takes as an input not just $\mathbb P$ but potentially the whole of $\{\mathbb P^{x_{i-1}}_i: \, i=1,\dotso, N, x_{i-1} \in \mathbb R \}$. In particular, it could happen that $\mathbb P$ arises from two different everywhere-defined transition kernels, and this would give rise to two different expressions for $D_{\W}^{N,p}(\mathbb Q ||\mathbb P)$. Let us illustrate this in an importan particular case: Here $\mathbb P$ is the law of the process which is constant equal to zero. Then $\mathbb P^{x_{i-1}}_i = \delta_{0}$, or $\mathbb P^{x_{i-1}}_i = (1-\frac{x_{i-1}}{x_{i-1}+\text{sign}(x_{i-1})})\delta_{0} + \frac{x_{i-1}}{x_{i-1}+\text{sign}(x_{i-1})} \delta_{x_{i-1}+\text{sign}(x_{i-1})}$, or $\mathbb P^{x_{i-1}}_i = \delta_{x_{i-1}}$, are all legitimate choices and depending on the choice we would get a different value for $D_{\W}^{N,p}(\mathbb Q ||\mathbb P)$. For the purposes of this paper, we favor the latter option, and we write $\mathbb P_\delta$ whenever we mean the law of a constant martingale with the convention $\{(\mathbb P^\delta)^{x_{i-1}}_i=\delta_{x_{i-1}}: \, i=1,\dotso, N, x_{i-1} \in \mathbb R \}$. We have two main reasons for this choice. First: these are the limit of the transition kernels of the Gaussian martingale random walk $\mathbb G_\epsilon$ when we send its volatility parameter $\epsilon$ to zero. Second: as we show in Lemma~\ref{lem:valuefunctionlimit}, the correct continuous-time scaling limit of $D_{\W}^{N,p}(\mathbb Q || \mathbb G_\epsilon)$ goes  as $\epsilon\to 0$ in the sense of $\Gamma$-convergence to the correct continuous-time analogue of $D_{\W}^{N,p}(\mathbb Q ||\mathbb P_{\delta})$.  }
\end{remark}

\begin{remark}
For $p \geq 1$, thanks to the convexity of $\mathbb Q \mapsto \W_1(\mathbb Q, \mathbb P)$, it follows that $D^{N,p}_{\W}(\mathbb Q, \mathbb P)$ is also convex in $\mathbb Q$. To see this, take wlog.\ $N=2$ and two probability distributions $\mathbb{Q}, \tilde{\mathbb{Q}} \in \mathcal{P}_1(\mathbb R^2)$. Letting $t \in [0,1]$, it can be seen that $t \mathbb{Q}+(1-t) \tilde{\mathbb{Q}}$ has the disintegration 
\begin{align*}
\left( t \mathbb{Q}_1+ (1-t)\tilde{\mathbb{Q}}_1\right) \otimes \left( \frac{t \mathbb{Q}_2^{x_1}  \, d  \mathbb{Q}_1(x_1)+(1-t) \tilde{\mathbb{Q}}_2^{x_1}  \, d  \tilde{\mathbb{Q}}_1(x_1)}{t \, d  \mathbb{Q}_1(x_1)+(1-t) \, d  \tilde{\mathbb{Q}}_1(x_1)} \right),
\end{align*}
and hence  
\begin{align*}
&\int \W_1^p\left(\frac{t \mathbb{Q}_2^{x_1}  \, d  \mathbb{Q}_1(x_1)+(1-t) \tilde{\mathbb{Q}}_2^{x_1}  \, d  \tilde{\mathbb{Q}}_1(x_1)}{t \, d  \mathbb{Q}_1(x_1)+(1-t) \, d  \tilde{\mathbb{Q}}_1(x_1)}   , \mathbb P^{x_{1}}_2  \right)\, d( t \mathbb{Q}_1(x_1)+ (1-t)\tilde{\mathbb{Q}}_1(x_1)) \\
&\leq  \int \frac{t  \, d  \mathbb{Q}_1(x_1)}{t \, d  \mathbb{Q}_1(x_1)+(1-t) \, d  \tilde{\mathbb{Q}}_1(x_1)}  \W_1^p(\mathbb{Q}_2^{x_1}  || \mathbb P^{x_{1}}_2 )\, d( t \mathbb{Q}_1(x_1)+ (1-t)\tilde{\mathbb{Q}}_1(x_1)) \\
& \quad +\int \frac{(1-t)  \, d  \tilde{\mathbb{Q}}_1(x_1)}{t \, d  \mathbb{Q}_1(x_1)+(1-t) \, d  \tilde{\mathbb{Q}}_1(x_1)}  \W_1^p(\tilde{\mathbb{Q}}_2^{x_1}  || \mathbb P^{x_{1}}_2 )\, d( t \mathbb{Q}_1(x_1)+ (1-t)\tilde{\mathbb{Q}}_1(x_1)) \\
& = t \int   \W_p^p(\mathbb{Q}_2^{x_1}  || \mathbb P^{x_{1}}_2 )\, d\mathbb{Q}_1(x_1)+(1-t) \int   \W_1^p(\tilde{\mathbb{Q}}_2^{x_1}  || \mathbb P^{x_{1}}_2 )\, d\tilde{\mathbb{Q}}_1(x_1). 
\end{align*}

In the computation above, the crucial point is the convexity of $\W_1$, or more precisely, of $\W_1^p$. This is also the case for $\W_q^p$ with $1\leq q \leq p$, with $\W_q$ denoting the $q$-Wasserstein distance\footnote{Defined very much as $\W_1$, but with $d^q$ as the integrand and a power $1/q$ outside of the integral. }. In order to keep the convexity of $D^{N,p}_{\W}$ for any $p \geq 1$ we choose in this work $q=1$ throughout, but we could have easily considered $\W_q$ instead of $\W_1$.

\end{remark}

\begin{remark}\label{rmk:timeconsistency}
The construction in Definition \ref{def:concatenated_D} is the exact analogue to the Bellman principle for the conditional penalty functions of dynamic convex risk measures; see \cite[Theorem 4.5]{FoPe06}. An example of the latter is the additive decomposition of the relative entropy. More generally, if we are given a one-step risk measure $\rho_1$ with penalty function $\alpha_1$, so $ C_b(\mathbb R) \ni f\mapsto\rho_1(f):=\sup_{q\in \mathcal P(\mathbb R)}\{\mathbb \int fdq - \alpha_1(q)\}$, then by induction we can construct a risk measure $ C_b(\mathbb R^N) \ni F_N \mapsto\rho_N(F_N)$ via $\rho_N(F_N):=\rho_{N-1}((x_1,\dots,x_{N-1})\mapsto \rho_{1}(F_N(x_1,\dots,x_{N-1},\cdot)))$. Then the penalty function of $\rho_N$ is precisely\footnote{For illustration:  In the case of $N=2$ we have 
\begin{align*}\rho_2(F_2) %&\textstyle \sup_{q_1 \in \mathcal{P}(\mathbb R)} \left\{ \int \rho_1(F_2(x_1,\cdot))\, dq_1(x_1)- \alpha_1(q_1) \right\} \\
=& \textstyle \sup_{q_1 \in \mathcal{P}(\mathbb R)} \left\{ \int \sup_{q^{x_1} \in \mathcal{P}(\mathbb R)}  \left\{ \int F_2(x_1,x_2) \,dq^{x_1}(x_2)-\alpha_1(q^{x_1}) \right\}\, dq_1(x_1)- \alpha_1(q_1) \right\} \\
=&  \textstyle \sup_{\mathbb Q \in \mathcal{P}(\mathbb R^2)} \left\{ \int F_2 \, d \mathbb Q - \int \alpha_1(\mathbb Q^{x_1}) \, d \mathbb Q - \alpha_1(\mathbb Q_1)\right\}.
\end{align*}
%and hence $\alpha_2(\mathbb Q)= \int \alpha_1(\mathbb Q^{x_1}) \, d \mathbb Q - \alpha_1(\mathbb Q_1)$. By induction, should we have $\alpha_N(Q):=\int \sum_{i=1}^{N-1} \alpha_1(Q^{x_1,\dots,x_i})\,dQ+ \alpha_1(\mathbb Q_1)$?
} $\mathcal P(\mathbb R^N)\ni Q\mapsto \alpha_N(Q):=\int \sum_{i=0}^{N-1} \alpha_1(Q^{x_1,\dots,x_i})\,dQ$.  A formal application of the minimax theorem indicates  
\begin{align}
\label{eq:pre_lim}\inf \rho_N(F_N(X) - (H.X)_N) = \textstyle\sup \{\int F_NdQ - \alpha_N(Q)\},  
\end{align}
where the infimum runs over predictable processes $H$ (so $(H.X)_N$ is the discrete-time stochastic integral w.r.t.\ the identity process $X$), and the supremum runs over probability measures such that $X$ is a martingale. If we would divide \eqref{eq:pre_lim} by $N$ and take $F_N(x_1,\dots,x_N):= NG(N^{-1}\sum_{i=1}^N \delta_{x_i})$ for some $G\in C_b(\mathcal P(\mathbb R))$, then the limit in $N$ of these quantities has been studied by Lacker in \cite{lacker2020non}. On the other hand, if $\alpha_1$ is the relative entropy, we divide \eqref{eq:pre_lim} by $N$ and we take $F_N= N\,F\circ p_N$, with $F\in C([0,1])$ and $p_N$ the continuous linear interpolation on $N$ points, then the limit of \eqref{eq:pre_lim} has been studied already by Cohen and Dolinsky in \cite{CoDo22} under some extra assumptions. Observe that in this case \eqref{eq:pre_lim} has the interpretation of an exponential-utility indifference price under high (if $N$ is large) risk aversion. We consider the scaling limit of \eqref{eq:pre_lim} for our choice $\alpha_1(\cdot)=\pm \mathcal W_1(\cdot,\mathbb P)^p $ as an interesting open question which we do not tackle here. Our main result (Theorem \ref{thm:convergence}) is a first step in that direction, and it pins down the right type of scaling needed.
\end{remark}

In this article, we are interested in the divergence between the distributions of two continuous martingales taking real values. Hence we consider the classical Wiener space \[\Omega:= C([0,1]; \mathbb{R})\] equipped with  its natural Borel $\sigma$-algebra. {We will denote throughout by $X$ the canonical process
\[X(\omega)=\omega,\]
and by $\langle X \rangle$ its quadratic variation process. Since we will only be dealing with martingale laws, we do not need to refer to a reference measure in order to define the quadratic variation; see Remark \ref{rem:pathwise} for more details.

}

Inspired by \cite{gantert1991einige}, we define the specific Wasserstein divergence as a scaling limit of the finite dimensional discrepancy $D^{N,p}_{\W}$. For any $\mathbb Q \in \mathcal{P}(\Omega)$ and $N \in \mathbb N$, we denote by $$(\mathbb Q)_N \in \mathcal{P}(\mathbb R^N),$$ the law $\mathbb Q$ projected on the time-grid $\{k/N : k=1,\dots,N\}$.

\begin{definition}\label{defn:SW}
For any $\mathbb Q, \mathbb P \in \mathcal{P}(\Omega)$, we define the specific $p$-Wasserstein divergence as 
\[
\SW(\mathbb Q || \mathbb P):= \liminf\limits_{\substack{N=2^n \\ n\to\infty}} N^{p/2-1} D_{\W}^{N,p}((\mathbb Q)_N || (\mathbb P)_N ),
\]
if $D_{\W}^{N,p}((\mathbb Q)_N || (\mathbb P)_N )$ is well-defined for all large enough $N=2^n$. Otherwise we set it to $+\infty$. 
\end{definition}

As the following lemma shows, if $\mathbb W^{\sigma}_{x_0}$ denotes the law of Brownian motion started at $x_0$ with instantaneous variance / volatility $\sigma^2>0$, then we have
\[D_{\W}^{N,p}((\mathbb W^{\tilde\sigma}_{x_0})_N || (\mathbb W^{\sigma}_{x_0})_N)) = N^{-p/2+1} \W^p_1\left(\mathcal{N}(0,\tilde\sigma^2), \mathcal{N}(0,\sigma^2)\right)=N^{-p/2+1}(2/\pi)^{p/2}\left||\tilde\sigma|-|\sigma|\right|^p \]
and hence 
\[\SW(\mathbb W^{\tilde\sigma}_{x_0} || \mathbb W^{\sigma}_{x_0})=(2/\pi)^{p/2}\left||\tilde\sigma|-|\sigma|\right|^p . \]
Importantly, this suggests that the scaling factor $N^{-p/2+1}$ is the right one for our purposes.

We will need the following invariance property of the divergences $D^{N,p}_{\W}$:

\begin{lemma}
Taking $T: \mathbb{R}^{N+1} \to \mathbb{R}^{N+1}, \,  (x_1,x_2,\dotso,x_{N+1}) \mapsto (x_1,x_2-x_1,\dotso,x_{N+1}-x_{N})$ 
we have the following identity for any $\mathbb{Q}, \mathbb{P} \in \mathcal{P}(\mathbb{R}^{N+1})$ 
\begin{align*}
D^{N+1,p}_{\W}(\mathbb{Q} || \mathbb{P})=D^{N+1,p}_{\W}(T(\mathbb{Q}) || T(\mathbb{P})). 
\end{align*}
\end{lemma}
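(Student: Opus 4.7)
The plan is to prove the invariance by induction on $N$, leveraging the unravelled formula
\[
D_{\W}^{N+1,p}(\mathbb Q\,||\,\mathbb P) \;=\; \int_{\mathbb R^{N+1}} \sum_{i=1}^{N+1} \W_1^p\bigl(\mathbb Q^{x_{1:i-1}}_i,\,\mathbb P^{x_{1:i-1}}_i\bigr)\, d\mathbb Q(x),
\]
and exploiting three elementary facts about the map $T$: (i) $T$ is an affine bijection of $\mathbb R^{N+1}$ that fixes the first coordinate; (ii) for each $i$, the first $i$ components of $T(x)$ depend only on $x_{1:i}$, so that $T$ restricts to the analogous map $T_i:\mathbb R^{i}\to\mathbb R^i$; (iii) the inverse map $T^{-1}$ is given by the partial sums $x_j=y_1+\cdots+y_j$.

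First I would establish the following key identity for conditional laws: if $y=T(x)$, then
\[
(T\mathbb Q)^{y_{1:i-1}}_i \;=\; \tau_{-x_{i-1}} \,{}_{\#}\, \mathbb Q^{x_{1:i-1}}_i,
\]
where $x_{i-1}=y_1+\cdots+y_{i-1}$ and $\tau_a$ denotes translation by $a$. This holds because, conditionally on $x_{1:i-1}$ (equivalently, on $y_{1:i-1}$, since the correspondence is bijective), the $i$-th coordinate of $T(x)$ equals $x_i-x_{i-1}$, which is a deterministic shift of $x_i$. The same relation holds for $\mathbb P$ and thus ensures that the integrability/well-definedness hypothesis transfers between $(\mathbb Q,\mathbb P)$ and $(T\mathbb Q,T\mathbb P)$.

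Next I would invoke the translation invariance of $\W_1$, namely $\W_1(\tau_a\mu,\tau_a\nu)=\W_1(\mu,\nu)$, to conclude
\[
\W_1^p\bigl((T\mathbb Q)^{y_{1:i-1}}_i,(T\mathbb P)^{y_{1:i-1}}_i\bigr)\;=\;\W_1^p\bigl(\mathbb Q^{x_{1:i-1}}_i,\mathbb P^{x_{1:i-1}}_i\bigr),
\]
where $x=T^{-1}(y)$. The case $i=1$ is the trivial instance since $T$ fixes the first coordinate. Summing over $i$ and integrating the identity against $T\mathbb Q$, the change of variables $x=T^{-1}(y)$ (which has unit Jacobian and for which $T\mathbb Q$ is precisely the pushforward of $\mathbb Q$) converts the integral into the one defining $D^{N+1,p}_{\W}(\mathbb Q\,||\,\mathbb P)$.

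The argument is largely bookkeeping; the only mildly delicate point will be verifying carefully that the inductive definition of $D_\W^{N+1,p}$ is compatible with the pushforward structure, i.e.\ that $(T\mathbb Q)_{1:i-1}=(T_{i-1})_{\#}\mathbb Q_{1:i-1}$ so that the outer integrals match up after the change of variables. Alternatively, a clean way to organize the write-up is an induction on $N$: the base case $N=0$ is immediate since then $T$ reduces to the identity on $\mathbb R$, and the inductive step uses the recursive definition together with the translation-invariance identity for the last conditional Wasserstein term.
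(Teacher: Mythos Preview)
Your proposal is correct and follows essentially the same route as the paper: both identify that the conditional law of the $i$-th coordinate under $T\mathbb Q$ is a translate of $\mathbb Q^{x_{1:i-1}}_i$, invoke the translation invariance of $\W_1$, and then conclude by a change of variables together with induction (the paper does only the last step $i=N+1$ explicitly and appeals to induction, while you also sketch the direct summed version). The remark about ``unit Jacobian'' is harmless but unnecessary, since the change of variables here is simply the pushforward identity $\int f\,d(T\mathbb Q)=\int f\circ T\,d\mathbb Q$.
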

\begin{proof}
Note that $T$ can also be considered as a map from $\mathbb{R}^N$ to $\mathbb{R}^N$ when restricted to the first $N$-coordinates, and hence $T(x_{1:N})$ is defined as $(x_1,x_2-x_1,\dotso, x_N-x_{N-1})$. Since $T$ is a bijection, it can be seen that 
\begin{align*}
T(\mathbb{Q})_{N+1}^{T(x_{1:N})}=(x_{N+1} \mapsto x_{N+1}+x_N)_{\#} \mathbb{Q}^{x_{1:N}}_{N+1},
\end{align*}
and therefore due to the translation invariant property of Wasserstein distance, $$\W_1^p(T(\mathbb{Q})^{T(x_{1:N})}_{N+1} ,T(\mathbb{P})^{T(x_{1:N})}_{N+1} )= \W_1^p(\mathbb{Q}^{x_{1:N}}_{N+1} ,\mathbb{P}^{x_{1:N}}_{N+1} ).$$
Now by change of measure, 
\begin{align*}
\int \W_1^p(T(\mathbb Q)^{x_{1:N}}_{N+1} , T(\mathbb P)^{x_{1:N}}_{N+1}  )\, dT(\mathbb Q)_{1:N}(x_{1:N})&= \int \W_1^p(T(\mathbb Q)^{T(x_{1:N})}_{N+1} , T(\mathbb P)^{T(x_{1:N})}_{N+1}  )\, d\mathbb Q_{1:N}(x_{1:N}) \\
&=\int \W_1^p(\mathbb Q^{x_{1:N}}_{N+1} , \mathbb P^{x_{1:N}}_{N+1}  )\, d\mathbb Q_{1:N}(x_{1:N}),
\end{align*}
and our claim follows by induction. 
\end{proof}

{

We will now fix a particular choice of martingale measure, $\mathbb P$, playing the role of a \emph{reference measure}. The reader can think of $\mathbb P$ as the law of Brownian motion, however the precise assumption that we need is as follows:

\begin{assumptions}\label{ass:P}
	Admit the existence of a jointly measurable function $\eta:[0,1]\times \mathbb R\to \mathbb R$, such that $x \mapsto \eta(t,x)$ is Lipschitz uniformly in $t$ and $\sup_{t \in [0,1 ]} |\eta(t,0)| < +\infty$.  We denote by $\mathbb P^{x_0}$ the law of the solution of the SDE $dX_t = \eta(t,X_t)dB_t$ starting from $x_0$, and if $x_0$ is fixed from the context we simply write $\mathbb P$.
\end{assumptions}

Under Assumption~\ref{ass:P}, the conditional law  $((\mathbb P)_N)_{i}^{x_{1:i-1}}$ is simply the distribution of $X_{i/N}$ where \[dX_t=\eta(t,X_t)\,dB_t, \quad X_{(i-1)/N}=x_{i-1}.\] In this case, $((\mathbb P)_N)_{i}^{x_{1:i-1}}$ is well-defined for any $x_{1:i-1} \in \mathbb{R}^{i-1}$, and for any $\mathbb Q \in \mathcal{P}_1(\Omega)$ the divergence $D_{\W}^{N,p}( (\mathbb Q)_N || (\mathbb P)_N)$ is well-defined {as discussed in Remark~\ref{rmk:2.2}. Throughout the paper this is the choice of transition kernels that we use, and we will not stress that the quantity $D_{\W}^{N,p}( (\mathbb Q)_N || (\mathbb P)_N)$ depends on this choice.}

\begin{definition}
	\label{def:mart}
	We denote by $\mathcal M^c([0,1])$ the set of continuous martingale laws with an absolutely continuous quadratic variation. The density of the quadratic variation will be denoted by $\sigma^2(t,X)$.
\end{definition}

Inspired by the developments on the particular case of the specific relative entropy \cite{gantert1991einige,MR4651162}, we have our first main result: }

\begin{theorem}\label{thm:convergence}
	Suppose $\mathbb Q \in \mathcal M^c([0,1])$ with $\sigma \in L^{p \vee 2}(\lambda \times \mathbb{Q})$, where $\lambda$ stands for the Lebesgue measure on $[0,1]$. Suppose that $\mathbb P$ satisfies Assumption~\ref{ass:P} with bounded volatility coefficient $\eta$. Then the limit inferior in the definition of $\SW(\mathbb Q || \mathbb P)$ is an actual limit, and we have 
	\[\label{eq:SW=formula}\SW(\mathbb Q || \mathbb P)=(2/\pi)^{p/2}\mathbb E_{\mathbb Q}\left [\int_0^1 \left |\, |\sigma(t,X)|-|\eta(t,X_t)|\,\right|^{p} \, dt \right ].\]
If both $\mathbb{P},\mathbb{Q}$ are time-homogeneous Markov, with Lipschitz and uniformly positive bounded volatility (i.e., $x \mapsto (\sigma(x), \eta(x))$ is Lipschitz and there exists a $\delta >0$ with $\sigma(x),\eta(x) \in (\delta, 1/\delta)$ for all $x\in \mathbb{R}$), then we have the $\mathbb Q$-a.s.\ limit
\begin{align}\label{eq:almostsure}
	\lim_{N=2^n} &N^{p/2-1} \sum_{i=1}^N \W_1^p\Big (\mathcal L_{\mathbb Q}\big(X_{i/N}|\{X_{k/N}\}_{k=1}^{i-1} \big ) , \mathcal L_{\mathbb P}\big(X_{i/N}|\{X_{k/N}\}_{k=1}^{i-1} \big) \Big ) \\ \notag
	 &=(2/\pi)^{p/2} \int_0^1 \left||\sigma(X_t)|-|\eta(X_t)|\right|^p \, dt .
	\end{align}
	
	\end{theorem}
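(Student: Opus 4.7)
The plan rests on the explicit one-dimensional formula
\[
\W_1(\mathcal N(0,a^2),\mathcal N(0,b^2))=\sqrt{2/\pi}\,\bigl||a|-|b|\bigr|,
\]
which accounts for the constant $(2/\pi)^{p/2}$ appearing in the target, together with the heuristic that over a short interval $[(i-1)/N,i/N]$ the conditional law of a continuous martingale's increment is close to a centered Gaussian whose variance equals the conditional quadratic-variation increment. Once these two pieces are combined, each one-step $\W_1^p$ contributes a term of order $N^{-p/2}||a_i|-|b_i||^p$ with $a_i,b_i$ the local standard deviations under $\mathbb Q$ and $\mathbb P$, so that rescaling by $N^{p/2-1}$ produces a Riemann sum converging to the advertised integral.

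To make this precise, I would first use the translation-invariance lemma above to recenter both conditional laws $((\mathbb Q)_N)^{x_{1:i-1}}_i$ and $((\mathbb P)_N)^{x_{1:i-1}}_i$ at zero (their common martingale mean $x_{i-1}$), and then introduce Gaussian surrogates $\hat{\mathbb Q}^i:=\mathcal N(0,v_Q^i/N)$ and $\hat{\mathbb P}^i:=\mathcal N(0,v_P^i/N)$ with
\[
v_Q^i:=N\,\mathbb E_{\mathbb Q}\!\left[\int_{(i-1)/N}^{i/N}\!\sigma^2(s,X)\,ds\,\Big|\,\F_{(i-1)/N}\right],\qquad v_P^i:=\eta(t_{i-1},x_{i-1})^2.
\]
The closed-form identity then yields
\[
N^{p/2-1}\sum_{i=1}^N \W_1^p(\hat{\mathbb Q}^i,\hat{\mathbb P}^i)=(2/\pi)^{p/2}\frac{1}{N}\sum_{i=1}^N\bigl|\sqrt{v_Q^i}-|\eta(t_{i-1},x_{i-1})|\bigr|^p,
\]
and Lebesgue differentiation of $s\mapsto\int_0^s\sigma^2(r,X)\,dr$ together with a standard Riemann-sum/dominated-convergence argument (using $\sigma\in L^{p\vee 2}$ and bounded $\eta$ as dominating envelope) identifies its $\mathbb Q$-expectation in the limit with $(2/\pi)^{p/2}\mathbb E_{\mathbb Q}[\int_0^1 (|\sigma(t,X)|-|\eta(t,X_t)|)^p\,dt]$.

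The technical heart of the proof is to show that substituting the true conditional laws by their Gaussian surrogates is harmless, i.e.\
\[
N^{p/2-1}\sum_{i=1}^N\mathbb E_{\mathbb Q}\bigl|\W_1^p(\mathbb Q^i,\mathbb P^i)-\W_1^p(\hat{\mathbb Q}^i,\hat{\mathbb P}^i)\bigr|\;\longrightarrow\;0.
\]
For the $\mathbb P$-side, standard small-time expansions for Lipschitz diffusions give $\W_1(\mathbb P^i,\hat{\mathbb P}^i)=O(N^{-1})=o(N^{-1/2})$ and hence a vanishing contribution. For the $\mathbb Q$-side, the Dambis--Dubins--Schwarz theorem represents $\mathbb Q^i$ as a mixture of centered Gaussians indexed by the random quadratic-variation increment $\int_{(i-1)/N}^{i/N}\sigma^2(s,X)\,ds$; convexity of $\W_1$ together with the explicit Gaussian formula reduces the estimate to controlling $L^1$-oscillations of $\sigma^2$ on dyadic intervals, which vanish by Lebesgue differentiation. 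The elementary inequality $||a|^p-|b|^p|\le p\max(|a|,|b|)^{p-1}|a-b|$ for $p\ge 1$ (respectively sub-additivity $||a|^p-|b|^p|\le |a-b|^p$ for $p<1$) converts $\W_1$-estimates into $\W_1^p$-estimates. The main obstacle I expect is keeping these error bounds uniform when $\sigma$ is only integrable to power $p\vee 2$ rather than bounded; I would handle this by truncating $\sigma$ at a level $M$, applying the bounded analysis on $\{\sigma\le M\}$, and controlling the tail $\{\sigma>M\}$ by direct moment estimates before sending $M\to\infty$.

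For the almost-sure statement under the additional Lipschitz and uniform-ellipticity hypotheses, both $t\mapsto\sigma(X_t)$ and $t\mapsto\eta(X_t)$ are continuous along $\mathbb Q$-paths, upgrading the preceding Riemann-sum step to pointwise convergence along the dyadic meshes $N=2^n$. Moreover, in the bounded Lipschitz regime the Gaussian-approximation errors admit quantitative Berry--Esseen-type controls, yielding $p$-th-moment sums that are summable in $n$; the Borel--Cantelli lemma then promotes the in-expectation convergence to the $\mathbb Q$-almost-sure statement \eqref{eq:almostsure}.
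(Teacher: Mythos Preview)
Your overall architecture---replace the true one-step conditional laws by Gaussian surrogates with matched conditional variance, compute the surrogate sum exactly via the formula $\W_1(\mathcal N(0,a^2),\mathcal N(0,b^2))=\sqrt{2/\pi}\,||a|-|b||$, and show the substitution error vanishes after rescaling---is exactly the route the paper takes. The $\mathbb P$-side estimate and the Borel--Cantelli scheme for the almost-sure part are also in line with the paper.

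There is, however, a genuine gap in your $\mathbb Q$-side error control. You write that Dambis--Dubins--Schwarz ``represents $\mathbb Q^i$ as a mixture of centered Gaussians indexed by the random quadratic-variation increment'' and then invoke convexity of $\W_1$. This is not correct: under DDS the time-changed Brownian motion and the random clock $\int_{(i-1)/N}^{i/N}\sigma^2\,ds$ are \emph{not} independent, so the conditional law of the increment is not a mixture of Gaussians in the sense needed for the convexity bound. The paper avoids this by a different mechanism: it invokes the martingale representation theorem to write, on an enlarged space, $dX_t=|\sigma(t,X)|\,dB_t$ with a single Brownian motion $B$, and then couples $\mathbb Q^i$ to its Gaussian surrogate $\tilde{\mathbb Q}^i$ via the pair
\[
\Bigl(\int_{(i-1)/N}^{i/N}|\sigma(s,X)|\,dB_s,\ |\sigma_N|\,(B_{i/N}-B_{(i-1)/N})\Bigr)
\]
conditioned on the discrete past. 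This is an honest coupling, so $\W_1^p(\mathbb Q^i,\tilde{\mathbb Q}^i)$ is bounded by the $p$-th moment of the difference, and BDG plus Jensen turn this into $C\,N^{1-p/2}\,\mathbb E\!\int_{(i-1)/N}^{i/N}(|\sigma|-|\sigma_N|)^p\,ds$ for $p\ge 2$ (with the obvious concavity variant for $p<2$). Summing and using $L^{p/2}$-martingale convergence of $\sigma_N^2\to\sigma^2$ kills the error without any truncation step.

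Two smaller points. First, your surrogate variance $v_Q^i$ conditions on the full filtration $\F_{(i-1)/N}$; the object you must match is the variance of $((\mathbb Q)_N)^{x_{1:i-1}}_i$, i.e.\ conditioning on the \emph{discrete} history only---the paper's $\sigma_N^2=\mathbb E^{\lambda\times\mathbb Q}[\sigma^2\,|\,\mathcal P_N]$ is built for exactly this. Second, for the convergence of the surrogate Riemann sum the paper does not use Lebesgue differentiation plus truncation but the cleaner fact that $(\sigma_N^2)_N$ is a closed martingale, giving $|\sigma_N|\to|\sigma|$ in $L^{p\vee 2}$ directly.
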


\begin{remark}\label{rem:pathwise}
Thanks to \cite{Ka95}, there exists an adapted increasing stochastic process which is a.s.\ equal to the quadratic variation of $X$ under any martingale measure. With some abuse of notation we still denote by $\langle X\rangle  $ this process. Then we have 
\[\mathbb{Q} \mapsto \SW(\mathbb Q || \mathbb P)= (2/\pi)^{p/2}\mathbb E_{\mathbb Q}\left [\int_0^1 \left||\sqrt{d \langle X \rangle_t/ dt}|-|\eta(t,X)|\right|^{p} \, dt \right ] \]
is linear on the space of martingale measures considered in Theorem \ref{thm:convergence}. {Hence the specific Wasserstein divergence is linear even though it is the limit of non-linear objects. The same happens when one concatenates relative entropies to obtain the specific relative entropy. One should think of these linear objects as something more akin to linear transport costs than classical divergences.}
\end{remark}

\begin{remark}\label{rem:ex_reference}
	Taking $\eta$ to be a non-negative constant, say $\eta=\bar \sigma\geq 0$, Theorem \ref{thm:convergence} says
	\[\SW(\mathbb Q || \mathbb P)=(2/\pi)^{p/2}\mathbb E_{\mathbb Q}\left [\int_0^1 \left||\sigma(t,X)|-\bar\sigma\right|^{p} \, dt \right ],\]
	as promised in the introduction. Particularly natural are the choices $\bar\sigma=1$, corresponding to standard Brownian motion, and $\bar\sigma=0$, corresponding to the constant martingale.

\end{remark}

{
\begin{remark}
	If we had taken $\W_q^p$ instead of $\W_1^p$ in Definition \ref{def:concatenated_D}, then Theorem \ref{thm:convergence}  would remain true but in the r.h.s.\ of \ref{eq:SW=formula} we would get the factor $\E[|Z|^q]^{p/q}$ with $\text{Law}(Z)=\mathcal{N}(0,1)$, instead of $(2/\pi)^{p/2}$ . 
\end{remark}
}
    
\begin{proof}[Proof of Theorem \ref{thm:convergence}]
\emph{Step 1:} \quad Let us first consider the case $p \geq 2$.  With some abuse of notation, we denote by $\mathbb{Q}_k^{x_{1:k-1}}$ the conditional distribution of $k$-th marginal of $(\mathbb{Q})_N$ given the first $(k-1)$ coordinates. Then we obtain that 
\begin{align*}
D^{N,p}_{\W}((\mathbb{Q})_N || (\mathbb{P})_N)= \int_{\mathbb{R}^N} \sum_{i=1}^N\W_1^p(\mathbb{Q}_i^{x_{1:i-1}},\mathbb{P}_i^{x_{1:i-1}}) \, d (\mathbb{Q})_N(x). 
\end{align*}
Taking $\sigma_N^2=\mathbb{E}^{\lambda \times \mathbb{Q}}[\sigma^2 \, |\, \mathcal{P}_N]$, where $\mathcal{P}_N:= \sigma \left(\{(s,t] \times A : \, s<t \in \{1/N,\dotso,1\}, A \in \sigma(X_{1:s}) \}\right)$, it can be seen that 
\begin{align*}
\sigma_N^2((i-1)/N, x)&=N\mathbb{E}^{\mathbb Q} [|X_{i/N}-X_{(i-1)/N}|^2 \, | \, X_{1:i-1}=x_{1:i-1}] \\
&=N\mathbb{E}^{\mathbb{Q}}\left[\int_{(i-1)/N}^{i/N} \sigma(s,X)^2 \, ds \, | \, X_{1:i-1}=x_{1:i-1} \right],
tr\end{align*}
and $\sigma_N^2((i-1)/N,x)$ is only dependent on $x_{1:i-1}$. Also we take $\eta_N(t,x):=\eta(  (i-1)/N ,x_{i-1}  )$ for $t \in ((i-1)/N,i/N]$. 
Then by martingale convergence theorem and the continuity of $\eta$,  $\lim_{N=2^n} (\sigma_N,\eta_N)=(\sigma,\eta)$ $\lambda \times \mathbb{Q}$-a.s. 

We approximate $\mathbb{Q}_i^{x_{1:i-1}}$ and $\mathbb{P}_i^{x_{1:i-1}}$ by Gaussian distributions $\tilde{\mathbb{Q}}_{i}^{x_{1:i-1}}$ and $\tilde{\mathbb{P}}_{i}^{x_{1:i-1}}$,
\begin{align*}
\tilde{\mathbb{Q}}_{i}^{x_{1:i-1}} &= \mathcal{N}\left(x_{i-1},\frac{\sigma^2_N((i-1)/N, x)}{N}  \right),  \\
\tilde{\mathbb{P}}_i^{x_{1:i-1}} &=\mathcal{N}\left(x_{i-1}, \frac{\eta_N^2((i-1)/N,x)}{N} \right),
\end{align*}
and then $\W_1^p(\tilde{\mathbb{Q}}_{i}^{x_{1:i-1}},\tilde{\mathbb{P}}_{i}^{x_{1:i-1}})=\left( \frac{2}{\pi N}\right)^{p/2} \left| |\sigma_N((i-1)/N, x)|-|\eta_N((i-1)/N,x)| \right|^p$.
Supposing that 
\begin{align}\label{eq:normalapprox}
&\lim_{N=2^n} N^{p/2-1}\int_{\mathbb{R}^N} \sum_{i=1}^N\W_1^p(\mathbb{Q}_i^{x_{1:i-1}},\mathbb{P}_i^{x_{1:i-1}}) \, d (\mathbb{Q})_N(x) \notag \\
&=\lim_{N=2^n} N^{p/2-1}\int_{\mathbb{R}^N} \sum_{i=1}^N\W_1^p(\tilde{\mathbb{Q}}_i^{x_{1:i-1}},\tilde{\mathbb{P}}_i^{x_{1:i-1}}) \, d (\mathbb{Q})_N(x),
\end{align}
it can be seen that
\begin{align*}
\SW (\mathbb{Q} || \mathbb{P})=&\lim_{N=2^n} N^{p/2-1}\int_{\mathbb{R}^N} \sum_{i=1}^N\W_1^p(\tilde{\mathbb{Q}}_i^{x_{1:i-1}},\tilde{\mathbb{P}}_i^{x_{1:i-1}}) \, d (\mathbb{Q})_N(x) \\
&= \lim_{N=2^n} \, \left(\frac{2}{\pi}\right)^{p/2}\mathbb{E}^{\mathbb{Q}}\left[\frac{1}{N}\sum_{i=1}^N \left| |\sigma_N((i-1)/N, X)|-|\eta((i-1)/N,X)| \right|^p\right]   \\
&= (2/\pi)^{p/2}\mathbb E_{\mathbb Q}\left [\int_0^1 \left||\sigma(t,X)|-|\eta(t,X_t)|\right|^{p} \, dt \right ].
\end{align*}
The last equality follows from $L^p$ martingale convergence, the dominated convergence theorem and the fact that $(|\sigma_N|,\eta_N) \to (|\sigma|,\eta)$, $\lambda \times \mathbb{Q}$-a.s.

\vspace{4pt}

\emph{Step 2:} \quad It remains to verify \eqref{eq:normalapprox}, and we claim that
\begin{align}\label{eq:Qconverge}
\lim\limits_{N=2^n}N^{p/2-1} \int \sum_{i=1}^N \W_1^p(\mathbb{Q}_i^{x_{1:i-1}}, \tilde{\mathbb{Q}}_i^{x_{1:i-1}}) \, d(\mathbb{Q})_N(x)=0.
\end{align}
Thanks to the martingale representation theorem, on an extended filtered probability space $(\bar \Omega, (\bar{\mathcal{F}}_t)_{t \in [0,1]}, \bar{\mathbb{Q}})$, there exists a Brownian motion $(B_t)$ and an adapted process $\bar \sigma$ such that $d X_t= \bar \sigma \, d B_t$ and $\bar{\sigma}(s, \bar\omega)=|\sigma(s, X(\bar \omega))| \geq 0$, $\bar{\mathbb{Q}}$-$a.s.$  Now $\mathbb{Q}_i^{x_{1:i-1}}$ is the law $$\mathcal{L}\left(X_{(i-1)/N}+\int_{(i-1)/N}^{i/N} |\sigma(s,X)| \, dB_s \, | \, X_{1/N:(i-1)/N}=x_{1:i-1}\right)$$ while $\tilde{\mathbb{Q}}_i^{x_{1:i-1}}$ can be represented by the distribution $$\mathcal{L}\left(X_{(i-1)/N}+|\sigma_N((i-1)/N,X)|(B_{i/N}-B_{(i-1)/N}) \, | \, X_{1/N:(i-1)/N}=x_{1:i-1} \right).$$ Therefore 
\begin{align}\label{eq:coupling}
\mathcal{L} \left( \left(x_{i-1}+\int_{(i-1)/N}^{i/N} |\sigma(s,X)| \, dB_s \,,\, x_{i-1}+|\sigma_N((i-1)/N,X)|(B_{i/N}-B_{(i-1)/N}) \right) \, \big| \,  X_{1/N:(i-1)/N}=x_{1:i-1} \right) 
\end{align}
provides a natural coupling between $\mathbb{Q}_i^{x_{1:i-1}}$ and $\tilde{\mathbb{Q}}_i^{x_{1:i-1}}$, and hence using $\W_1^p \leq \W_p^p$ we get the upper bound
\begin{align*}
\W_1^p(\mathbb{Q}_i^{x_{1:i-1}}, \tilde{\mathbb{Q}}_i^{x_{1:i-1}}) &\leq \E^{\bar{\mathbb{Q}}}\left[ \left| \int_{(i-1)/N}^{i/N} |\sigma(s,X)|-|\sigma_N((i-1)/N,X)| \, dB_s \right|^p \, \big| \,X_{1/N:(i-1)/N}=x_{1:i-1}   \right]. 
\end{align*}
Integrating the above inequality over $\mathbb{Q}$,  one gets that
\begin{align*}
\int  \W_1^p(\mathbb{Q}_i^{x_{1:i-1}}, \tilde{\mathbb{Q}}_i^{x_{1:i-1}}) \, d(\mathbb{Q})_N(x) &\leq C\E^{\bar{\mathbb{Q}}} \left[ \left| \int_{(i-1)/N}^{i/N} |\sigma(s,X)|-|\sigma_N((i-1)/N,X)| \, dB_s \right|^p \right] \\
&\leq C \E^{\bar{\mathbb{Q}}}\left[\left|\int_{(i-1)/N}^{i/N} (|\sigma(s,X)|-|\sigma_N((i-1)/N,X)|)^2 \, ds\right|^{p/2}  \right] \\
& \leq \frac{C}{N^{p/2-1}} \E^{\bar{\mathbb{Q}}}\left[\int_{(i-1)/N}^{i/N} \left||\sigma(s,X)|-|\sigma_N((i-1)/N,X)|\right|^p \, ds\  \right],
\end{align*}
where we use BDG and Jensen's inequalities. Therefore, we obtain that 
\begin{align}\label{eq:upperbound1}
N^{p/2-1}\int \sum_{i=1}^N \W_1^p(\mathbb{Q}_i^{x_{1:i-1}}, \tilde{\mathbb{Q}}_i^{x_{1:i-1}}) \, d(\mathbb{Q})_N(x) \leq C\E^{\mathbb{Q}} \left[\int_0^1 \left||\sigma|-|\sigma_N|\right|^p \, ds \right].
\end{align}
Due to the $L^{p/2}$ martingale convergence theorem, $\sigma_N^2 \to \sigma^2$ with respect to $L^{p/2}$ norm, and hence $\sigma^p_N$ is uniformly integrable. Therefore, it can be easily seen that $|\sigma_N| \to |\sigma|$ in $L^p$. As a result, the right hand side converges to $0$ as $N \to \infty$, and thus we verify \eqref{eq:Qconverge}.

Similarly, according to BDG and Jensen's inequalities, it can be seen that 
\begin{align}\label{eq:momentbound}
N^{p/2-1}\int \sum_{i=1}^N \W_1^p(\mathbb{Q}_i^{x_{1:i-1}}, \mathbb{P}_i^{x_{1:i-1}}) \, d(\mathbb{Q})_N(x) \leq C\left(1+ \E^{\mathbb{Q}}\left[\int_0^1 |\sigma(s,X)|^p \, ds \right] \right), 
\end{align}
where $C$ is a constant depending on $\lVert \eta \rVert_{\infty}$. In the same way, $$N^{p/2-1}\int \sum_{i=1}^N \W_1^p(\tilde{\mathbb{Q}}_i^{x_{1:i-1}},\mathbb{P}_i^{x_{1:i-1}}) \, d(\mathbb{Q})_N(x)\leq C\left(1+ \E^{\mathbb{Q}}\left[\int_0^1 |\sigma_N(s,X)|^p \, ds \right] \right).$$ 
Therefore, applying the inequality $|a^p-b^p| \leq C|a-b|(|a|^{p-1}+|b|^{p-1})$ for $a,b \in \mathbb{R}$, we get that  
\begin{align}\label{eq:upperbound2}
&\int_{\mathbb{R}^N} \sum_{i=1}^N \left(\W_1^p(\tilde{\mathbb{Q}}_i^{x_{1:i-1}},\mathbb{P}_i^{x_{1:i-1}})-\W_1^p(\mathbb{Q}_i^{x_{1:i-1}},\mathbb{P}_i^{x_{1:i-1}}) \right)\, d (\mathbb{Q})_N(x) \\
& \leq C \int_{\mathbb{R}^N} \sum_{i=1}^N \W_1(\mathbb{Q}_i^{x_{1:i-1}}, \tilde{\mathbb{Q}}_i^{x_{1:i-1}}) \W_1^{p-1}(\mathbb{Q}_i^{x_{1:i-1}}, \mathbb{P}_i^{x_{1:i-1}})\, d (\mathbb{Q})_N(x) \notag \\
& \quad + C \int_{\mathbb{R}^N} \sum_{i=1}^N \W_1(\mathbb{Q}_i^{x_{1:i-1}}, \tilde{\mathbb{Q}}_i^{x_{1:i-1}}) \W_1^{p-1}(\tilde{\mathbb{Q}}_i^{x_{1:i-1}}, \mathbb{P}_i^{x_{1:i-1}})\, d (\mathbb{Q})_N(x).  \notag
\end{align}
Thanks to H\"{o}lder's inequality, the first term on the right is bounded by 
\begin{align*}
C \left| \int_{\mathbb{R}^N} \sum_{i=1}^N \W^p_1(\mathbb{Q}_i^{x_{1:i-1}}, \tilde{\mathbb{Q}}_i^{x_{1:i-1}})\, d (\mathbb{Q})_N(x) \right|^{1/p} \left| \int_{\mathbb{R}^N} \sum_{i=1}^N \W^p_1(\mathbb{Q}_i^{x_{1:i-1}}, \mathbb{P}_i^{x_{1:i-1}})\, d (\mathbb{Q})_N(x) \right|^{(p-1)/p}. 
\end{align*}
We have a similar estimate for the second term on the right, and hence due to \eqref{eq:Qconverge} and \eqref{eq:momentbound}, we conclude that 
\begin{align*}
\lim\limits_{N=2^n} N^{p/2-1}\int_{\mathbb{R}^N} \sum_{i=1}^N \left(\W_1^p(\tilde{\mathbb{Q}}_i^{x_{1:i-1}},\mathbb{P}_i^{x_{1:i-1}})-\W_1^p(\mathbb{Q}_i^{x_{1:i-1}},\mathbb{P}_i^{x_{1:i-1}}) \right)\, d (\mathbb{Q})_N(x)=0. 
\end{align*}
By the same reasoning, one can show that 
\begin{align*}
\lim\limits_{N=2^n} N^{p/2-1}\int_{\mathbb{R}^N} \sum_{i=1}^N \left(\W_1^p(\tilde{\mathbb{Q}}_i^{x_{1:i-1}},\mathbb{P}_i^{x_{1:i-1}})-\W_1^p(\tilde{\mathbb{Q}}_i^{x_{1:i-1}},\tilde{\mathbb{P}}_i^{x_{1:i-1}}) \right)\, d (\mathbb{Q})_N(x)=0,
\end{align*}
and thus we verify \eqref{eq:normalapprox}.

\vspace{4pt}

\emph{Step 3:} \quad Now let us prove the result for $p \in (0,2)$. Thanks to the same coupling as in \eqref{eq:coupling}, we get that 
\begin{align*}
&\int  \W_1^p(\mathbb{Q}_i^{x_{1:i-1}}, \tilde{\mathbb{Q}}_i^{x_{1:i-1}}) \, d(\mathbb{Q})_N(x)   \\
&\leq \int \E^{\mathbb{Q}}\left[\left(\int_{(i-1)/N}^{i/N} (|\sigma(s,X)|-|\sigma_N((i-1)/N,X)|)^2 \, ds\right)\, \big| \,X_{1/N:(i-1)/N}=x_{1:i-1}     \right]^{p/2}d(\mathbb{Q})_N(x)\\
& \leq \left(\int_{(i-1)/N}^{i/N} \E^{\mathbb{Q}}\left[(|\sigma(s,X)|-|\sigma_N((i-1)/N,X)|)^2 \right] ds \right)^{p/2} 
\end{align*}
where in the last inequality we use the concavity of $x \mapsto x^{p/2}$ over $\mathbb{R}_+$. 
Summing the above inequality over $i=1,\dotso, N$, and making use of $$\sum_{i=1}^N a_i^{p/2} b_i^{(2-p)/2} \leq \left(\sum_{i=1}^N a_i\right)^{p/2} \left(\sum_{i=1}^N b_i\right)^{(2-p)/2},$$
we obtain that 
\begin{align*}
N^{p/2-1}\int  \sum_{i=1}^N \W_1^p(\mathbb{Q}_i^{x_{1:i-1}}, \tilde{\mathbb{Q}}_i^{x_{1:i-1}}) \, d(\mathbb{Q})_N(x) \leq \left(\E^{\mathbb{Q}}\left[ \int_0^1 \left||\sigma|-|\sigma_N|\right|^2  ds \right]\right)^{p/2},
\end{align*}
where the right hand side converges to $0$ since $\sigma \in L^2$. Then by the same argument as in the case $p \geq 2$, we conclude the result.

\vspace{4pt}

\emph{Step 4:} \quad Finally, we prove \eqref{eq:almostsure} for the case $p>2$ and the argument for $p \in (0,2]$ is the same. It is sufficient to estimate $\E^{\mathbb{Q}} \left[\int_0^1 ||\sigma|-|\sigma_N||^p \, ds \right]$ and apply Borel-Cantelli. Without loss of generality, we assume both $\sigma$ and $\sigma_N$ are nonnegative. For each $i =1,\dotso, N$ and $s \in ((i-1)/N,i/N]$, we have 
\begin{align*}
\left|\sigma_N((i-1)/N,X)-\sigma(X_s)\right|^p \leq & 2^{p-1} \left|\sigma_N((i-1)/N,X)-\sigma(X_{(i-1)/N}) \right|^p+  2^{p-1} \left|\sigma(X_{(i-1)/N})-\sigma(X_s) \right|^p \\
\leq & C \left|\sigma^2_N((i-1)/N,X)-\sigma^2(X_{(i-1)/N}) \right|^p+  2^{p-1}\left|\sigma(X_{(i-1)/N})-\sigma(X_s) \right|^p,
\end{align*}
where in the last inequality we use the fact that $\sigma,\sigma_N >1/\delta$. For the first term on the right, it follows from the definition of $\sigma_N^2$ that 
\begin{align*}
&C \left|\sigma^2_N((i-1)/N,X)-\sigma^2(X_{(i-1)/N}) \right|^p  \leq CN^p \left|\E^{\mathbb{Q}}\left[\int_{(i-1)/N}^{i/N}\sigma^2(X_t)-\sigma^2(X_{(i-1)/N})  \,dt  \, \Big| \, X_{(i-1)/N}\right] \right|^p \\
& \leq CN^p\left| \E^{\mathbb Q}\left[\int_{(i-1)/N}^{i/N} |X_t-X_{(i-1)/N}|\,dt \, \Big| \, X_{(i-1)/N} \right]\right|^p \leq CN^p \left|\int_{(i-1)/N}^{i/N} \E^{\mathbb Q}[|X_t-X_{(i-1)/N}|  \, | \, X_{(i-1)/N}] \,dt \right|^p \\
& \leq CN^p \left|\int_{(i-1)/N}^{i/N} \sqrt{t-(i-1)/N}\,dt \right|^p\leq\frac{C}{N^{p/2}},
\end{align*}
where we use boundedness and Lipschitz property of $\sigma$. Together with the inequality 
\begin{align*}
\E^{\mathbb Q} \left[ \left|\sigma(X_{(i-1)/N})-\sigma(X_t) \right|^p\right] \leq \E^{\mathbb Q} \left[ |X_t-X_{(i-1)/N}|^p\right]\leq (t-(i-1)/N)^{p/2},
\end{align*}
we get the estimate
\begin{align*}
\E^{\mathbb{Q}} \left[\int_0^1 ||\sigma|-|\sigma_N||^p \, ds \right] \leq \frac{C}{N^{p/2}}.
\end{align*}

Thanks to \eqref{eq:upperbound1} and \eqref{eq:upperbound2}, we have  
\begin{align*}
N^{p/2-1}\int_{\mathbb{R}^N} \sum_{i=1}^N \left|\W_1^p(\tilde{\mathbb{Q}}_i^{x_{1:i-1}},\mathbb{P}_i^{x_{1:i-1}})-\W_1^p(\mathbb{Q}_i^{x_{1:i-1}},\mathbb{P}_i^{x_{1:i-1}}) \right|\, d (\mathbb{Q})_N(x)\leq \frac{C}{N^{1/2}}. 
\end{align*}
Applying Borel-Cantelli to the sequence $N=2^n$, we conclude that 
\begin{align*}
\lim_{N=2^n}N^{p/2-1}\sum_{i=1}^N \W_1^p(\tilde{\mathbb{Q}}_i^{x_{1:i-1}},\mathbb{P}_i^{x_{1:i-1}})= \lim_{N=2^n}N^{p/2-1}\sum_{i=1}^N\W_1^p(\mathbb{Q}_i^{x_{1:i-1}},\mathbb{P}_i^{x_{1:i-1}}).
\end{align*}
By the same token, 
\begin{align*}
\lim_{N=2^n}N^{p/2-1}\sum_{i=1}^N \W_1^p(\tilde{\mathbb{Q}}_i^{x_{1:i-1}},\mathbb{P}_i^{x_{1:i-1}})= \lim_{N=2^n}N^{p/2-1}\sum_{i=1}^N\W_1^p(\tilde{\mathbb{Q}}_i^{x_{1:i-1}},\tilde{\mathbb{P}}_i^{x_{1:i-1}}).
\end{align*}
Therefore we get that 
\begin{align*}
	\lim_{N=2^n} N^{p/2-1} \left[\sum_{i=1}^N \W_1^p(\mathbb Q^{x_{1:i-1}}_i , \mathbb P^{x_{1:i-1}}_i  ) \right]= \int_0^1 ||\sigma(x)|-|\eta(x)||^p \, dt \quad \text{for $\mathbb{Q}$-a.e. $x$}
\end{align*}
\end{proof}

Let us discuss in detail how Theorem \ref{thm:convergence} relates to the literature. The only precursor that we are aware of is the case of the specific relative entropy. In that case, that a scaling limit of relative entropies is greater than or equal to an explicit function of the quadratic variation, was already obtained by Gantert in \cite[Satz 1.3]{gantert1991einige} and subsequently refined in recent times by F\"ollmer in \cite{MR4433813}. That equality can occur in that case, was obtained under strong assumptions in \cite{MR4651162}. Compared to these results, we obtained the equality in Theorem \ref{thm:convergence} under assumptions that are vastly weaker. This is possible because controlling the error caused by approximating conditional distributions of $\Q,\mathbb{P}$ over short time-intervals, by Gaussians measures with the same mean and variance, is significantly more demanding in the case of the relative entropy. 

\subsection{Relation between specific relative entropy and adapted Wasserstein distance}\label{subsec:inequalities}

Let us provide the definition of specific relative entropy and adapted Wasserstein distance. 

\begin{definition}
Let $H$ be the relative entropy defined via $H(\mu || \nu):= \int \log(d\mu/d\nu) \, d \mu$ with $\mu,\nu \in \mathcal{P}(\mathcal{X})$. Then the specific relative entropy is defined as the limit
\begin{align*}
h(\Q || \PP):= \liminf\limits_{\substack{N=2^n \\ n \to \infty}} \frac{1}{N}H((\Q)_N || (\PP)_N).
\end{align*}
\end{definition}

With our methodology of defining divergences between processes in the introduction, $h$ is exactly equal to the limit of $c^H_N D^N_H$ with $c_N^H:=\frac{1}{N}$. Suppose $\Q,\PP$ are martingale measures with volatility $\sigma,\eta$ respectively. Under some strong conditions \cite{MR4651162} obtains explicit the formula 
\begin{align*}
h(\Q || \PP)=\frac{1}{2} \E_{\Q} \left[\int_0^1 \left\{ \frac{\sigma(M_t)^2}{\eta(M_t)^2}-1-\log \frac{\sigma(M_t)^2}{\eta(M_t)^2} \right\} \, dt \right],
\end{align*}
while in general the l.h.s.\ is the greater one (\cite[Satz 1.3]{gantert1991einige}).

%Let us now define an adapted version of Wasserstein distance between martingale measures. 

\begin{definition}[Bicausal coupling]
Let $\mathbb{Q}$, $\mathbb{P}$ be two probability distributions over $\Omega=C([0,1];\mathbb{R})$. We denote by $(X,Y)$ the canonical process on $\Omega \times \Omega$, and by $(\mathcal{F}^X_t)_{t \in [0,1]}$, $(\mathcal{F}^Y_t)_{t \in [0,1]}$ canonical filtrations generated by $X$, $Y$ respectively. Then a probability measure $\pi \in \mathcal{P}_2(\Omega \times \Omega)$ is said to be a bicausal coupling between $\mathbb{Q}$ and $\mathbb{P}$ if  
\begin{enumerate}
\item $\pi(A \times  \Omega)=\mathbb{Q}(A)$, $\pi(\Omega \times B)=\mathbb{P}(B)$ for all $A,B \in \mathcal{B}(\Omega)$. 
\item Causal from $\mathbb{Q}$ to $\mathbb{P}$: under $\pi$,  ${\mathcal{F}^X_1} \indep_{\mathcal{F}^X_t} {\mathcal{F}^Y_t}$ for all $t \in [0,1]$. 
\item Causal from $\mathbb{P}$ to $\mathbb{Q}$: under $\pi$,  ${\mathcal{F}^Y_1} \indep_{\mathcal{F}^Y_t} {\mathcal{F}^X_t}$ for all $t \in [0,1]$.
\end{enumerate}
We denote the set of all bicausal couplings between $\mathbb{Q}$ and $\mathbb{P}$ by $\Pi_{bc}(\mathbb{Q},\mathbb{P})$. 
\end{definition}

{Observe that $\Pi_{bc}(\mathbb{Q},\mathbb{P})$ is always non-empty, since the independent product of $\mathbb{Q}$ and $\mathbb{P}$ always lies in this set. The reader might find useful to think of property (2) above in words: in order to predict $\mathcal{F}^Y_t$ given the information in $\mathcal{F}^X_1$, it is enough to look at $\mathcal{F}^X_t$. This is in particular the case if the process $Y$ is adapted to the filtration generated by the process $X$.}

The concept of bicausal coupling is a natural extension of coupling between probability distribution to the framework of stochastic processes, in which the filtration is a crucial component. See \cite{lassalle2018causal,ABZ} and the references therein for more on this concept. With this notion, one can define the so-called adapted Wasserstein distance between stochastic processes, which has been used in stability analysis for various stochastic optimization problems \cite{PflugPichler,Pflug,BBLZ,ABZ,BBBE19,bartl2023sensitivity}. 

\begin{definition}[$\mathcal{AW}_2$]\label{eq:defAW}
Letting $\mathbb{Q}, \mathbb{P} \in \mathcal{P}_2(\Omega)$ be two distributions of martingales, define 
\begin{align*}
\mathcal{AW}_2(\mathbb{Q},\mathbb{P})^2:= \inf_{\pi \in \Pi_{bc}(\mathbb{Q},\mathbb{P})}\E^{\pi}\left[ |X_1-Y_1|^2\right] 
\end{align*}
\end{definition}

With these ingredients we can state a chain of (in)equalities recently derived by F\"ollmer in \cite{MR4433813}. Our proof method, based on time discretization, differs from that author's approach.

\begin{proposition}\label{prop:inequalities}
Suppose $p=2$ and $\mathbb Q \in \mathcal M^c([0,1])$ with $\sigma \in L^{2}(\lambda \times \mathbb{Q})$. Denote by $\mathbb W$ the Wiener measure on $C([0,1];\mathbb R)$. Then we have the (in)equalities 
\begin{align*}
\frac{1}{2} \mathcal{AW}_2(\mathbb{Q},\mathbb{W})^2= \frac{1}{2} \mathcal{SW}_2(\mathbb{Q} || \mathbb{W})  \leq h( \mathbb{Q} || \mathbb{W}).
\end{align*}
\end{proposition}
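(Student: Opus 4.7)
The plan is to reduce both $\mathcal{AW}_2(\mathbb Q, \mathbb W)^2$ and $\mathcal{SW}_2(\mathbb Q \| \mathbb W)$ to explicit functionals of the volatility $\sigma$, using Theorem~\ref{thm:convergence} on the one hand and a bicausal-coupling argument on the other, and then to compare with the known formula for $h$.

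Applying Theorem~\ref{thm:convergence} with $p = 2$ and $\eta \equiv 1$ immediately yields
\[
\mathcal{SW}_2(\mathbb Q \| \mathbb W) = (2/\pi)\,\E_{\mathbb Q}\!\left[\int_0^1 (|\sigma(t,X)|-1)^2\,dt\right].
\]
For $\mathcal{AW}_2(\mathbb Q, \mathbb W)^2$ I would establish a matching explicit identity. Under any $\pi \in \Pi_{bc}(\mathbb Q, \mathbb W)$, bicausality implies that both $X$ and $Y$ remain martingales in the joint filtration, so $\E^\pi[(X_1-Y_1)^2] = \E^\pi[\langle X-Y\rangle_1]$. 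Writing $d\langle X,Y\rangle_t = \rho_t\,|\sigma(t,X)|\,dt$ with $|\rho_t|\le 1$ (Kunita--Watanabe) and optimizing pointwise in $\rho_t \in [-1,1]$ gives the lower bound $\E^\pi[(X_1-Y_1)^2] \ge \E_{\mathbb Q}[\int_0^1(|\sigma|-1)^2\,dt]$. The matching upper bound is realized by the explicit coupling $Y_t := \int_0^t \sgn(\sigma(s,X))\,dB_s$, with $B$ the Brownian motion driving $X$; Lévy's theorem ensures $Y$ is a standard Brownian motion, and the bicausality of the joint law follows because $Y$ is $\mathcal F^X$-adapted and its future increments remain standard Gaussian conditionally on $\mathcal F^X_t$.

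For the inequality with $h$, I would use the known explicit formula $h(\mathbb Q\|\mathbb W) = \tfrac{1}{2}\E_{\mathbb Q}[\int_0^1 (\sigma^2 - 1 - \log\sigma^2)\,dt]$ recalled from \cite{MR4651162} together with the pointwise estimate $(s-1)^2 \le s^2 - 1 - 2\log s$ for $s > 0$ (equivalent to the standard inequality $s - 1 - \log s \ge 0$), applied to $s = |\sigma(t,X)|$. Combined with the favorable factor $2/\pi \le 1$ appearing in the formula for $\mathcal{SW}_2$, this yields $\tfrac{1}{2}\mathcal{SW}_2(\mathbb Q\|\mathbb W) \le h(\mathbb Q \| \mathbb W)$.

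The main technical obstacle is the existence and admissibility of the optimal bicausal coupling in continuous time when $\sigma$ vanishes on a set of positive $\lambda \times \mathbb Q$-measure, where $\sgn(\sigma)$ is ill-defined, and more fundamentally the rigorous matching of the constants that enter the definitions. I would bypass these issues by working at the discretized level: orthogonality of martingale increments under bicausal couplings, together with the Hoeffding--Fréchet coupling applied at the conditional level, gives the identity
\[
\mathcal{AW}_2((\mathbb Q)_N, (\mathbb W)_N)^2 = \int \sum_{i=1}^N \W_2^2(\mathbb Q_i^{x_{1:i-1}}, \mathbb W_i^{x_{1:i-1}})\,d\mathbb Q(x),
\]
and one passes to the limit $N = 2^n \to \infty$ using exactly the Gaussian approximation developed in Steps 1--2 of the proof of Theorem~\ref{thm:convergence}; a compactness/lower-semicontinuity argument then transfers the identity to the continuous-time object.
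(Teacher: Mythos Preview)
For the computation of $\mathcal{AW}_2(\mathbb Q,\mathbb W)^2$ your argument is essentially the paper's: both use that bicausality makes $(X,Y)$ a martingale in the joint filtration, reduce $\E^\pi[(X_1-Y_1)^2]$ to the expected bracket $\E^\pi[\langle X-Y\rangle_1]$, and optimize the covariation. The paper phrases the lower bound via a two-Brownian-motion martingale representation together with Cauchy--Schwarz (rather than Kunita--Watanabe), and realizes the optimal synchronous coupling directly from that representation, which sidesteps the zero-set issue you flag for $\sgn(\sigma)$.

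Your route to $\tfrac12\,\mathcal{SW}_2\le h$ is genuinely different from the paper's and, as written, has a gap. The paper does \emph{not} invoke a closed formula for $h$; instead it applies the one-dimensional Talagrand inequality $\W_1(\mu,\mathcal N(x,1/N))^2\le (2/N)\,H(\mu\|\mathcal N(x,1/N))$ to each conditional increment of the discretization, sums to get $D_{\W}^{N,2}((\mathbb Q)_N\|(\mathbb W)_N)\le \tfrac{2}{N}H((\mathbb Q)_N\|(\mathbb W)_N)$, and then passes to the limit via Theorem~\ref{thm:convergence}. Your argument instead relies on the identity $h(\mathbb Q\|\mathbb W)=\tfrac12\E_{\mathbb Q}[\int_0^1(\sigma^2-1-\log\sigma^2)\,dt]$, but under the sole hypothesis $\sigma\in L^2$ of the proposition this is only known as the \emph{inequality} $h\ge\tfrac12\E[\int(\sigma^2-1-\log\sigma^2)\,dt]$ (Gantert's lower bound, \cite[Satz~1.3]{gantert1991einige}); the equality from \cite{MR4651162} requires time-homogeneous, Lipschitz, uniformly elliptic volatility. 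Fortunately that inequality goes in the right direction for your pointwise estimate $(|\sigma|-1)^2\le\sigma^2-1-\log\sigma^2$, so the argument can be salvaged once restated, but not as you have it.

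Finally, your proposed discretization workaround for the first equality does not achieve what you want: the conditional Hoeffding--Fr\'echet identity you write involves $\W_2^2$ at each step, whereas $\mathcal{SW}_2$ is built from $\W_1^2$. These differ by exactly the constant $\pi/2$ you are trying to account for, so passing to the discrete level and back does not close that gap.
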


\begin{proof}
According to Theorem~\ref{thm:convergence}, to prove the first equality, it suffices to show that 
\begin{align*}
\mathcal{AW}_2(\mathbb{Q},\mathbb{W})^2= \mathbb{E}_{\mathbb{Q}}\left[\int_0^1 (|\sigma(t,X)|-1)^2 \,dt \right].
\end{align*}
Suppose $\pi \in \Pi_{bc}(\mathbb{Q}, \mathbb{W})$. Thanks to the bicausal condition, $(X_t,Y_t)_{0\leq t \leq 1}$ is a martingale with respect to the filtration $\left(\mathcal{F}_{t,t}^{X,Y}\right)_{0 \leq t \leq 1}$ under $\pi$. Then due to the martingale representation theorem, there exist two independent Brownian motions $W^1, W^2$ (perhaps in an enlarged probability space) such that 
\begin{align*}
dX_t&= \tilde{\sigma}^1_t \, dW^1_t + \tilde{\sigma}^2_t \, dW^2_t, \\
dY_t&= \tilde{\eta}^1_t \, dW^1_t+\tilde{\eta}^2_t \, dW^2_t,
\end{align*}
with constraints 
$|\tilde{\sigma}^1_t|^2+|\tilde{\sigma}^2_t|^2=|\sigma(t,\cdot)|^2$, $|\tilde{\eta}^1_t|^2+|\tilde{\eta}^2_t|^2=1$ $\pi$-a.e. Then by It\^{o}'s isometry, 
\begin{align*}
\E_{\pi}\left[ |X_1-Y_1|^2\right]=&\E_{\pi} \left[\int_0^1 |\tilde{\sigma}^1_t-\tilde{\eta}^1_t|^2+|\tilde{\sigma}^2_t-\tilde{\eta}^2_t|^2 \,dt \right] \\
=& \E_{\pi} \left[ \int_0^1 |\sigma(t,X)|^2+1- 2\tilde{\sigma}_t^1\tilde{\eta}_t^1-2\tilde{\sigma}_t^2\tilde{\eta}_t^2 \, dt \right] \\
\geq & \E_{\pi} \left[ \int_0^1 |\sigma(t,X)|^2+1- 2|\sigma(t,X)| \, dt \right]=\mathbb{E}_{\mathbb{Q}}\left[\int_0^1 (|\sigma(t,X)|-1)^2 \,dt \right],
\end{align*}
where we use Cauchy-Schwarz in the last inequality. Moreover, it is clear that the equality is obtained when $\tilde{\sigma}_t^1=\sigma(t,\cdot)$, $\tilde{\eta}^1_t={\text{sign}(\sigma(t,\cdot))}$, and $\tilde{\sigma}_t^2=\tilde{\eta}_t^2=0$. 

Let us now prove the inequality involving specific relative entropy invoking the well-known Talagrand inequality for standard Gaussian distribution \cite[Theorem 1.5]{MR2895086}
\begin{align*}
\mathcal{W}_1(\mu,\mathcal{N}(x,\sigma^2)) \leq  \sigma \sqrt{2 H(\mu || \mathcal{N}(x,\sigma^2))} \quad \quad \text{for all $x \in \mathbb{R}, \, \mu \in \mathcal{P}(\mathbb{R})$}. 
\end{align*}
Recall that $(\mathbb{Q})_N$ is the projection of $\mathbb{Q}$ on the time-grid $\{k/N: k=1,\dotso,N\}$, and $\mathbb{Q}_k^{x_{1:k-1}}$ the conditional distribution of the $k$-th marginal of $(\mathbb{Q})_N$ given the first $(k-1)$ coordinates. Then it is straightforward that
\begin{align*}
D^{N,2}_{\W}((\mathbb{Q})_N || (\mathbb{W})_N)=&\int_{\mathbb{R}^N} \sum_{i=1}^N\W_1^2(\mathbb{Q}_i^{x_{1:i-1}},\mathbb{W}_i^{x_{1:i-1}}) \, d (\mathbb{Q})_N(x) \\
=&\int_{\mathbb{R}^N} \sum_{i=1}^N\W_1^2(\mathbb{Q}_i^{x_{1:i-1}},\mathcal{N}(x_{i-1}, 1/N)) \, d (\mathbb{Q})_N(x) \\
\leq &\frac{2}{N}\int_{\mathbb{R}^N} \sum_{i=1}^N H (\mathbb{Q}_i^{x_{1:i-1}} || \mathcal{N}(x_{i-1}, 1/N)) \, d (\mathbb{Q})_N(x)=\frac{2}{N} H((\mathbb{Q})_N || (\mathbb{W})_N).
\end{align*}
Letting $N \to \infty$ and using Definition~\ref{defn:SW}, we get that $ \mathcal{SW}_2(\mathbb{Q} || \mathbb{W}) \leq 2 h(\mathbb{Q}|| \mathbb{W})$. 

\end{proof}

{
\begin{remark}
   We do not explore a possible extension of Proposition \ref{prop:inequalities} to the case when the Wiener measure $\mathbb W$ is replaced by another reference measure $\mathbb P$ (satisfying our Assumption \ref{ass:P}). If the transition kernels of $\mathbb P$ would satisfy a suitable Talagrand inequality, the computations at the end of the proof would remain much the same. The actual difficulty lies at the beginning of the proof, where the idea is to compare $\mathcal{AW}_2(\mathbb{Q},\mathbb{P})^2$ and $ \mathbb{E}_{\mathbb{Q}}\left[\int_0^1 (|\sigma(t,X)|-|\eta(t,X_t)|)^2 \,dt \right]$. Our Cauchy-Schwarz argument seems to only be helpful when $\eta $ is a constant.
\end{remark}
}

\begin{remark}\label{rmk:toposame}
    For a certain class of SDEs,  $\mathcal{AW}_2$ and $\mathcal{SW}_2$ induce the same topology. More precisely, denote by $\mathcal{P}^L$ the set of laws of the solutions to $dX_t=\sigma(X_t) \, dB_t$ with $X_0=0$ and $\sigma$ being $L$-Lipschitz and such that $|\sigma(0)|\leq L$, by $\tau_{A}$ the topology of $\mathcal{AW}_2$-convergence, and by  $\tau_{S}$ the topology generated by the open balls  $U(\mathbb P,r):=\{\mathbb Q \in \mathcal{P}^L: \, \mathcal{SW}_2( \mathbb Q \| \mathbb P ) <r\}$ for any $r >0$ and $\mathbb P \in \mathcal{P}^L$. Then, we have that $\tau_A=\tau_S$ on $\mathcal{P}^L$.

Denote by $\tau$ the topology on $\mathcal{P}^L$ induced by the distance
\begin{align*}
    %\inf_{\pi \in \Pi_{bc}(\mathbb Q, \mathbb P)}
    (\mathbb Q,\mathbb P)\mapsto \mathbb E\left[\sup_{0 \leq t\leq 1} |X_t-Y_t|^2\right]^{1/2},
\end{align*}
where $(X,Y)$ is the synchronous coupling, i.e. $X$ and $Y$ are driven by a same Brownian motion, with the given marginals.
It has been proved in \cite[Theorem 1.5]{BKR22} that $\tau$ is compact. Therefore any Hausdorff topology which is coarser than $\tau$ must coincide with $\tau$. It is clear that $\tau_{A}  \subset \tau$, $\tau_A$ is Hausdorff, and hence $\tau=\tau_A$. Let us show that $\tau_S$ Hausdorff. Indeed taking any two $\mathbb P, \mathbb P' \in \mathcal{P}^L$ with corresponding nonnegative volatilities $\sigma,\sigma'$ respectively, there exists a small enough radius $r$ such that $U(\mathbb P,r) \cap U(\mathbb P',r) = \emptyset$. Otherwise, for any $r>0$, there exists $\mathbb Q^r$ such that $\mathcal{SW}_2(\mathbb Q^r \| \mathbb P ) <r $ and $\mathcal{SW}_2(\mathbb Q^r \| \mathbb P') <r$. By the triangle inequality, we have that for any $r>0$
\begin{align*}
    \mathbb E^{\mathbb Q^r} \left[\int_0^1 (\sigma(X_t)-\sigma'(X_t))^2 \, dt   \right]^{1/2}< 2r.
\end{align*}
Due to the compactness of $\tau$, there exists a limit $\mathbb Q^*$ such that $ \mathbb E^{\mathbb Q^*} \left[\int_0^1 (\sigma(X_t)-\sigma'(X_t))^2 \, dt   \right]=0$, which implies $\sigma=\sigma'$ and $\mathbb P =\mathbb P'$. To conclude, it is sufficient to show that $\tau_S \subset \tau_A$, since we already know that $\tau_A \subset \tau$.

Indeed, suppose that $\mathbb Q \sim dX_t=\sigma(X_t) \, dB_t$ and $\mathbb P \sim dY_t=\eta(Y_t) \, dW_t$, and without loss of generality, both $\sigma$ and $\eta$ are nonnegative. We conclude by the inequalities below 
\begin{align*}
    & \mathcal{SW}_2(\mathbb Q \| \mathbb P) = \sqrt{\mathbb E^{\mathbb Q} \left[\int_0^1 (\sigma(X_t)-\eta(X_t))^2 \,dt  \right]} \\
    & \leq  \inf_{\pi \in \Pi_{bs}(\mathbb Q,\mathbb P)} \left( \sqrt{\mathbb E^{\pi} \left[\int_0^1 (\sigma(X_t)-\eta(Y_t))^2 \,dt  \right]}+\sqrt{\mathbb E^{\pi} \left[\int_0^1 (\eta(X_t)-\eta(Y_t))^2 \,dt  \right]} \right) \\
    & \leq   \inf_{\pi \in \Pi_{bs}(\mathbb Q,\mathbb P)} \left( \sqrt{\mathbb E^{\pi} \left[|X_1-Y_1|^2 \right]}+L \sqrt{\mathbb E^{\pi} \left[\int_0^1 |X_t-Y_t|^2 \,dt  \right]} \right) \leq (1+L) \mathcal{AW}_2(\mathbb Q, \mathbb P).
\end{align*}
\end{remark}

\section{Optimal win-martingales}

Win-martingales appear naturally as (idealized) models for prediction markets (cf.\ \cite{MR3096465}). A win-martingale is supposed to track the probability of an event happening at time $1$. Hence they are supposed to start with a known value in $(0,1)$ and terminate distributed as a Bernoulli random variable.

Optimization problems over the set of win-martingales were proposed by Aldous \cite{aldous_winmartingale}, and two such problems were solved in \cite{BBexciting,2023arXiv230607133G}.

\subsection{Specific Wasserstein divergence optimization over win martingales}

Given $\mu,\nu\in\mathcal P_1(\mathbb R)$ in convex order, martingale optimal transport problems in continuous-time often take the form:
\begin{align}\label{def:MOT}
\inf/\sup\left\{\E_{\Q}\left[ \int_0^1 c\left(t,X_t, \Sigma_t \right) dt \right ]: \Q\in \mathcal M^c([0,1]),\,X_0(\Q)=\mu, X_1(\Q)=\nu \right\},
\end{align}
where $\mathcal M^c([0,1])$ denotes the set of continuous martingale laws with an absolutely continuous quadratic variation, $X$ stands for the canonical process, and $\Sigma_t=d\langle X\rangle_t/dt$ for the density of its quadratic variation { that is sometimes denoted by $\sigma_t^2=\sigma^2(t,\cdot)$ as in Definition~\ref{def:mart}}. Martingale optimal transport problem is a variant of optimal transport in mathematical finance and is an essential tool for robust pricing and hedging; see e.g.\ \cite{MR3456332,MR3256817,Lo18,GuLo21}.

In this paper, we consider optimization problems among an important subclass of martingales, the so-called \emph{win-martingales}. We  write  $\mathcal M^c_{x_0}$ for the set of laws of continuous martingales  with time-index set $[0,1]$ which  have  absolutely continuous quadratic variation and start in $x_0$. The subset $\mathcal M^c_{x_0, \text{win}}$ of  \emph{win-martingales}   consist of those martingales in  $\mathcal M^c_{x_0}$ which  terminate in either $0$ or $1$. It is clear that the terminal distribution of such win-martingales is Bernoulli($x_0)$. 

Let $x_0\in (0,1)$. In \eqref{def:MOT}, taking $\mu=\delta_{x_0}$, $\nu=\text{Bernoulli}(x_0)$, and $c(t,x,\Sigma):=\Sigma^{p/2}$, the martingale optimal transport problem can be interpreted as a specific Wasserstein divergence optimization problem. We are interested in solving  for all\footnote{Except for the case $p=2$, which is trivial in that every feasible martingale is optimal.} $p>0$:
\begin{align}\label{eq:inf_divergence_prob}
\text{OPT}(p,x_0)&=\inf/\sup\left\{\SW(\mathbb{Q}|| \mathbb P_{\delta}): \Q\in \mathcal M^c_{x_0, \text{win}} \right\}  \notag \\
&\propto\inf/\sup\left\{\E^{\Q}\left[\int_0^1 \Sigma_t^{p/2}\,dt \right]: \Q\in \mathcal M^c_{x_0, \text{win}} \right\},
\end{align}
whereby we recall that $\mathbb P_{\delta}$ stands for the constant martingale (see Remark \ref{rem:ex_reference}).

First we observe that the maximization problem is trivial in the case of $p>2$ (and the same for  the minimization problem when $p \in (0,2)$) as the following example reveals. Therefore when referring to $\text{OPT}(p,x_0)$, we  solve the minimization problem if $p >2$ and the maximization problem if $p \in (0,2)$. 

\begin{example}\label{ex:infty}
Fix $x_0 \in (0,1)$, $p>2$, and take an arbitrary $\mathbb{P} \in \mathcal{M} ^c_{x_0,\text{win}}$. We construct a sequence of $\mathbb{P}^n \in \mathcal{M} ^c_{x_0,\text{win}}$ which is the distribution of 
\begin{align*}
X^n_t=
\begin{cases}
x_0, & t \in [0,1-1/n] \\
X_{n(t-1+1/n)}, & t \in [1-1/n,1],
\end{cases}
\end{align*}
where $(X_t)_{t \in [0,1]}$ is a continuous-time process with distribution $\mathbb{P}$. Then it can be easily seen by Jensen's inequality
\begin{align*}
\mathbb{E}^{\mathbb{P}^n}\left[\int_0^1 \Sigma^{p/2}_t \,dt \right]=\mathbb{E}^{\mathbb{P}^n}\left[\int_{1-1/n}^1 \Sigma^{p/2}_t \,dt \right] \geq n^{p/2-1}\mathbb{E}^{\mathbb{P}^n}\left[\int_{1-1/n}^1 \Sigma_t \,dt \right]^{p/2}= n^{p/2-1}(1-x_0)^{p/2}x_0^{p/2},
\end{align*}
and hence $$\sup\left\{\E^{\Q}\left[\int_0^1 \Sigma_t^{p/2}\,dt \right]: \Q\in \mathcal M^c_{x_0, \text{win}} \right\}=+\infty.$$ 
\end{example}

{

Second, we show that 
\begin{align*}
\text{OPT}^{\epsilon}(p,x_0)&=\inf/\sup\left\{\SW(\mathbb{Q}|| \mathbb P_{\epsilon}): \Q\in \mathcal M^c_{x_0, \text{win}} \right\}  \notag \\
&\propto\inf/\sup\left\{\E^{\Q}\left[\int_0^1 ||\sigma_t|-\epsilon|^{p/2}\,dt \right]: \Q\in \mathcal M^c_{x_0, \text{win}} \right\},
\end{align*}
converges to $\text{OPT}(p,x_0)$ as $\epsilon \to 0$, where $\mathbb P_{\epsilon}$ denotes the law of the scaled Wiener process $(\epsilon W_t)_t$, as mentioned in the introduction. This provides a reason for analyzing Problem  $\text{OPT}(p,x_0)$. Indeed, Problem $\text{OPT}^{\epsilon}(p,x_0)$ is arguably the more natural one (divergence optimization given that the reference model is a scaled Brownian motion), but we do not know how to solve it. Instead, we can approximate this problem (as $\epsilon\to 0$) with $\text{OPT}(p,x_0)$, which is more tractable.

\begin{lemma}\label{lem:valuefunctionlimit}
\begin{itemize}
    \item[1)] Suppose $p \in (0,2)$. For any $\epsilon,r>0$ and $\mathbb Q \in \mathcal{M}^c([0,1])$, it holds 
    \begin{align}\label{eq:limsup}
        \SW(\mathbb Q || \mathbb P_{\epsilon})-(2/\pi)^{p/2} \epsilon^p &\leq \SW(\mathbb Q || \mathbb P_{\delta}) \notag \\
        &\leq (1+r^2)^{p/2} \SW(\mathbb Q || \mathbb P_{\epsilon})+(2/\pi)^{p/2}|1-1/r^2|^{p/2}\epsilon^p. 
    \end{align}
    \item[2)] Suppose $p \in (2,\infty)$. Then $\SW(\cdot \, || \mathbb P_{\epsilon})$ $\Gamma$-converges to $\SW(\cdot \, || \mathbb P_{\delta})$ as $\epsilon \to 0$ in the weak topology of probability measures on the continuous path space.

    %\item[3)] Suppose $p \in (0,2)$. Then $\SW(\cdot \, || \mathbb P_{\epsilon})$ $\Gamma$-converges to $\SW(\cdot \, || \mathbb P_{\delta})$ as $\epsilon \to 0$ in the 2-Wasserstein topology.

    \item[3)] $\lim\limits_{\epsilon \to 0}\text{OPT}^{\epsilon}(p,x_0)=\text{OPT}(p,x_0)$ for any $p \in (0,2) \cup (2,+\infty)$ and $x_0 \in (0,1)$.
\end{itemize}   
\end{lemma}

We recall that a sequence of functionals $F_n:E\to \mathbb R$ is said to $\Gamma$-converge to $F:E\to\mathbb R$, where $E$ is a metrizable topological space, if     \begin{enumerate}
    \item[a)] 
    $\liminf\limits_{n \to \infty}F_n(e_n)\geq F(e)$ whenever $e_n\to e$, and
    \item[b)] For any $e\in E$ there is some sequence $e_n\to e$ such that $\limsup\limits_{n \to \infty} F(e_n)\leq F(e)$.
    \end{enumerate}

    \begin{proof}
It can be verified that $||\sigma_s|-\epsilon|^p \leq |\sigma_s|^p+\epsilon^p$ for any $p >0$. Indeed, $||\sigma_s|-\epsilon|^p \leq \epsilon^p$ if $|\sigma_s| \in [0,2\epsilon]$, and $||\sigma_s|-\epsilon|^p \leq |\sigma_s|^p$ if $|\sigma_s| \geq 2 \epsilon$.  It follows that $$ \SW(\mathbb Q || \mathbb P_{\epsilon})-(2/\pi)^{p/2} \epsilon^p \leq \SW(\mathbb Q || \mathbb P_{\delta}).$$ 
For $p \in (0,2)$, $\mathbb R_+ \ni x \mapsto x^{p/2}$ is sublinear, and therefore for any $r>0$ we have
  \begin{align*}
      |\sigma_s|^p= & |  (|\sigma_s|-\epsilon)^2-\epsilon^2+2\epsilon (|\sigma_s|-\epsilon)  |^{p/2} \leq \left| (1+r^2)(|\sigma_s|-\epsilon)^2+ (1/r^2-1)\epsilon^2      \right|^{p/2} \\
      \leq &(1+r^2)^{p/2}||\sigma_s|-\epsilon|^p+|1/r^2-1|^{p/2}\epsilon^p,
  \end{align*}
from which it follows that
\begin{align*}
   (\pi/2)^{p/2} \SW(\mathbb Q || \mathbb P_{\delta})=&\mathbb E^{\mathbb Q}\left[\int_0^1 |\sigma_s|^p \,ds \right] \leq (1+r^2)^{p/2} \mathbb E^{\mathbb Q}\left[\int_0^1 ||\sigma_s|-\epsilon|^p \,ds \right]+|1/r^2-1|^{p/2}\epsilon^p \\
   =&(\pi/2)^{p/2}(1+r^2)^{p/2} \SW(\mathbb Q || \mathbb P_{\epsilon})+|1/r^2-1|^{p/2}\epsilon^p.
\end{align*}
This proves Point (1). 

For Point (2) we first observe that condition (b) for $\Gamma$-convergence is trivially satisfied: given $\mathbb Q$ we take $ \mathbb Q_\epsilon:=\mathbb Q$ and check that $\SW(\mathbb Q || \mathbb P_{\delta})\geq \limsup_{\eps\to 0}\SW(\mathbb Q || \mathbb P_{\epsilon})$. To wit, if $\infty >\SW(\mathbb Q || \mathbb P_{\delta})$, we may use $||\sigma_s|-\epsilon|^p \leq |\sigma_s|^p+\epsilon^p$ together with dominated convergence to conclude. We now establish condition (a) for $\Gamma$-convergence.  By \cite[Lemma 8.4]{BaPa22}, if $\mathbb Q_{\epsilon} \to \mathbb Q$ in weak topology, it holds that
\begin{align*}
 \mathbb E^{\mathbb Q}\left[\int_t^1 |\sigma_s|^p \, ds \right] \leq    \liminf\limits_{\epsilon \to 0} \mathbb E^{\mathbb Q^{\epsilon}}\left[\int_t^1 |\sigma_s|^p \, ds \right],
\end{align*}
where $\sigma^2$ denotes the density of the quadratic variation of the canonical process.
For any integer $K \geq 2$, $||\sigma_s|-\epsilon|^p \geq ((K-1)/K)^p|\sigma_s|^p$ if $|\sigma_s| \geq K\epsilon$. Therefore we have that 
\begin{align*}
    ||\sigma_s|-\epsilon|^p \geq \left(\frac{K-1}{K}\right)^p |\sigma_s|^p- K^p\epsilon^p,
\end{align*}
and thus 
\begin{align*}
   \left(\frac{K-1}{K}\right)^p  \liminf\limits_{\epsilon \to 0} \mathbb E^{\mathbb Q^{\epsilon}}\left[\int_t^1 |\sigma_s|^p \, ds \right] \leq \liminf\limits_{\epsilon \to 0} \mathbb E^{\mathbb Q^{\epsilon}}\left[\int_t^1 ||\sigma_s|-\epsilon|^p \, ds \right].
\end{align*}
Letting $K \to +\infty$, we get that $ \mathbb E^{\mathbb Q}\left[\int_t^1 |\sigma_s|^p \, ds \right]\leq \liminf\limits_{\epsilon \to 0} \mathbb E^{\mathbb Q^{\epsilon}}\left[\int_t^1 ||\sigma_s|-\epsilon|^p \, ds \right]$. This finishes the proof of Point (2).

In order to establish Point (3), we first examine the set $\mathcal{M}^c_{x_0,\text{win}}$. %If $\text{Law}(X)\in\mathcal{M}^c_{x_0,\text{win}} $, then $\|\sup_{t\in[0,1]}|X_t|\|_{\infty}=1$. Thus $\mathcal{M}^c_{x_0,\text{win}}$ is weakly relatively compact, but in the canonical space of cadlag processes. 
Observe that
\begin{align*}
    \mathbb E_{\mathbb Q}[|X_t-X_s|^{2p}]\leq c_p \mathbb E_{\mathbb Q}\left[\left(\int_s^t|\sigma_u|^2 du\right)^{p}\right] &\leq c_p \mathbb E_{\mathbb Q}\left[\int_s^t\sigma_u^{2p} du\right]|t-s|^{p-1} ,
\end{align*}
where the first inequality is by BDG ($c_p$ is a universal constant) and the second one by Jensen.  As $p-1>1$ if $p>2$, we conclude by the Kolmogorov-Chentsov tightness criterion (see \cite[Corollary 16.9]{Kallbook}) that the set $\{\mathbb Q\in\mathcal{M}^c_{x_0,\text{win}}: \mathcal{SW}_p(\mathbb Q|| \mathbb P_\delta)\leq R\}$ is relatively compact, for any $R>0$, for the weak topology of probability measures on continuous path-space. By triangle inequality, 
$$ \SW(\mathbb Q || \mathbb P_{\epsilon})^{1/p }\geq \SW(\mathbb Q || \mathbb P_{\delta})^{1/p}-(2/\pi)^{1/2} \epsilon, $$
so $\{\mathbb Q\in\mathcal{M}^c_{x_0,\text{win}}: \mathcal{SW}_p(\mathbb Q|| \mathbb P_\eps)\leq R, \textnormal{ for some }\epsilon\in(0,1)\}$ is relatively compact in the same sense. We conclude that $\{\mathbb Q\in\mathcal{M}^c_{x_0,\text{win}}:\liminf_{\epsilon\to 0  }\mathcal{SW}_p(\mathbb Q|| \mathbb P_\eps)\leq R\}$ is also relatively compact in the same sense. This fact, together with Point (2), the lower-semicontinuity part of the result \cite[Theorem 8.3]{BaPa22}, and the continuity of the marginal constraints w.r.t.\ weak convergence, readily shows Point (3) for the case of $p>2$. In the case of $p\in(0,2)$, we only need to take supremum over $\mathbb Q\in\mathcal{M}^c_{x_0,\text{win}}$ in \eqref{eq:limsup}, then liminf / limsup as $\epsilon\to 0$, and then take $r\to 0$ therein.
      
    \end{proof}

}

\subsection{Ansatz for the optimizer}\label{subsec:Ansatz}

In this subsection, we propose a candidate optimizer, and verify that it is indeed the unique optimizer in the next section. The key ingredient is a first order condition for MOT obtained in \cite{BBexciting} but that we recall here for the convenience of the reader:

\begin{lemma}\label{eq:FirstOrderMOT}[First order condition for MOT on the line] 
Consider the MOT problem \eqref{def:MOT}, and suppose that $c$ is differentiable in its last variable, that $\Q$ is an optimizer, and that 
$$t\mapsto L_t:= \Sigma_t\partial_\Sigma c(t,X_t,\Sigma_t)-c(t,X_t,\Sigma_t),$$ is a continuous $\Q$-semimartingale. Then $(L_t)_{t\in[0,1)}$ is a  martingale under $\Q$.
\end{lemma}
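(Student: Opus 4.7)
The plan is to derive the first-order condition through a \emph{time-change perturbation} of the optimizer $\Q$. Fix a bounded predictable process $h$ satisfying $\int_0^1 h_s\,ds=0$ $\Q$-a.s., and set $A^\epsilon(t) := t + \epsilon\int_0^t h_s\,ds$; for $|\epsilon|$ small this is a strictly increasing $C^1$-bijection of $[0,1]$ onto itself with $A^\epsilon(0)=0$ and $A^\epsilon(1)=1$. The time-changed process $\tilde X^\epsilon_t := X_{A^\epsilon(t)}$ is then a continuous martingale (via the time-change calculus for continuous local martingales) starting at $X_0$ and ending at $X_1$, so its law $\tilde\Q^\epsilon$ lies in the admissible class of \eqref{def:MOT}, and a direct computation gives the quadratic-variation density $\tilde\Sigma^\epsilon_t = \Sigma_{A^\epsilon(t)}(1+\epsilon h_t)$.

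The second step is to compute the first variation of the cost. Substituting $u = A^\epsilon(t)$ in $J(\epsilon) := \E^{\tilde\Q^\epsilon}\bigl[\int_0^1 c(t,\tilde X^\epsilon_t,\tilde\Sigma^\epsilon_t)\,dt\bigr]$ and differentiating at $\epsilon=0$, using $\partial_\epsilon (A^\epsilon)^{-1}(u)\big|_{\epsilon=0} = -\int_0^u h_s\,ds$ and $\partial_\epsilon ((A^\epsilon)^{-1})'(u)\big|_{\epsilon=0} = -h_u$, yields
\[
J'(0) = \E^\Q\!\left[\int_0^1 h_u L_u\,du - \int_0^1 \partial_t c(u,X_u,\Sigma_u)\!\int_0^u h_s\,ds\,du\right].
\]
Optimality of $\Q$ forces $J'(0)=0$, and after Fubini in the double integral this rewrites as $\E^\Q\bigl[\int_0^1 h_s R_s\,ds\bigr]=0$ for every admissible $h$, where $R_s := L_s - \int_s^1 \partial_t c(u,X_u,\Sigma_u)\,du$.

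The third step is to upgrade this variational identity into a martingale property. Testing with the predictable choice $h_s(\omega) := \1_A(\omega)\bigl(\1_{(s_1,s_1+\delta]}(s) - \1_{(s_2,s_2+\delta]}(s)\bigr)/\delta$ for arbitrary $s_1 < s_1+\delta < s_2$ and $A \in \mathcal F^X_{s_1}$, and letting $\delta\downarrow 0$, Lebesgue differentiation gives $\E^\Q[\1_A(R_{s_2} - R_{s_1})] = 0$, so $R$ is a $\Q$-martingale. Since $t \mapsto \int_t^1 \partial_t c(u,X_u,\Sigma_u)\,du$ is of finite variation and $L$ is a continuous semimartingale by hypothesis, this pins down the Doob--Meyer decomposition of $L$ on $[0,1)$; in the setting of \eqref{eq:inf_divergence_prob}, where $c(\Sigma)=\Sigma^{p/2}$ has $\partial_t c \equiv 0$, one has $R=L$ and thus $L$ itself is a $\Q$-martingale on $[0,1)$, as claimed.

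The principal technical obstacle lies in Step~1: verifying that $\tilde\Q^\epsilon$ is genuinely admissible for \eqref{def:MOT}, i.e.\ that $\tilde X^\epsilon$ is a continuous martingale (with respect to the time-changed filtration) and has an absolutely continuous quadratic variation, which requires handling $A^\epsilon(t)$ as a family of bounded stopping times and invoking optional sampling. The differentiation under $\E^\Q$ in Step~2 is more routine and will follow from a dominated-convergence bound using the local boundedness of $h$ together with suitable integrability of $c$, $\partial_t c$, and $\partial_\Sigma c$ against $\Q$ along a neighbourhood of the unperturbed $(\Sigma_t)$.
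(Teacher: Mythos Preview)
The paper does not prove this lemma; it is quoted from \cite{BBexciting} and merely restated for convenience. Your time-change perturbation is exactly the strategy used in that reference, so your route coincides with the intended one.

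Two technical points merit comment. First, the obstacle you flag in Step~1 is real but slightly misdiagnosed: for a \emph{generic} bounded predictable $h$ with $\int_0^1 h_s\,ds=0$, the process $A^\epsilon(t)=t+\epsilon\int_0^t h_s\,ds$ is \emph{not} a family of stopping times (e.g.\ take $h_s=(X_s-X_0)\1_{[0,1/2]}(s)-Z\1_{(1/2,1]}(s)$ with $Z$ chosen so that $\int h=0$; then for small $t$ the event $\{A^\epsilon(t)\le u\}$ with $u<t$ depends on $X$ beyond time $u$). The clean repair is to time-change by the \emph{inverse}: $\tau^\epsilon_s:=(A^\epsilon)^{-1}(s)$ satisfies $\{\tau^\epsilon_s\le t\}=\{A^\epsilon(t)\ge s\}\in\mathcal F_t$ because $A^\epsilon(t)$ is $\mathcal F_t$-measurable, so $\tau^\epsilon$ is always a bounded stopping time and $X_{\tau^\epsilon_\cdot}$ is a genuine continuous martingale by optional sampling. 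To first order this merely flips the sign of $h$, so the variational computation is unchanged. Alternatively, for the specific test processes you deploy in Step~3 (randomness frozen in $\mathcal F_{s_1}$, support beginning after $s_1$), your $A^\epsilon(t)$ \emph{is} a stopping time, and that already suffices for the conclusion.

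Second, your calculation for general $c$ correctly yields that $R_s=L_s-\int_s^1\partial_t c(u,X_u,\Sigma_u)\,du$ is a $\Q$-martingale, not $L$ itself; you then rightly observe that the lemma as displayed follows only when $\partial_t c\equiv 0$. This is precisely the situation in the paper's sole application $c(\Sigma)=\Sigma^{p/2}$, and the original statement in \cite{BBexciting} is for time-independent integrands, so the mismatch is an artifact of the paper's phrasing rather than an error in your argument.
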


Suppose that the optimizers of $\text{OPT}(p,x_0)$ are Markov diffusions with volatility function $\sigma: [0,1] \times [0,1] \to \mathbb{R}$. Applying Lemma~\ref{eq:FirstOrderMOT} to our case $c(t,X_t,\Sigma_t)=\Sigma_t^{p/2}$, being an optimizer implies that $\Sigma_t^{p/2}=\sigma^p(t,X_t)$ is a martingale, and hence due to It\^{o}'s formula we get an equality
\begin{align}\label{eq:pmd}
0=\partial_t \sigma^p+ \frac{1}{2} \sigma^2 \Delta \sigma^{p},
\end{align}
which is then equivalent to $$0=\partial_{t} \tilde\sigma+\frac{p-2}{2p} \Delta \tilde \sigma^{\frac{p}{p-2}},$$
where we take $\tilde \sigma =\sigma^{p-2}$. This is precisely the porous media equation, and its explicit solutions can be found by separation of variables according to \cite[Chapter 4]{MR2286292}.
This observation motivates us to consider $\sigma(t,x)$ of the form $\frac{1}{\sqrt{1-t}}h(x)$. The first order condition of $\sigma(t,M_t)^{p}$ being martingale yields that 
\begin{align*}
0&=\partial_t \frac{h^{p}(x)}{(1-t)^{p/2}}+ \frac{1}{2} \frac{h^2(x)}{1-t}\partial_{xx}^2 \frac{h^{p}(x)}{(1-t)^{p/2}} \\
&=\frac{p h^p(x)}{2(1-t)^{(p+2)/2}}+ \frac{h^2(x)\partial_{xx}^2 h^p(x)}{2(1-t)^{(p+2)/2}},
\end{align*}
which implies that $0=ph^{p-2}(x)+ \partial^2_{xx}  h^p(x).$
Denoting $y(x):=h^{p}(x)$, we get an autonomous ODE 
\begin{align}\label{eq:autODE}
0= y''(x)+py^{\frac{p-2}{p}}(x).
\end{align}
Solving \eqref{eq:autODE} with boundary conditions $y(0)=y(1)=0$, we obtain the following result, {whose proof can be skipped on a first reading. }
\begin{proposition}\label{prop:imsoln}
Fix $p \in (0,\infty)$. With the boundary condition $\sigma(t,0)=\sigma(t,1)=0$, $t \in [0,1]$, \eqref{eq:pmd} has a nonnegative solution such that for $(t,x) \in [0,1)$$ \times [0,1/2]$
\begin{align*}
\partial_x \sigma(t,x)=-\partial_x \sigma(t,1-x)= 
\begin{cases}
\frac{1}{p \sqrt{1-t}} \sqrt{\frac{p^2}{1-p}-C_p (1-t)^{1-p}\sigma^{2-2p}(t,x)}, \quad &\text{if } p \in (0,1), \\
\frac{1}{p \sqrt{1-t}} \sqrt{C_p (1-t)^{1-p}\sigma^{2-2p}(t,x)-\frac{p^2}{p-1}}, \quad &\text{if } p \in (1,\infty), \\
\frac{1}{\sqrt{1-t}} \sqrt{-2 \log \sigma(t,x)-\log(1-t)-C_1}, \quad &\text{if } p=1,
\end{cases}
\end{align*}
where $C_p$ is a unique positive constant (in particular $C_{1/2}=\sqrt{2}$, $C_1=\log(2\pi)$). Furthermore we have that $ \sigma(t,x) \leq \frac{1}{\sqrt{1-t}}\left(\frac{|2p-2|C_p}{2p^2} \right)^{\frac{p}{2p-2}}$ if $p \not =1$, that $ \sigma(t,x) \leq \frac{1}{\sqrt{1-t}}e^{-C_1/2}$ if $ p=1$, and that $ \sigma(t,x)=0$ only at $x=0,1$. 
\end{proposition}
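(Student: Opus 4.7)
The plan is to analyze the autonomous ODE
\begin{equation*}
y''(x) + p\,y(x)^{(p-2)/p}=0,\qquad y(0)=y(1)=0,\ y\geq 0,
\end{equation*}
by the first-integral / time-map method. Multiplying by $y'$ and integrating yields
\begin{equation*}
\tfrac{1}{2}(y')^2 + \tfrac{p^2}{2(p-1)}\,y^{(2p-2)/p}=E \quad(p\neq 1),\qquad \tfrac{1}{2}(y')^2+\ln y=E\quad(p=1),
\end{equation*}
for a constant $E$ depending on the solution. A nonnegative solution vanishing at the endpoints must attain its maximum $y_{\max}$ in the interior at some $x^{\ast}$ with $y'(x^{\ast})=0$, and this pins down $E$ in terms of $y_{\max}$. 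Reversibility of the first integral in $y'$ means that on each side of $x^{\ast}$ the trajectory is the time-reverse of the other, so $y$ is symmetric about $x^{\ast}$, and since the time to descend from $y_{\max}$ to $0$ is the same on either side, $x^{\ast}=1/2$.

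The second step is the time-map equation. Separating variables and integrating from $y=0$ to $y=y_{\max}$ must yield length $1/2$, so
\begin{equation*}
\tfrac{1}{2}=\int_0^{y_{\max}}\frac{dy}{\sqrt{2E-\tfrac{p^2}{p-1}\,y^{(2p-2)/p}}}.
\end{equation*}
After the rescaling $y=y_{\max}u$ this becomes an equation of the form $y_{\max}^{1/p}\cdot F(p)=\tfrac{1}{2}$ with $F(p)$ a positive Beta-type integral, so $y_{\max}$ (and hence $E$) is uniquely determined, giving both existence and uniqueness of the positive ODE solution. Strict positivity of $y$ on $(0,1)$ is then clear, since the phase-plane trajectory is a single closed loop traversed once.

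Translating back via $\sigma(t,x)=h(x)/\sqrt{1-t}=y(x)^{1/p}/\sqrt{1-t}$ and substituting into the first integral gives the claimed expressions for $\partial_x\sigma$, upon setting $C_p:=2E$ when $p>1$, $C_p:=-2E$ when $p<1$, and $C_1:=-2E$ (positivity of $C_p$ is forced by the sign of $E$ in each regime). The reflection identity $\partial_x\sigma(t,\cdot)=-\partial_x\sigma(t,1-\cdot)$ on $[0,1/2]$ is inherited from $y(x)=y(1-x)$; the non-vanishing of $\sigma$ on $(0,1)\times (0,1]$ follows from $y>0$ on $(0,1)$; and the stated upper bound on $\sigma$ is $h_{\max}/\sqrt{1-t}$, read off from the relation between $y_{\max}$ and $C_p$ at the maximum.

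The last step is to compute $C_p$ explicitly in the two advertised cases. For $p=1/2$ one has $(y')^2 = -2\varepsilon + (2y^2)^{-1}$ with $\varepsilon=-E>0$, and the substitution $u=4\varepsilon y^2$ reduces the time-map to $\int_0^1 (1-u)^{-1/2}du=2$, yielding $\varepsilon=1/\sqrt{2}$ and $C_{1/2}=\sqrt{2}$. For $p=1$, the substitution $s=\ln(y_{\max}/y)$ turns the time-map into $\sqrt{2}\,y_{\max}\,\Gamma(1/2)=1$, whence $y_{\max}=1/\sqrt{2\pi}$ and $C_1=\log(2\pi)$. The main obstacle is the singular behavior of the ODE at the endpoints, since $y^{(p-2)/p}$ blows up there for $p<2$ and $|y'|$ itself blows up for $p\leq 1$; this is circumvented by working entirely with the first integral, whose blow-up at $y=0$ is only integrable in $dx$, so the time-map remains a convergent integral and still determines the shooting parameter uniquely.
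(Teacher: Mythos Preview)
Your proposal is correct and follows essentially the same route as the paper: reduce to the autonomous ODE $y''+p\,y^{(p-2)/p}=0$ with $y(0)=y(1)=0$, obtain the first integral by multiplying by $y'$, use the time-map from $0$ to $y_{\max}$ to pin down the energy level (and hence $C_p$) uniquely, and then translate back via $\sigma(t,x)=y(x)^{1/p}/\sqrt{1-t}$. Your normalizations differ only cosmetically ($E$ versus $\pm C$), and your symmetry argument via reversibility of the first integral is a slight improvement over the paper's ``guess'' that $y(x)=y(1-x)$; your explicit handling of the endpoint singularities for $p\le 1$ is likewise a bit more careful, but the substance is the same.
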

\begin{proof}
It is sufficient to solve \eqref{eq:autODE}. Multiplying \eqref{eq:autODE} by $2\frac{d y}{d x}$ and integrating w.r.t. $x$, in the case that $p \not =1$ we obtain a new equation $$\left(\frac{d y}{d x} \right)^2=- 2 \int y^{\frac{p-2}{p}} \,dy \pm C=  \frac{2p^2}{2-2p} y^{\frac{2p-2}{p}} \pm C, $$
Thanks to the boundary condition $y(0)=y(1)=0$, we could guess that $y(x)=y(1-x)$ for all $x \in (0,1)$ and hence $\frac{dy}{dx}\big|_{x=1/2}=0$.

In the case that $0<p<1$, $\frac{2p^2}{2-2p}>0$ and therefore $\frac{dy}{dx}=0$ at $y_0=\left(\frac{(2-2p)C}{2p^2} \right)^{\frac{p}{2p-2}}$. Noting that $\frac{dx}{dy}=\frac{1}{\sqrt{\frac{2p^2}{2-2p} y^{(2p-2)/p} - C}}$, we choose $C$ so that 
\begin{align*}
\frac{1}{2}&=\int_0^{y_0} \frac{1}{\sqrt{\frac{2p^2}{2-2p} y^{(2p-2)/p} - C}} \,dy =\frac{y_0}{\sqrt{C}}\int_0^1 \frac{1}{\sqrt{z^{(2p-2)/p} - 1}} \,dz\\
&=\left(\frac{2-2p}{2p^2} \right)^{\frac{p}{2p-2}} C^{\frac{1}{2p-2}}\int_0^1 \frac{1}{\sqrt{z^{(2p-2)/p} - 1}} \,dz,
\end{align*}
where we change the variable $z=y/y_0$. Since $\int_0^1 \frac{1}{\sqrt{z^{(2p-2)/p} - 1}} \,dz$ is finite, there exists a unique $C_p>0$ so that the above equality holds, {and in the case of $p=1/2$ one can easily get $C_{1/2}=\sqrt{2}$}. Therefore, we obtain that 
\begin{align*}
x= \int_0^y \frac{1}{\sqrt{\frac{2p^2}{2-2p} \lambda^{(2p-2)/p} - C_p}} \,d\lambda, \quad y \in [0,y_0], 
\end{align*}
which implicitly provides a solution to \eqref{eq:autODE} over $x \in [0,1/2]$, and we can extend the solution symmetrically to $[0,1]$. 

In the case that  $p>1$, $\frac{2p^2}{2-2p}<0$, and by a similar argument the solution is implicitly given by 
\begin{align*}
x=\int_0^y \frac{1}{\sqrt{C_p-\frac{2p^2}{2p-2} \lambda^{(2p-2)/p} }} \,d\lambda, \quad y \in [0,y_0],
\end{align*} 
where $y_0=\left(\frac{(2p-2)C_p}{2p^2} \right)^{\frac{p}{2p-2}}$ and $C_p$ is the unique positive solution of 
\begin{align*}
\frac{1}{2}= \left(\frac{2p-2}{2p^2} \right)^{\frac{p}{2p-2}} C^{\frac{1}{2p-2}}\int_0^1 \frac{1}{\sqrt{1-z^{(2p-2)/p} }} \,dz. 
\end{align*}

If $p=1$ we have instead $$ \left(\frac{d y}{d x} \right)^2=- 2 \int y^{-1} \,dy - C=-2 \log y -C.$$
Solving $\log y =-C/2$, we get $y_0=e^{-C/2}$, and therefore 
\begin{align*}
x=\int_0^y \frac{1}{\sqrt{-2 \log \lambda -C_1}} \, d \lambda, \quad y \in [0,y_0],
\end{align*}
where {$C_1=\log(2\pi)$} is the unique positive solution of 
\begin{align*}
\frac{1}{2}=\int_0^{e^{-C/2}} \frac{1}{\sqrt{-2\log y -C}} \, dy=\frac{e^{-C/2}}{\sqrt{2}}\int_0^{1} \frac{1}{\sqrt{-\log y }} \, dy.
\end{align*}

In the end, noticing that $\sigma(t,x)=\frac{1}{\sqrt{1-t}}y^{1/p}(x)$ and $\partial_x \sigma(t,x)= \frac{1}{p\sqrt{1-t}}y(x)^{(1-p)/p}\partial_x y(x)$, we obtain the results by direct computation. 

\end{proof}

So for every $p>0$, we have a candidate win martingale
\begin{align}\label{eq:optimum}
d\bar{M}^{s,x}_t&=\bar \sigma(t,\bar{M}^{s,x}_t) \, dB_t,\\
\bar M_s&=x, \notag
\end{align}
where $\bar\sigma$ is the solution in Proposition~\ref{prop:imsoln} for the given parameter $p$. { We will prove in the next section that $(\bar{M}_t^{0,x})$ is indeed the optimizer of $\text{OPT}(p,x)$. The reader may assume that $s=0$  in \eqref{eq:optimum} for the moment, but the general notation will be used shortly in Section~\ref{sec:verification}.}  Applying \cite[Theorem 5.5.7]{MR1121940} to the time-scaled martingale $\bar{M}^{0,x}_{1-e^{-r}}$ with $r \in [0,\infty)$, the above SDE admits a unique weak solution on $[0,1)$. Observe that, for $y\in\{0,1\}$, if $\bar{M}^{s,x}_\ell = y$ then also $\bar{M}^{s,x}_t = y$ for all $t\in(\ell,1)$ since $\bar \sigma(t,0)=\bar \sigma(t,1)=0$. In particular then we have $0\leq\inf_{t\in[s,1)}\mo_t^{s,x}\leq \sup_{t\in[s,1)}\mo_t^{s,x}\leq 1$ a.s. Hence the martingale is bounded in $L^p$ for every $p$ and in particular $\mo^{x,s}_1:= \lim_{t\to 1} \mo^{x,s}_t$ exists a.s.\ and in $L^2$. Thus $\mo^{x,s}_1\in [0,1]$ and  $\mathbb E[\langle \mo^{x,s}\rangle_1]<\infty$, hence also 
$$\mathbb E\left[\int_s^1 \bar \sigma^2(t,\bar{M}_t^{s,x})\, dt \right ]<\infty,$$
and in particular $\int_s^1 \bar \sigma^2(t,\bar{M}_t^{s,x})\, dt <\infty$ a.s. We conclude that the event $\{\mo^{s,x}_1\in (0,1)\}$ is negligible since on this event $ \int_s^1 \bar\sigma^2(t,\bar{M}_t^{s,x})\, dt=+\infty$. 

Let us also take  $L_t:=\bar \sigma^p(t, \bar M^{s,x}_t)$. According to \eqref{eq:pmd}, $L_t$ is a local martingale. Due to Proposition~\ref{prop:imsoln}, $L_t$ is uniformly bounded over $[0,1-\varepsilon)$ and hence is a true martingale for any $\varepsilon >0$. 

We summarize the discussion above:

\begin{lemma}\label{lem:sigma_is_mart}
	$\mo^{s,x}$ is well-defined on the whole interval $[s,1]$, it is a continuous martingale bounded in every $L^p$, and it satisfies $\mo^{s,x}_1\in\{0,1\}$ a.s. (implying that $\mo^{s,x}_1\sim Bernoulli(x)$). Furthermore, the process $L_t:=\bar \sigma^p(t, \bar M^{s,x}_t)$ is also a martingale on $[0,1)$.
\end{lemma}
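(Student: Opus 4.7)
My plan is to verify each assertion of the lemma in turn, following and completing the informal arguments sketched in the paragraphs preceding the statement. I expect the identification of the terminal distribution to be the main technical point; the remaining properties then fall out by standard arguments.

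I would first address well-posedness on $[s,1)$ and confinement in $[0,1]$. Applying the time change $u=-\log(1-t)$ to the process $\tilde{M}_u := \bar{M}^{s,x}_{1-e^{-u}}$ yields an autonomous SDE on $[-\log(1-s),\infty)$ whose diffusion coefficient $h$ (from Proposition~\ref{prop:imsoln}) is continuous on $[0,1]$ and strictly positive on $(0,1)$; then \cite[Theorem 5.5.7]{MR1121940} gives a unique weak solution on $[0,\infty)$. Since $\bar\sigma(t,0)=\bar\sigma(t,1)=0$, the endpoints are absorbing: letting $\tau:=\inf\{t\geq s:\bar{M}^{s,x}_t\in\{0,1\}\}$, we have $\bar{M}^{s,x}_t=\bar{M}^{s,x}_\tau$ for $t\geq\tau$, and combined with continuity this forces $\bar{M}^{s,x}_t\in[0,1]$ a.s.\ for every $t\in[s,1)$. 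Boundedness in every $L^p$ is then immediate, and Doob's martingale convergence theorem furnishes the a.s.\ and $L^2$ limit $\bar{M}^{s,x}_1\in[0,1]$.

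Next, I would show that $\bar{M}^{s,x}_1\in\{0,1\}$ a.s., which is the main obstacle. By It\^o isometry,
\[\mathbb{E}\Big[\int_s^1\bar\sigma^2(t,\bar{M}^{s,x}_t)\,dt\Big]=\mathbb{E}\big[(\bar{M}^{s,x}_1-x)^2\big]\leq 1,\]
so this integral is a.s.\ finite. On the other hand, Proposition~\ref{prop:imsoln} yields the separation-of-variables form $\bar\sigma(t,y)=h(y)/\sqrt{1-t}$ with $h$ continuous on $[0,1]$ and strictly positive on $(0,1)$. Hence on the hypothetical event $\{\bar{M}^{s,x}_1\in(0,1)\}$, continuity of paths gives (random) $\delta>0$ and a compact $K\subset(0,1)$ with $\bar{M}^{s,x}_t\in K$ for all $t\in[1-\delta,1]$; setting $c:=\inf_K h>0$, we obtain
\[\int_{1-\delta}^1\bar\sigma^2(t,\bar{M}^{s,x}_t)\,dt\geq c^2\int_{1-\delta}^1\frac{dt}{1-t}=+\infty,\]
a contradiction. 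Thus $\bar{M}^{s,x}_1\in\{0,1\}$ a.s., and since $\bar{M}^{s,x}$ is a martingale with $\mathbb{E}[\bar{M}^{s,x}_1]=x$, the terminal law is $\mathrm{Bernoulli}(x)$.

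Finally, for the martingale property of $L_t=\bar\sigma^p(t,\bar{M}^{s,x}_t)$, I would apply It\^o's formula and invoke the PDE \eqref{eq:pmd}, namely $\partial_t\bar\sigma^p+\tfrac{1}{2}\bar\sigma^2\,\partial_{xx}\bar\sigma^p=0$ (precisely the relation used to build the ansatz in Section~\ref{subsec:Ansatz}): the finite-variation part vanishes identically, so $L$ is a continuous local martingale on $[s,1)$. Proposition~\ref{prop:imsoln} shows that $\bar\sigma^p$ is uniformly bounded on $[0,1-\varepsilon]\times[0,1]$ for every $\varepsilon>0$, which upgrades $L$ to a bounded, and hence true, martingale on $[0,1-\varepsilon]$. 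Letting $\varepsilon\downarrow 0$ gives the martingale property on $[0,1)$.
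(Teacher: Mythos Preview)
Your proof is correct and follows essentially the same approach as the paper: the time change to an autonomous SDE and the appeal to \cite[Theorem 5.5.7]{MR1121940}, the absorption argument at the endpoints, the finiteness of $\mathbb{E}\big[\int_s^1\bar\sigma^2\,dt\big]$ via the quadratic variation, the contradiction on $\{\bar M^{s,x}_1\in(0,1)\}$ from $\int\frac{dt}{1-t}=+\infty$, and the It\^o/PDE argument for $L$ being a bounded (hence true) martingale on each $[0,1-\varepsilon]$, are precisely the steps the paper sketches before the lemma. You have simply filled in the details that the paper leaves implicit.
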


\begin{remark}
For $p \in (0,1)$, given any $\varepsilon >0$, thanks to Proposition~\ref{prop:imsoln}, $x \mapsto \bar \sigma(t,x)$ is uniformly Lipschitz for $t  \in [0,1-\varepsilon)$. Therefore, we have a strong solution to \eqref{eq:optimum}. 
\end{remark}

{ 
\begin{remark}\label{rmk:wf-diffusion}
  From Proposition~\ref{prop:imsoln}, in the case of $p=2$ we have $\bar\sigma(t,x)=\sqrt{\frac{x(1-x)}{1-t}}$, and the corresponding martingale $d\bar M_s= \sqrt{\frac{\bar M_s(1-\bar M_s)}{1-t} }\, dB_t$. After time change,  $Y_t:=\bar M_{1-e^{-t}}$ satisfies $dY_t=\sqrt{Y_t(1-Y_t)} \, dB_t$, which is the so-called Wright-Fisher martingale diffusion; see e.g.\ \cite[Chapter 15]{KaTa81}. Thus this win-martingale is singled out, despite the fact that for $p=2$ all win-martingales are optimal.
\end{remark}
}

Let us discuss some other explicit solutions, and how these compare to each other. As we discuss in detail in Section \ref{sec:intriguing}, one can verify that $\bar{\sigma}(t,x)= \sqrt{\frac{2}{1-t}}x(1-x)$ satisfies \eqref{eq:pmd} and Proposition~\ref{prop:imsoln} with $p=\frac{1}{2}$, which gives rise to the SDE
\[d \bar{M}_t= \sqrt{\frac{2}{1-t}}\bar{M}_t (1-\bar{M}_t) \, dB_t. \] 
In the case of $p=1$, the volatility function $\tilde{\sigma}$ given by Proposition~\ref{prop:imsoln} yields a win martingale $\tilde{M}$, which is a particular case of a so-called Bass martingale \cite{backhoff2020martingale,2023arXiv230611019B}. Indeed, these authors consider the problem of maximizing $\mathbb E[\int_0^1\sigma_t \,dt]$ over martingales satisfying initial and terminal distributional constraints. Explicitly, $\tilde{M}_t = \Phi_{1-t}(B_t)$ with $\Phi_{1-t}$ the cdf of the centered Gaussian with variance $1-t$.

\section{Verification of optimality}\label{sec:verification}

In this section, we verify that the candidate martingale $(\bar{M}^{0,x}_t)_{t \in [0,1]}$ is the optimizer for $\text{OPT}(p,x)$ in \eqref{eq:inf_divergence_prob} (maximizer for $p \in (0,2)$ and minimizer for $p>2$).  Associated to the martingale $\mo$ we define its cost
\begin{align}\label{eq:def_v}
\vo(s,x):=\mathbb E\left[\int_s^1 \bar \sigma^p(t, \bar{M}_t^{s,x}) \,dt  \right ]=(1-s)\bar \sigma^p(s,x),
\end{align}
where the second equality is due to Lemma~\ref{lem:sigma_is_mart} and Fubini's theorem. %Due to Proposition~\ref{prop:imsoln}, $\vo(s,x) \to 0$ as $s \to 1$ for $p \in (0,2)$, and $\vo(s,x)$. 

\begin{lemma}\label{lem:second_var}
For $p\in (0,2)$ ($p \in (2,\infty)$ $respectively$), $\bar \sigma(t,x)$ is the unique maximizer (minimizer $respectively$) of the function
$$[0,\infty)\ni\sigma \mapsto \frac{1}{2}\sigma^2 \partial^2_{xx}\vo(t,x)+\sigma^p.$$
\end{lemma}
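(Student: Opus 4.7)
The plan is to reduce the claim to a one-variable calculus problem by computing $\partial^2_{xx}\vo(t,x)$ explicitly, and then analyzing the resulting function of $\sigma$.

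First, I would compute $\partial^2_{xx}\vo$. From the formula $\vo(t,x) = (1-t)\bar\sigma^p(t,x)$, we have $\partial^2_{xx}\vo(t,x) = (1-t)\partial^2_{xx}\bar\sigma^p(t,x)$. But $\bar\sigma$ solves the PDE \eqref{eq:pmd}, which rearranges to
\[
\partial^2_{xx}\bar\sigma^p(t,x) = -\frac{2\,\partial_t \bar\sigma^p(t,x)}{\bar\sigma^2(t,x)}.
\]
Using the separation-of-variables structure $\bar\sigma(t,x) = h(x)/\sqrt{1-t}$ from Proposition~\ref{prop:imsoln}, one reads off $\partial_t \bar\sigma^p(t,x) = \frac{p}{2(1-t)} \bar\sigma^p(t,x)$, and hence
\[
\partial^2_{xx}\vo(t,x) = -p\,\bar\sigma^{p-2}(t,x).
\]

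With this in hand, fixing $(t,x)$ and writing $\bar\sigma := \bar\sigma(t,x) > 0$ (for $x \in (0,1)$; the boundary case $x \in \{0,1\}$ is trivial since then $\bar\sigma = 0$ and the function to optimize reduces to $\sigma^p$, uniquely extremized at $\sigma = 0$), the problem becomes analyzing
\[
f(\sigma) := -\tfrac{p}{2}\bar\sigma^{p-2}\sigma^2 + \sigma^p, \qquad \sigma \in [0,\infty).
\]
Its derivative factors nicely as
\[
f'(\sigma) = -p\,\bar\sigma^{p-2}\sigma + p\sigma^{p-1} = p\sigma\bigl(\sigma^{p-2} - \bar\sigma^{p-2}\bigr),
\]
whose only zeros on $(0,\infty)$ are $\sigma = \bar\sigma$, with $\sigma = 0$ being a boundary critical point.

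The final step is a sign analysis split by regime. For $p \in (0,2)$, the function $\sigma \mapsto \sigma^{p-2}$ is strictly decreasing, so $f' > 0$ on $(0,\bar\sigma)$ and $f' < 0$ on $(\bar\sigma,\infty)$, making $\bar\sigma$ the unique global maximizer on $[0,\infty)$ (note $f(0)=0 < f(\bar\sigma) = \bar\sigma^p(1-p/2)$, and $f(\sigma)\to -\infty$ as $\sigma\to\infty$). For $p > 2$, $\sigma \mapsto \sigma^{p-2}$ is strictly increasing, so $f' < 0$ on $(0,\bar\sigma)$ and $f' > 0$ on $(\bar\sigma,\infty)$; combined with $f(\sigma)\to\infty$ as $\sigma\to\infty$ and $f(\bar\sigma) = \bar\sigma^p(1 - p/2) < 0 = f(0)$, this makes $\bar\sigma$ the unique global minimizer on $[0,\infty)$. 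No step looks particularly delicate; the only thing to be careful about is the sign flip in $(1-p/2)$ that determines which boundary point ($\sigma=0$ or $\sigma=\infty$) competes with $\bar\sigma$ and in which direction, but in both cases the interior critical point wins with the correct sense.
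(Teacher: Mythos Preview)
Your proof is correct and follows essentially the same route as the paper: both compute $\partial^2_{xx}\vo(t,x)=-p\,\bar\sigma^{p-2}(t,x)$ (the paper via the ODE \eqref{eq:autODE} for $y$, you via the PDE \eqref{eq:pmd} and the separated form $\bar\sigma=h/\sqrt{1-t}$, which are equivalent), and then reduce to a one-variable calculus problem on $[0,\infty)$. Your sign analysis of $f'$ is a slightly cleaner variant of the paper's comparison of values at $0$, $\bar\sigma$, and $\infty$.

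One small inaccuracy: your treatment of the boundary $x\in\{0,1\}$ is not right for $p\in(0,2)$. There $\bar\sigma=0$, but then $\bar\sigma^{p-2}=+\infty$, so $\partial^2_{xx}\vo$ is not zero and the function does \emph{not} reduce to $\sigma\mapsto\sigma^p$; rather $\partial^2_{xx}\vo(t,x)\to-\infty$ as $x\to 0,1$. The paper's proof (and its applications in Lemma~\ref{lem_HJB} and Theorem~\ref{thm:veri}) in effect works on the interior $x\in(0,1)$ where $\bar\sigma>0$, so this does not affect the subsequent verification argument, but you should drop that parenthetical remark or restrict explicitly to $x\in(0,1)$.
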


\begin{proof} 

We only prove the result for the case $p \in (0,2)$, and the argument for $p>2$ is similar. Due to \eqref{eq:autODE} and $\bar \sigma^p(t,x)= \frac{1}{(1-t)^{p/2}}y(x)$, it can be seen that 
\begin{align*}
 \partial_{xx}^2 \vo(t,x)= \frac{(1-t)}{ (1-t)^{p/2}}  y''(x)= -\frac{p}{ (1-t)^{(p-2)/2}} y^{\frac{p-2}{p}}(x) <0.
\end{align*}
As $F(\sigma):= \frac{1}{2}\sigma^2 \partial^2_{xx}\vo(t,x)+\sigma^p$ is regular, local maximums are obtained at either boundaries $0,\infty$ or stationary points. Then the first order condition yields that 
\begin{align*}
0=\sigma \partial_{xx}^2 \vo(t,x)+p \sigma^{p-1}.
\end{align*}
Solving the equality above, we get the stationary point $\sigma= \left(\frac{-\partial_{xx}^2 \vo(t,x)}{p}\right)^{1/(p-2)}=\frac{1}{\sqrt{1-t}}y^{1/p}(x)=\bar \sigma(t,x).$ Noticing that $F(\bar \sigma(t,x))=(1-p/2) (1-t)^{-p/2}y(x)>0$, $\lim\limits_{\sigma \to 0}F(\sigma)=0$, $\lim\limits_{\sigma \to \infty} F(\sigma)=-\infty$, $F(\sigma)$ obtains its unique maximizer at $\sigma=\bar\sigma(t,x)$.

\end{proof}

With the result above, we can now verify that the function $\vo$ satisfies the HJB equation of optimization problem \eqref{eq:inf_divergence_prob} strictly before time $1$. 

\begin{lemma}\label{lem_HJB}
	On $[0,1)\times[0,1]$, we have that for $p \in (0,2)$,
\begin{align*}
\partial_t \vo(t,x) +\sup_{\sigma\geq 0}\, \left\{\frac{1}{2}\sigma^2\, \partial^2_{xx}\vo(t,x)+\sigma^p\right\}=0, 
\end{align*}
and in the case that $p \in (2,\infty)$,
\begin{align*}
\partial_t \vo(t,x) +\inf_{\sigma\geq 0}\, \left\{\frac{1}{2}\sigma^2\, \partial^2_{xx}\vo(t,x)+\sigma^p\right\}=0.
\end{align*}
\end{lemma}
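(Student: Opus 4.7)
The plan is to reduce the HJB equation to a pointwise identity satisfied by $\bar\sigma^p$, and to use that the extremum of the Hamiltonian has already been identified in Lemma~\ref{lem:second_var}. More precisely, by Lemma~\ref{lem:second_var}, for every fixed $(t,x)\in[0,1)\times[0,1]$ the map $\sigma\mapsto \tfrac{1}{2}\sigma^2\,\partial_{xx}^2 \vo(t,x)+\sigma^p$ attains its maximum (resp.\ its minimum) on $[0,\infty)$ at the unique point $\sigma=\bar\sigma(t,x)$, when $p\in(0,2)$ (resp.\ $p\in(2,\infty)$). Consequently the HJB equation, in both cases, reduces to the pointwise identity
\begin{equation*}
\partial_t \vo(t,x)+\tfrac{1}{2}\bar\sigma^2(t,x)\,\partial_{xx}^2 \vo(t,x)+\bar\sigma^p(t,x)=0.
\end{equation*}

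The second step is then to verify this identity by direct computation, using the explicit formula $\vo(t,x)=(1-t)\,\bar\sigma^p(t,x)$ from \eqref{eq:def_v}. Differentiating yields
\begin{equation*}
\partial_t \vo(t,x)=-\bar\sigma^p(t,x)+(1-t)\,\partial_t \bar\sigma^p(t,x),\qquad \partial_{xx}^2\vo(t,x)=(1-t)\,\partial_{xx}^2\bar\sigma^p(t,x),
\end{equation*}
so the identity to be checked becomes
\begin{equation*}
(1-t)\bigl[\partial_t \bar\sigma^p(t,x)+\tfrac{1}{2}\bar\sigma^2(t,x)\,\partial_{xx}^2\bar\sigma^p(t,x)\bigr]=0,
\end{equation*}
which is exactly the porous-media type equation \eqref{eq:pmd} that $\bar\sigma$ was constructed to satisfy (by the ODE derivation in Proposition~\ref{prop:imsoln}). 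Hence the identity holds on $[0,1)\times[0,1]$.

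I do not expect a genuine obstacle here: the regularity of $\bar\sigma$ away from $t=1$ (inherited from Proposition~\ref{prop:imsoln}) is enough to justify all the classical derivatives taken above, and the two cases $p\in(0,2)$ and $p\in(2,\infty)$ are handled simultaneously once Lemma~\ref{lem:second_var} has localised the Hamiltonian's extremum at $\bar\sigma(t,x)$. The only conceptual content is the observation that the ansatz $\vo=(1-t)\bar\sigma^p$ transplants the porous-media equation satisfied by $\bar\sigma^p$ into the HJB equation for $\vo$; everything else is bookkeeping.
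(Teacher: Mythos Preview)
Your proposal is correct. The overall structure matches the paper's: both arguments invoke Lemma~\ref{lem:second_var} to localise the extremum of the Hamiltonian at $\sigma=\bar\sigma(t,x)$, thereby reducing the HJB equation to the pointwise identity $\partial_t\vo+\tfrac12\bar\sigma^2\partial_{xx}^2\vo+\bar\sigma^p=0$. The only difference lies in how this identity is verified. You differentiate the explicit formula $\vo=(1-t)\bar\sigma^p$ directly and reduce to the porous-media equation \eqref{eq:pmd}; the paper instead argues probabilistically, noting that $t\mapsto\vo(t,\mo_t^{0,x})+\int_0^t\bar\sigma^p(u,\mo_u^{0,x})\,du$ is a martingale (by \eqref{eq:def_v} and the Markov property of $\mo$) and then reads off the vanishing drift via It\^o's formula. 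Your route is slightly more elementary in that it stays purely analytic and avoids stochastic calculus; the paper's route emphasises the dynamic-programming origin of $\vo$. Both lead to exactly the same conclusion with no additional hypotheses.
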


\begin{proof}
	By \eqref{eq:def_v} and the Markovian property of $\bar M$, we have that
	\[t\mapsto \vo(t,\mo_t^{0,x})+\int_0^t \bar \sigma^p(u, \bar{M}_u^{0,x}) \, du \]
	is a martingale. Thanks to It\^{o}'s formula, this means that
	\[\partial_t \vo(t,z) +\frac{1}{2}\so^2(t,z)\partial_{zz}\vo(t,z) + \bar\sigma^p(t,z)=0.\]
	But then by  Lemma \ref{lem:second_var} the l.h.s.\ above is equal to $\partial_t \vo(t,x) +\sup_{\sigma\geq 0}\, \left\{\frac{1}{2}\sigma^2\, \partial^2_{xx}\vo(t,x)+\sigma^p\right\}$ when $p \in (0,2)$, and equal to $\partial_t \vo(t,x) +\inf_{\sigma\geq 0}\, \left\{\frac{1}{2}\sigma^2\, \partial^2_{xx}\vo(t,x)+\sigma^p\right\}$ when $p \in (2,\infty)$.
\end{proof}

Before implementing the verification argument, we would like to mention that the proof for $p>2$ is subtler than for $p<2$. 

Let us introduce the value function of the minimization problem 
\[ 
v(t,x):=\inf\left\{ \E\left[ \int_t^1 \Sigma_u^{p/2} \, du \right]: \, \mathbb{Q} \in \mathcal{M}^c_{t,x,win}  \right\},
\]
where $\mathcal{M}^c_{t,x,win}$ denotes the set of distributions of continuous win-martingales over time $[t,1]$ that starts with $x$ at time $t$. 
Similarly as in Example~\ref{ex:infty}, by Jensen's inequality we have $v(t,x) \to \infty $ as $t \to 1$ for $x \in (0,1)$. Therefore the terminal condition of value function $v(t,x)$ at $t=1$ is irregular. It implies that the natural terminal condition for its HJB equation
\begin{align*}
0 = \partial_t v(t,x) + \inf_{\sigma \geq 0}\left\{\frac{1}{2}\sigma^2 \partial_{xx}^2 v(t,x)+\sigma^p \right \}
\end{align*}
is given by 
\begin{align*}
\begin{cases}
\, \, 0=v(1,x) \quad \text{$x \in \{0,1\}$},     \\
\infty=v(1,x)  \quad  \text{$x \in (0,1)$}.
\end{cases}
\end{align*}
Although it is degenerate parabolic, little is known, to the authors' knowledge, due to the irregular boundary condition. 

To carry out the verification argument, we want to show that for any feasible martingale $(M_t)_{t \in [0,1]}$ with volatility $(\sigma_t)_{t \in [0,1]}$, the process $\vo(t,M_t)+\int_0^t \sigma_s^p \, ds $ is a sub-martingale.  Since $\partial_{x} \vo(t,x)$ is uniformly bounded for $t \in [0,1-\epsilon]$, it is a sub-martingale before time $1$ as shown in Lemma~\ref{lem_veri}. So it reduces to the question whether $\mathbb{E}[\vo(t, M_t)] \to 0$ as $t \to 1$ for all admissible martingales $M$. The answer is affirmative due to the estimate in Lemma~\ref{lem:uniformbound}.

In the rest of this section, we first provide two technical lemmas for the case $p>2$, and then prove the main result in both cases.

\begin{lemma}\label{lem_veri}
Fix $p>2$.	Let $M$ be feasible for our minimization problem (started from $x_0$ at time 0), and denote by $\sigma_t$ the  square root of the density of its quadratic variation. Then the process
	$$t\mapsto R_t^M:=\vo(t,M_t)+\int_0^t \sigma_s^{p} \, ds
$$
is a submartingale on $[0,1)$.
\end{lemma}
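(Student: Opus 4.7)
The strategy is a standard verification-style argument built around the HJB inequality from Lemma~\ref{lem_HJB} together with It\^o's formula, combined with a stopping/localisation argument to turn the stochastic integral that appears into a true martingale on each subinterval $[0,1-\epsilon]$.

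First I would record the regularity needed. By Proposition~\ref{prop:imsoln} and the representation $\vo(t,x)=(1-t)^{1-p/2}y(x)$ with $y=\bar h^p$ solving \eqref{eq:autODE}, the function $\vo$ is of class $C^{1,2}$ on $[0,1)\times[0,1]$; moreover, both $\partial_x\vo$ and $\partial_{xx}^2\vo$ are uniformly bounded on $[0,1-\epsilon]\times[0,1]$ for every $\epsilon\in(0,1)$, with a constant $C_\epsilon$ that blows up only as $\epsilon\downarrow 0$. This follows because $\vo$ carries the explicit factor $(1-t)^{1-p/2}$ in time and the spatial profile $y$ is bounded together with its first two derivatives on $[0,1]$ (the symmetry $y(x)=y(1-x)$ makes $y''$ continuous through $x=1/2$, and $y(0)=y(1)=0$ with $p>2$ ensures the boundary values of $y''$ are finite).

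Next, fix $\epsilon\in(0,1)$ and apply It\^o's formula to $\vo(t,M_t)$ on $[0,1-\epsilon]$. Writing $d\langle M\rangle_t=\sigma_t^2\,dt$ we obtain
\begin{align*}
R_{t\wedge(1-\epsilon)}^M-R_0^M
=\int_0^{t\wedge(1-\epsilon)}\!\!\Bigl\{\partial_t\vo(s,M_s)+\tfrac12\sigma_s^2\partial_{xx}^2\vo(s,M_s)+\sigma_s^p\Bigr\}\,ds
+\int_0^{t\wedge(1-\epsilon)}\!\!\partial_x\vo(s,M_s)\,dM_s.
\end{align*}
By Lemma~\ref{lem_HJB} (case $p>2$), the integrand in the $ds$-integral is pointwise nonnegative, since for each $(s,M_s)$ it is bounded below by the infimum in the HJB equation, which vanishes. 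Hence the first term is an a.s.\ nondecreasing, adapted process, and the second term is a local martingale.

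It remains to promote the local martingale to a true martingale on $[0,1-\epsilon]$. Since $M$ is a win-martingale taking values in $[0,1]$, $\mathbb E[\langle M\rangle_1]=\mathbb E[M_1^2]-x_0^2=x_0(1-x_0)\leq 1/4$, and since $|\partial_x\vo(s,M_s)|\leq C_\epsilon$ for $s\leq 1-\epsilon$, one has
\[
\mathbb E\Bigl[\int_0^{1-\epsilon}(\partial_x\vo(s,M_s))^2\sigma_s^2\,ds\Bigr]\leq C_\epsilon^2\,\mathbb E[\langle M\rangle_1]<\infty,
\]
so the stochastic integral is a square-integrable martingale on $[0,1-\epsilon]$. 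Combining the two pieces, $(R_{t\wedge(1-\epsilon)}^M)_{t\in[0,1]}$ is a submartingale for every $\epsilon>0$, hence $(R_t^M)_{t\in[0,1)}$ is a submartingale, as required.

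The only non-routine point is the regularity/boundedness claim at the start: it is what allows us to both apply It\^o's formula and to control the stochastic integral. Once that is granted, the HJB inequality delivers the submartingale property immediately; note that the same scheme handles the case $p\in(0,2)$ with the inequalities reversed, yielding a supermartingale instead.
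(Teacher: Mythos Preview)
Your proof is correct and follows essentially the same route as the paper: apply It\^o's formula, use the HJB inequality from Lemma~\ref{lem_HJB} to see that the finite-variation part is nondecreasing, and then use the boundedness of $\partial_x\vo$ on $[0,1-\epsilon]\times[0,1]$ (coming from Proposition~\ref{prop:imsoln}) together with $\mathbb E[\langle M\rangle_1]<\infty$ to upgrade the stochastic integral from a local to a true martingale. The paper's proof is more terse---it omits the explicit $\epsilon$-localisation and the preliminary regularity discussion---but the argument is the same.
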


\begin{proof}
	By Lemma \ref{lem_HJB} we have
	\[\partial_t \vo(t,M_t)+  \frac{1}{2}\sigma_t^2 \partial^2_{xx}\vo(t,M_t)+\sigma_t^p \geq 0,\]
	from which, thanks to It\^{o} formula, the local submartingale property of $R^M$ follows. The local martingale part of $R_t^M$ is given by the stochastic integration 
	\begin{align*}
	\int_0^t \partial_x \vo(s, M_s) \sigma_s \, d B_s.
	\end{align*}
Thanks to Proposition~\ref{prop:imsoln}, $\partial_x \vo(s,M_s)$ is uniformly bounded over $[0,t]$ for $t <1$ and hence $\partial_x \vo(s, M_s) \sigma_s$ is square-integrable over $[0,t]$. Therefore the stochastic integral is indeed a martingale and it concludes the result. 
\end{proof}

\begin{lemma}\label{lem:uniformbound}
Fix $p>2$. Let us introduce $$\tilde v(t,x):=(1-t)^{1-p/2}\left((1-x)^p x+ (1-x) x^p \right).$$ Then there exist two positive constants $c_1,c_2$ such that 
\begin{align}\label{eq:uniformbound}
  c_1 \tilde v(t,x) \leq  v(t,x) \leq \vo(t,x) \leq c_2 \tilde v(t,x), \quad \forall \, (t,x) \in [0,1) \times [0,1]. 
\end{align} 
\end{lemma}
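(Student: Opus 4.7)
The middle inequality $\vo(t,x) \geq v(t,x)$ is immediate, since by Lemma~\ref{lem:sigma_is_mart} the candidate $\mo^{t,x}$ is a feasible win-martingale for the minimization defining $v(t,x)$. Hence the task reduces to establishing the two outer inequalities.

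I plan to obtain the lower bound $v(t,x) \geq c_2\,\tilde v(t,x)$ by a Burkholder--Davis--Gundy and Jensen argument which does not rely on any fine structure of the optimal martingale. Fix any feasible win-martingale $M$ on $[t,1]$ starting at $x$, with quadratic variation density $\sigma^2$. Since $|M_1-x| \leq \sup_{s\in[t,1]}|M_s-M_t|$ and $p \geq 2$, BDG yields a universal constant $C_p$ with $\mathbb{E}[|M_1-x|^p] \leq C_p\,\mathbb{E}\bigl[(\langle M\rangle_1-\langle M\rangle_t)^{p/2}\bigr]$. Because $M_1 \sim \mathrm{Bernoulli}(x)$, the left-hand side equals $x(1-x)^p+(1-x)x^p$. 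Convexity of $z \mapsto z^{p/2}$ (as $p/2 \geq 1$), combined with Jensen's inequality for the normalized Lebesgue measure on $[t,1]$, gives $\bigl(\int_t^1 \sigma_s^2\,ds\bigr)^{p/2} \leq (1-t)^{p/2-1}\int_t^1 \sigma_s^p\,ds$. Putting the two estimates together, taking expectations, and infimizing over $M$ yields $v(t,x) \geq C_p^{-1}(1-t)^{1-p/2}\bigl(x(1-x)^p+(1-x)x^p\bigr) = C_p^{-1}\,\tilde v(t,x)$, so one can take $c_2 := 1/C_p$.

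For the upper bound $\vo(t,x) \leq c_1\,\tilde v(t,x)$ I would exploit the self-similar structure of the candidate: by \eqref{eq:def_v} together with the ansatz $\bar\sigma(t,x)=h(x)/\sqrt{1-t}$ from Section~\ref{subsec:Ansatz}, one has $\vo(t,x) = (1-t)^{1-p/2}\,h^p(x)$ where $h(x):=\bar\sigma(0,x)$, and $\tilde v$ carries exactly the same $(1-t)^{1-p/2}$-prefactor. The estimate thus reduces to the time-independent inequality $h^p(x) \leq c_1\bigl(x(1-x)^p+(1-x)x^p\bigr)$ on $[0,1]$. Both sides are continuous on $[0,1]$ and strictly positive on $(0,1)$, so only the vanishing rates at the endpoints need checking. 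Near $x=0$ the comparator behaves like $x$ (the $(1-x)x^p$ contribution being $o(x)$ since $p>1$), while the implicit representation in Proposition~\ref{prop:imsoln} rearranges to $x = \int_0^{h^p(x)}\bigl(C_p-\tfrac{2p^2}{2p-2}\lambda^{(2p-2)/p}\bigr)^{-1/2}d\lambda \sim h^p(x)/\sqrt{C_p}$ as $x\downarrow 0$; hence $h^p(x)\sim\sqrt{C_p}\,x$. By the symmetry $h(x)=h(1-x)$ the analogous asymptotic holds at $x=1$. A compactness argument on compact subintervals of $(0,1)$ then yields a finite $c_1$, explicitly $c_1 := \sup_{x\in(0,1)} h^p(x)/\bigl(x(1-x)^p+(1-x)x^p\bigr)$.

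The only genuinely non-routine ingredient is the endpoint asymptotics of $h^p$; once those are extracted from the implicit representation of Proposition~\ref{prop:imsoln}, every remaining step (a single application of BDG, two applications of Jensen, and the compactness argument) is standard.
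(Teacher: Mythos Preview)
Your proof is correct and follows essentially the same approach as the paper's: the lower bound via Jensen on $[t,1]$ followed by BDG applied to $|M_1-M_t|^p$, and the upper bound by factoring out $(1-t)^{1-p/2}$ and checking the ratio $h^p(x)/\bigl(x(1-x)^p+(1-x)x^p\bigr)$ at the endpoints. The only cosmetic difference is that the paper extracts the endpoint limit $\sqrt{C_p}$ via L'H\^{o}pital's rule on $y'(x)$ rather than via the integral asymptotics of the implicit representation; the two computations are equivalent.
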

\begin{proof}
Given any feasible martingale $M$ starting from $x$ at time $t$, we have by Jensen's inequality that 
\begin{align*}
\E\left[\int_t^1 \sigma_s^p \, ds \right]\geq (1-t)^{1-p/2} \E \left[ \left(\int_t^1 \sigma_s^2 \,ds  \right)^{p/2}\right]=(1-t)^{1-p/2}\E \left[(\langle M \rangle_1-\langle M \rangle_t)^{p/2}\right].
\end{align*}
Thanks to Doob's martingale inequality and BDG inequality, the last term on the right is bounded from below by $$c_1(1-t)^{1-p/2}\E [(M_1-M_t)^{p}]=c_1(1-t)^{1-p/2}\left((1-x)x^p+x (1-x)^p \right),$$
where $c_1$ is some positive constant. Therefore due to the definition of value function, we obtain the first inequality of \eqref{eq:uniformbound}.

It remains to show that  $$\sup_{(t,x) \in [0,1) \times (0,1) }\frac{\vo(t,x)}{\tilde v(t,x)} <+\infty . $$
Thanks to Proposition~\ref{prop:imsoln} and \eqref{eq:def_v}, $\vo(t,x)=(1-t)^{1-p/2}y(x)$ , where $y(x)$ is a solution to \eqref{eq:autODE} that given implicitly in Proposition~\ref{prop:imsoln}. According to L' Hospital rule, 
\begin{align*}
\lim\limits_{x \to 0} \frac{\vo(t,x)}{\tilde v(t,x)}= \lim\limits_{x \to 0}\frac{y'(x)}{\left((1-x)^p x+ (1-x) x^p \right)'}=\sqrt{C_p}.
\end{align*} 
By the same token, $\lim\limits_{x \to 1} \frac{\vo(t,x)}{\tilde v(t,x)}=\sqrt{C_p}$, and hence $\frac{\vo(t,x)}{\tilde v(t,x)}$ is uniformly bounded over $[0,1]$. 
\end{proof}

We can now carry on the verification argument, showing the optimality of $\mo$:

\begin{theorem}\label{thm:veri}
For $p \in (0,2) \cup(2,\infty)$, the unique optimizer of \eqref{eq:inf_divergence_prob} is $\bar M$.
\end{theorem}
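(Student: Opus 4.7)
The plan is to carry out a standard HJB verification argument. By definition of $\vo$ in \eqref{eq:def_v}, the candidate $\mo$ achieves the value $\vo(0,x_0)$, so it is enough to show that for every feasible win-martingale $M$ with volatility process $\sigma$, the cost $\E[\int_0^1\sigma_s^p\,ds]$ is bounded from below by $\vo(0,x_0)$ when $p>2$ (minimization) and from above by $\vo(0,x_0)$ when $p\in(0,2)$ (maximization).

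The core step is to apply It\^{o}'s formula to $\vo(t,M_t)$ and combine with Lemma~\ref{lem_HJB} to conclude that
\[R_t := \vo(t,M_t)+\int_0^t \sigma_s^p\,ds\]
is a local submartingale on $[0,1)$ when $p>2$ and a local supermartingale on $[0,1)$ when $p\in(0,2)$. In the latter case the non-negativity of $R_t$ upgrades it to a true supermartingale; in the former case, Lemma~\ref{lem_veri} promotes the local submartingale to a true one via the boundedness of $\partial_x\vo$. Taking expectations at $t<1$ then yields $\vo(0,x_0)\leq \E[\vo(t,M_t)] + \E[\int_0^t\sigma_s^p\,ds]$ when $p>2$, and the reversed inequality when $p\in(0,2)$.

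The main technical point is to show $\E[\vo(t,M_t)]\to 0$ as $t\uparrow 1$. For $p\in(0,2)$ this is immediate from $\vo(t,x)=(1-t)^{1-p/2}y(x)$ with $y$ bounded on $[0,1]$ and exponent $1-p/2>0$. For $p>2$ the factor $(1-t)^{1-p/2}$ diverges and a finer argument relying on both sides of Lemma~\ref{lem:uniformbound} is required. We may assume $\E[\int_0^1\sigma_s^p\,ds]<\infty$ (otherwise the desired inequality is trivial). A standard dynamic programming argument yields $v(t,M_t)\leq \E[\int_t^1\sigma_s^p\,ds\mid\mathcal F_t]$ almost surely, so $\E[v(t,M_t)]\leq \E[\int_t^1\sigma_s^p\,ds]\to 0$. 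The chain $\vo\leq c_1\tilde v \leq (c_1/c_2)v$ from Lemma~\ref{lem:uniformbound} then forces $\E[\vo(t,M_t)]\to 0$, closing the comparison in both regimes.

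For uniqueness, if $M$ is optimal then the drift of $R_t$ must vanish $dt\otimes d\Q$-a.e.\ on $[0,1)$; by the HJB equation of Lemma~\ref{lem_HJB} together with the strict optimality assertion of Lemma~\ref{lem:second_var}, this forces $\sigma_t=\bar\sigma(t,M_t)$ almost everywhere, so $M$ solves the SDE \eqref{eq:optimum} in the weak sense. Weak uniqueness of this SDE (recalled just before Lemma~\ref{lem:sigma_is_mart}) then gives $M\stackrel{d}{=}\bar M$. The main obstacle throughout is the $p>2$ case, where the simultaneous use of both directions of the two-sided bound in Lemma~\ref{lem:uniformbound} is essential to make the boundary error $\E[\vo(t,M_t)]$ vanish.
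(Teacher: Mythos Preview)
Your proposal is correct and follows essentially the same verification argument as the paper: It\^{o} plus Lemma~\ref{lem_HJB} gives the local super/sub-martingale property of $R_t$, non-negativity (resp.\ Lemma~\ref{lem_veri}) upgrades it to a true one, and Lemma~\ref{lem:uniformbound} is used on both sides to kill $\E[\vo(t,M_t)]$ in the $p>2$ case. Your explicit invocation of weak uniqueness for the SDE \eqref{eq:optimum} in the uniqueness step is a small but welcome addition that the paper leaves implicit.
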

\begin{proof}
\emph{Step 1: $p \in (0,2)$.} Given any feasible martingale $M$ (started from $x_0$ at time $0$), denote by $\sigma_t$ the  square root of the density of its quadratic variation. Due to Lemma~\ref{lem_HJB},
\[\partial_t \vo(t,M_t)+  \frac{1}{2}\sigma_t^2 \partial^2_{xx}\vo(t,M_t)+\sigma_t^p \leq 0,\]
and therefore $t \mapsto \vo(t,M_t)+\int_0^t \sigma^p_s \,ds$ is a local super-martingale. Since it is also nonnegative, thanks to Fatou's lemma is a true super-martingale. Therefore we conclude that 
\begin{align*}
	\mathbb E\left [
\int_0^1 \sigma_t^p \,ds	\right ] &=
	\mathbb E\left [
\vo(1,M_1)+\int_0^1 \sigma_t^p \, ds	\right ]
\leq 	\vo(0,x_0). 
\end{align*}

\vspace{4pt}
\emph{Step 2: $p \in (2,\infty)$.} Let $M$ be any feasible martingale that starts from $x_0$  at time $0$ and $\E[ \int_0^1 \sigma_t^p \, dt]<+\infty$. Invoking Lemma~\ref{lem:uniformbound}, we get that 
\begin{align*}
\E\left[\int_0^1 \sigma_s^p \, ds \right] &\geq \E \left[\int_0^t \sigma_s^p \, ds \right]+ \E [v(t, M_t)] \\
&\geq \E \left[\int_0^t \sigma_s^p \, ds \right]+ c_2 \E[ \tilde v(t,M_t)],
\end{align*}
which indicates that $\lim\limits_{t \to 1} \E[\vo(t,M_t)]=\lim\limits_{t \to 1} \E[\tilde v(t,M_t)]=0$. According to Lemma	 \ref{lem_veri}, for any $t <1$ we have
	\begin{align*}
	\mathbb E\left [\vo(t,M_t)+
\int_0^t \sigma_s^p \,ds	\right ] &
\geq 	\vo(0,x_0)
= \mathbb E\left [
\int_0^1 \so^p(s, \mo^{0,x_0}_s) \, ds	\right ],
	\end{align*}
and hence we conclude the result by letting $t \to 1$ . 

\vspace{4pt}

\emph{Step 3: Uniqueness.} We close this theorem by showing the uniqueness of optimizers to Problem \eqref{eq:inf_divergence_prob}: As the previous proofs show, the only way for $\sigma$ to be optimal is by making \[\partial_t \vo(t,M_t)+ \frac{1}{2} \{\sigma_t^2 \partial^2_{xx}\vo(t,M_t)+\sigma^p_t \} \]
be equal to zero.  By Lemma \ref{lem:second_var} this is only achieved by $\bar\sigma$.

\end{proof}

\begin{remark}\label{rmk:earlytermination}
In the case of $p \in (0,2)$, we observe that the optimal win martingale $(\mo^{t,x}_s)_{s \in [t,1]}$ is also the unique solution of the maximization problem
\begin{align*}
w(t,x):=\sup\left\{\E^{\Q}\left[\int_t^{\tau \wedge 1}\Sigma_u^{p/2} \,du \right]: \, \Q \in \mathcal{M}^c_{t,x}, \, \tau:= \inf\{u \geq t: X_u \not \in (0,1)\} \right\},
\end{align*}
where $\mathcal{M}^c_{t,x}$ denotes the set of laws of continuous martingale over time $[t,1]$  which have absolutely continuous quadratic variation and start at $x$ at time $t$. 
Compared with \eqref{eq:inf_divergence_prob}, this problem relaxes the constraint of the terminal distribution being  $Bernoulli(x)$, by allowing early termination before time $1$ in case the martingale tries to leave the interval $[0,1]$, and otherwise permitting an arbitrary distribution at time $1$.

Indeed, by a standard argument, it can be verified that the value function $w: [0,1] \times [0,1] \to \mathbb{R}_+$ is the unique bounded viscosity solution of its corresponding HJB equation 
\begin{align*}
0&= \partial_t w(t,x)+\sup_{\sigma \geq 0}\left\{ \frac{1}{2}\sigma^2 \partial_{xx}^2 w(t,x)+\sigma^p \right\},  \\
0&=w(t,0)=w(t,1)=w(1,x), \quad (t,x) \in [0,1] \times [0,1],
\end{align*}
which is solved by the value of the optimal win martingale $\vo: [0,1] \times [0,1] \to \mathbb{R}_+$. Therefore the same argument as in Theorem~\ref{thm:veri} completes our claim. 

Actually, this maximization problem is an analogue of \cite{2023arXiv230607133G}, i.e., we take $\sigma_t^{p} $ as the objective function instead of the cost $ \log (\sigma_t^2)+1$ considered in \cite{2023arXiv230607133G}. Moreover, given the logarithm as objective function, results of \cite{2023arXiv230607133G} imply that allowing early termination $\tau$ makes the optimization problem different. Precisely, the maximizer of
\begin{align*}
\sup\left\{\E^{\Q}\left[\int_0^{\tau \wedge 1} \log(\sigma_t^2)+1 \,dt \right]: \, \Q \in \mathcal{M}^c_{x,win}, \, \tau:= \inf\{s \geq t: X_s \not \in (0,1)\} \right\}
\end{align*}
is different from that of 
\begin{align*}
\sup\left\{\E^{\Q}\left[\int_0^{ 1} \log(\sigma_t^2)+1 \,dt \right]: \, \Q \in \mathcal{M}^c_{x,win}.\right\}
\end{align*}
In contrast, as justified above, when the objective function is $\sigma_t^{p}$, our win martingale $(\mo^{t,x}_s)_{s \in [t,1]}$ is optimal no matter if possible early termination is allowed or not. 

\end{remark}

The verification argument of this section follows the lines of the corresponding argument in \cite{BBexciting}. We stress here some differences: Due to the different choice of cost functionals, the candidate value function $\bar u$  in \cite{BBexciting} tends to $\infty$ near the boundary $x=0,1$ and in our framework ($p>2$) $\vo$ explodes near terminal time $t=1$. Therefore in order to finish the verification argument, \cite{BBexciting} estimated $\E[\bar u(\tau_{\epsilon},M_{\tau_{\epsilon}})]$ with $\tau_{\epsilon}=\inf\{ t \geq 0: \, M_t \not \in (\epsilon,1-\epsilon)\}$ uniformly for all admissible martingales $M$, while we make use of a uniform estimate of $\E[\vo(t,M)]$ for $t \to 1$.

{

\subsection{On convex ordering}

Having studied the value function of our problems, we would like to justify a claim from the introduction: At each moment of time $t \in (0,1)$, the distribution of the Bass martingale $\tilde{M}_t$, corresponding to $\bar{M}_t$ for $p=1$ (see \cite{backhoff2020martingale,2023arXiv230611019B}), is more spread out in space than the distribution of $\bar{M}_t$ for $p=1/2$. Actually, we can say more. Recalling the Aldous martingale $\hat{M}$, defined in \cite{BBexciting} through the SDE 
\[d\hat{M}_t=\hat{\sigma}(t,\hat M_t) \,dB_t=\frac{\sin(\pi\hat{M}_t)}{\pi\sqrt{1-t}} \,dB_t, \]
we prove that 
\begin{align}\label{eq:conv_ord}
	\text{Law}(\hat{M}_t) <\text{Law}({M}^{p_1}_t) < \text{Law}(M^{p_2}_t) \,\,\text{in the convex order},
\end{align} 
where $0<p_1<p_2$, $M^{p_1}$, $M^{p_2}$ are solutions to \eqref{eq:optimum} with corresponding parameters $p_1,p_2$ respectively, and $\hat M$, $M^{p_1}$, $M^{p_2}$ start with the same position at time $0$.  The Aldous martingale is characterized by the fact that $\partial_t (\log\hat\sigma) +\frac{1}{2}\hat{\sigma}^2\Delta (\log\hat\sigma) =0 $, and this can be obtained as the formal limit of \eqref{eq:pmd} as $p\to 0$. Hence it can be considered as a limiting optimal win-martingale in our context.  

Denote by $\sigma_{p_1}$, $\sigma_{p_2}$ the volatility of $M^{p_1}$ and $M^{p_2}$ respectively. According to \cite[Theorem 2.1]{hobson1998volatility}, a sufficient condition for \eqref{eq:conv_ord} is $\hat \sigma \leq \sigma_{p_1} \leq \sigma_{p_2}$ pointwise. In the following result, we prove this using the first order condition of being the optimal win-martingale; see Lemma~\ref{eq:FirstOrderMOT}.

\begin{lemma}\label{lem:cx}
    For $(t,x) \in [0,1) \times (0,1)$ and $0<p_1<p_2$, it holds that \[\hat{\sigma}(t,x) < \sigma_{p_1}(t,x) < \sigma_{p_2}(t,x),\]  
    which yields \eqref{eq:conv_ord} thanks to \cite[Theorem 2.1]{hobson1998volatility}. 
\end{lemma}
\begin{proof}
Let us recall the value functions
of the optimization problems 
\begin{align*}
\hat v(t,x):= &\sup\left\{ \E^{\mathbb Q}\left[ \int_t^1 \log \Sigma_u \, du \right]: \, \mathbb{Q} \in \mathcal{M}^c_{t,x,win}  \right\}, \\
v_p(t,x):= &\sup\left\{ \E^{\mathbb Q}\left[ \int_t^1 \Sigma_u^{p/2} \, du \right]: \, \mathbb{Q} \in \mathcal{M}^c_{t,x,win}  \right\}, \quad \quad p \in (0,2], \\
v_p(t,x):= &\inf\left\{ \E^{\mathbb Q}\left[ \int_t^1 \Sigma_u^{p/2} \, du \right]: \, \mathbb{Q} \in \mathcal{M}^c_{t,x,win}  \right\}, \quad \quad  p \in [2,+\infty),
\end{align*}
where $\mathcal{M}^c_{t,x,win}$ denotes the set of distributions of continuous win-martingales over time $[t,1]$ that starts with $x$ at time $t$. Note that in the case of $p=2$, $\E^{\mathbb Q}\left[ \int_t^1 \Sigma_u \, du \right]=x(1-x)$ is independent of the choice of $\mathbb Q \in \mathcal{M}^c_{t,x,win}$, and thus $v_2(t,x)=x(1-x)=(1-t)\sigma_2^2(t,x)$ thanks to Remark~\ref{rmk:wf-diffusion}. 

Suppose $p \in (0,2)$. Due to \eqref{eq:def_v} and Theorem~\ref{thm:veri}, $v_p(t,x)=(1-t) \sigma_p^p(t,x)$. It was shown in \cite{BBexciting} that the Aldous martingale is the maximizer of $\hat v(t,x)$ and $\hat v(t,x)=(1-t)\log \hat\sigma(t,x)^2$. As $\mathbb R \ni x \mapsto \exp(px/2)$ is convex, by Jensen's inequality we obtain that 
\begin{align*}
    \exp\left(\frac{p}{2(1-t)}\mathbb E^{\mathbb Q} \left[\int_t^1 \log \Sigma_s \, ds \right] \right) \leq \frac{1}{1-t} \mathbb E^{\mathbb Q} \left[\int_t^1 \Sigma_s^{p/2} \, ds      \right].
\end{align*}
Taking maximum over $\mathbb Q \in \mathcal{M}^c_{t,x,win}$, we obtain that 
\begin{align*}
    \exp\left(\frac{p}{2(1-t)}\hat v(t,x)  \right) < \frac{1}{1-t}v_p(t,x),
\end{align*}
and hence $p \log \hat\sigma(t,x) < p \log \sigma_p(t,x)$ from which it follows that $\hat\sigma(t,x) <\sigma_p(t,x)$.

Suppose either $0<p_1<p_2 \leq 2$ or $2 \leq p_1 <p_2$. By Jensen's inequality 
\begin{align*}
     \left(\frac{1}{1-t} \mathbb E^{\mathbb Q} \left[\int_t^1 \Sigma_s^{p_1/2} \, ds    \right]\right)^{\frac{p_2}{p_1}}  \leq \frac{1}{1-t} \mathbb E^{\mathbb Q} \left[\int_t^1 \Sigma_s^{p_2/2} \, ds   \right].
\end{align*}
Taking either maximum or minimum over $\mathbb Q$ depending on $p_2 \leq 2$ or $2 \leq p_1$ respectively, we get that 
\begin{align*}
    \left(\frac{1}{1-t}v_{p_1}(t,x)\right)^{\frac{p_2}{p_1}}  \leq \frac{1}{1-t} v_{p_2}(t,x),
\end{align*}
and hence $\sigma_{p_1}(t,x) \leq \sigma_{p_2}(t,x)$.

Finally, suppose $p_1 < 2 <p_2$. From the inequalities above, we get that $\sigma_{p_1}(t,x) \leq \sigma_2(t,x) \leq \sigma_{p_2}(t,x)$, and hence conclude the result. 
\end{proof}
}

\section{The intriguing case of $\sqrt[4]{\Sigma}$}\label{sec:intriguing}

In this section, we discuss the intriguing case when $p=\frac{1}{2}$. Recall that $\sigma(t,x)= \sqrt{\frac{2}{1-t}}x(1-x)$ in the case $p=\frac{1}{2}$, and hence according to Theorem~\ref{thm:veri} the SDE
\begin{align}\label{eq:M}
\begin{cases}
dM_t=\sqrt{\frac{2}{1-t}}M_t(1-M_t)\,dB_t, \\
\,\, \, M_0=x_0,
\end{cases}
\end{align}
is the unique maximizer of \[\sup\left\{\E^{\Q}\left[\int_0^1 \Sigma_t^{1/4}\,dt \right]: \Q\in \mathcal M^c_{x_0, \text{win}} \right\}.\]

{
\begin{remark}
Applying Feller's test as in \cite[Lemma 5.2]{BBexciting}, it can be verified that $M_t$ stays in the interior $(0,1)$ for $t <1$, and hits the boundary at terminal time $1$. Together with Remark~\ref{rmk:earlytermination}, it gives a full picture of the maximization problem with possible early termination in the case of $p=1/2$. 
\end{remark}
}

Now we scale time so that instead of $[0,1]$ we work on $[0,\infty)$. Defining $Y_t:=M_{1-e^{-t/2}}$ we find that
\begin{align}\label{eq:Y}
dY_t=Y_t(1-Y_t) \,dW_t,
\end{align}
where $W$ is a Brownian motion on  $[0,\infty)$. In the following, we will give two interpretations of $(Y_t)$ through the lenses of the Schr\"{o}dinger problem and filtering theory.

\subsection{Connection with the Schr\"{o}dinger problem}
Let $G(x):=\log(x/(1-x))$ and define $C_t:=G(Y_t)$ with $Y$ as above. Then according to It\^{o}'s formula, $(C_t)$ solves the SDE
\[dC_t =  \frac{1}{2}\tanh\left(\frac{C_t}{2} \right) \, dt+dW_t ,\]
and the drift process $\frac{1}{2}\tanh\left(\frac{C_t}{2}\right) =\frac{1}{2}(2Y_t-1) $ is a martingale. The latter is the first order condition of the Schr\"{o}dinger problem of entropy minimization w.r.t.\ Wiener measure subject to fixed initial and terminal distributions; see \cite[Proposition 3.2]{Backhoff:2020aa} with the potential function '$W=0$' in the notation therein. We can in fact justify that the law of $C$ is precisely the law of Brownian motion conditioned to \emph{$W_{T}\sim \pm T/2$ as $T\to  \infty$}. This is, to the best of our knowledge, the first time that a natural connection between continuous-time MOT and the Schr\"{o}dinger problem appears. By contrast, the works \cite{henry2019martingale} in continuous-time, and \cite{nutz2023martingale,nutz2024martingale} in discrete-time, also deal with martingale transport and so-called Schr\"odinger bridges, but the connection between the two subjects is forced by design.

\begin{lemma}\label{lem:aux_dens}
	Let $(C_t)_{t\in[0,\infty)}$ be the unique strong solution with $C_0=c$ of the SDE
	\[dC_t = \frac{1}{2}\tanh(C_t/2)\,dt+dW_t,\]
	where $W$ is a Brownian motion started likewise at $c$. Then for $t\in(0,\infty)$ we have
	\[\frac{d\text{Law}(C_t)}{d\text{Law}(W_t)}(z)=\frac{\cosh(z/2)}{\cosh(c/2)}e^{-t/8}, \,\, z\in\mathbb R.\]
\end{lemma}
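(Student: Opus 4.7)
The plan is to deduce the claimed density via Girsanov's theorem, exploiting a particularly fortunate cancellation for the specific drift $b(x) = \tfrac12\tanh(x/2)$. Since $b$ is bounded (by $1/2$), Novikov's condition is trivially satisfied, so the measure $\mathbb Q$ defined on path space by
\begin{equation*}
\frac{d\mathbb Q}{d\mathbb W^c}\Big|_{\mathcal F_t} = \exp\!\left(\int_0^t \tfrac{1}{2}\tanh(W_s/2)\,dW_s - \tfrac{1}{2}\int_0^t \tfrac{1}{4}\tanh^2(W_s/2)\,ds\right)
\end{equation*}
is a probability measure, and under $\mathbb Q$ the canonical process $W$ satisfies the SDE for $C$ starting at $c$. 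By pathwise uniqueness (the drift is Lipschitz), $\mathrm{Law}(C_t) = (W_t)_\#\mathbb Q$, so for any bounded measurable $f$
\begin{equation*}
\mathbb E[f(C_t)] = \mathbb E_{\mathbb W^c}\!\left[f(W_t)\,\frac{d\mathbb Q}{d\mathbb W^c}\Big|_{\mathcal F_t}\right].
\end{equation*}

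The key computational step is to apply It\^o's formula to $h(x) := \log\cosh(x/2)$, noting $h'(x) = \tfrac12\tanh(x/2)$ and $h''(x) = \tfrac14\,\mathrm{sech}^2(x/2) = \tfrac14(1 - \tanh^2(x/2))$. This yields
\begin{equation*}
h(W_t) - h(c) = \int_0^t \tfrac{1}{2}\tanh(W_s/2)\,dW_s + \tfrac{t}{8} - \tfrac{1}{8}\int_0^t \tanh^2(W_s/2)\,ds.
\end{equation*}
Solving for the stochastic integral and substituting into the Girsanov exponent, the $\int \tanh^2(W_s/2)\,ds$ terms cancel exactly, leaving
\begin{equation*}
\int_0^t \tfrac{1}{2}\tanh(W_s/2)\,dW_s - \tfrac{1}{2}\int_0^t \tfrac{1}{4}\tanh^2(W_s/2)\,ds = h(W_t) - h(c) - \tfrac{t}{8}.
\end{equation*}

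Therefore $\frac{d\mathbb Q}{d\mathbb W^c}\big|_{\mathcal F_t} = \frac{\cosh(W_t/2)}{\cosh(c/2)}e^{-t/8}$, and since this quantity is $\sigma(W_t)$-measurable, it descends to a one-dimensional density: for any bounded measurable $f$,
\begin{equation*}
\mathbb E[f(C_t)] = \int f(z)\,\frac{\cosh(z/2)}{\cosh(c/2)}e^{-t/8}\,d\mathrm{Law}(W_t)(z),
\end{equation*}
which is precisely the claim. I expect no real obstacle beyond the algebraic cancellation above; the proof is essentially a one-line verification once one spots that $\log\cosh(x/2)$ is the right antiderivative to apply It\^o's formula to.
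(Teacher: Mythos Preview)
Your proof is correct and follows essentially the same route as the paper: both apply Girsanov's theorem and then use It\^o's formula on $\log\cosh(x/2)$ to collapse the Girsanov exponent to a function of $W_t$ alone (the paper phrases the cancellation via $\tanh^2+\cosh^{-2}=1$, which is equivalent to your $h''=\tfrac14(1-\tanh^2)$). Your write-up is slightly more explicit about Novikov's condition and about why the $\sigma(W_t)$-measurability of the density yields the marginal statement, but the argument is the same.
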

\begin{proof}
	We denote by $W$ the canonical process and by $\mathbb P$ the probability measure so that $W$ is a Brownian motion started at $c$. Let 
	\[Z_T:= \exp\left\{\int_0^T\frac{1}{2}\tanh(W_s/2) \, dW_s - \frac{1}{2}\int_0^T\frac{1}{4}\tanh^2(W_s/2) \, ds \right \}.\]
	Since $(\log \cosh)'=\tanh$, and $(\tanh)'=\cosh^{-2}$, we also have by Ito's formula
	\begin{align*}
	Z_T&= \exp\left\{\log\cosh(W_T/2)-\log\cosh(W_0/2) - \frac{1}{8} \int_0^T[\tanh^2(W_s/2)+ \cosh^{-2}(W_s/2)] \,ds \right \}\\
	&= \frac{\cosh(W_T/2)}{\cosh(c/2)}e^{-T/8},
	\end{align*}
where we used that $\tanh^2+\cosh^{-2}=1$.
	By Girsanov theorem and the uniqueness of the SDE for $C$, we have that the law of $(C_t)_{t\in[0,T]}$ is equal to the law of $(W_t)_{t\in[0,T]}$ under $\mathbb Q_T$, where $d\mathbb Q_T :=Z_T\,d\mathbb P$.	
\end{proof}

\begin{remark}
	It follows from this lemma that, if $c=0$, then 
	$$\frac{d\text{Law}(C_t)}{dz} = \frac{1}{2\sqrt{2\pi t}}\left \{\exp\left( -\frac{(z-t/2)^2}{2t} \right) + \exp\left( -\frac{(z+t/2)^2}{2t} \right) \right\},$$
	i.e.\ that $C$ has the same marginal laws as the simple mixture of two Brownian motions with drifts $\pm 1/2$. This connection had already been observed in \cite{2023arXiv230318088M}. {We can readily get the explicit density of $Y$ \eqref{eq:Y} (and $M$ \eqref{eq:M}) from this.}
\end{remark}

For simplicity of presentation we assume here that $C_0:=c= 0$, corresponding to  the case $x_0=1/2$. Fixing $t\in (0,\infty)$ and $T>t$, we consider $\mathbb W_{T,\pm T/2}^0$ the law of the Brownian bridge starting at $0$ at time $0$ and finishing at $\pm T/2$ at time $T$ with equal probabilities. On $[0,T-)$ this law is absolutely continuous w.r.t.\ Wiener measure started at $0$, which we have denoted $\mathbb W^0$. In fact, if $Z_t^T$ is the density of $\mathbb W_{T,\pm T/2}^0$ w.r.t.\ $\mathbb W^0$ restricted to $\mathcal F_t$, i.e. $Z_t^T=\frac{d\mathbb W_{T,\pm T/2}^0}{d\mathbb W^0}\big|_{\mathcal{F}_t}$, then $Z_t^T=f^T(t,X_t)$ where after a few calculations we find
\begin{align}\label{eq:desn_schr}
f^T(t,x):= \sqrt{\frac{T}{T-t}}\cdot \exp^{-\frac{x^2}{2(T-t)}}\cdot \exp^{-\frac{T^2}{8}\left \{\frac{1}{T-t}-\frac{1}{T} \right\} }\cdot \cosh\left(\frac{xT}{2(T-t)} \right). 	
\end{align}
Indeed, one can justify that
$$f^T(t,x)=\frac{(e^{-(x-T/2)^2/2(T-t)}+e^{-(x+T/2)^2/2(T-t)})/ \sqrt{2\pi (T-t)}}{(e^{-(T/2)^2/(2T)} + e^{-(T/2)^2/(2T)} )/\sqrt{2\pi T} },$$
from which \eqref{eq:desn_schr} follows. 
Hence, if we send $T\to\infty$, we obtain
\[Z_t^T \to \cosh(X_t/2)\exp^{-t/8}.\]
According to Lemma \ref{lem:aux_dens} this is precisely the density of $C_t$ w.r.t. $X_t(\mathbb W^0)$. In words: \\

\emph{The law of $C_t$ is the limit, as $T\to\infty$, of the law at time $t$ of Brownian motion conditioned to be $\pm T/2$ at time $T$. }\\

In fact more is true. We first compute
\[\partial_x \log(f^T(s,x))=\frac{T}{2(T-s)}\cdot \tanh\left (\frac{xT}{2(T-s)}\right )-\frac{x}{T-s}, \]
and remember that $s\mapsto\partial_x \log(f^T(s,X_s))$ is the drift of $X$ under $\mathbb W_{T,\pm T/2}^0$. Indeed, since $Z_t=f^T(t,X_t)$ is a martingale under $\mathbb{W}^0$, we have $$dZ_t=\partial_x f^T(t,X_t) \,  dX_t=Z_t \, \partial_x\log( f^T(t,X_t)) \, dX_t,$$ and hence $Z_t=\exp\left(M_t -\langle M \rangle_t \right)$ where $M_t:=\int_0^t \partial_x\log f^T(s,X_s)  \, dX_s$. 
Therefore by Girsanov's theorem $X_t - \langle X, M \rangle_t=X_t - \int_0^t \partial_x \log(f^T(s,X_s)) \, ds$ is a Brownian motion under $\mathbb W_{T,\pm T/2}^0$, i.e., the drift of $X$ is given by $s\mapsto\partial_x \log(f^T(s,X_s))$. We notice that this drift converges to $s\mapsto \frac{1}{2}\tanh(X_s/2)$ as $T \to \infty$, which is precisely the drift of $X$ under the law of $C$. In this sense we can say, as already mentioned in the introduction, that \\

\emph{On every fixed interval $[0,t]$, the law of $\mathbb W_{T,\pm T/2}^0$ restricted to $[0,t]$ converges as $T\to\infty$ to the law of $C$ restricted to $[0,t]$.}
\\

More precisely, we prove the following:
 
\begin{theorem}\label{prop:bridges}
Fix $t>0$. Let us denote the law of $C$ and $\mathbb W_{T,\pm T/2}^0$ restricted to $[0,t]$ by $\mathbb Q$ and $\mathbb P^T$ respectively. Then we have $H(\mathbb Q || \mathbb P^T) \to 0$ as $T \to \infty$. As a corollary, $\mathbb P^T$ converges to $ \mathbb Q$ in total variation. 
\end{theorem}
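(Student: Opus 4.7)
My plan is to compute $H(\mathbb Q \| \mathbb P^T)$ via the Girsanov formula for diffusion laws and then pass to the limit by dominated convergence. The discussion immediately preceding the statement already identifies both measures as Girsanov transforms of $\mathbb W^0$ restricted to $\mathcal F_t$ (for $T > t$): under $\mathbb Q$ the canonical process has the bounded drift $b^Q(x) = \frac{1}{2}\tanh(x/2)$, while under $\mathbb P^T$ its drift is
\[
b^T(s,x) \;=\; \partial_x \log f^T(s,x) \;=\; \frac{T}{2(T-s)}\tanh\!\Bigl(\frac{xT}{2(T-s)}\Bigr) - \frac{x}{T-s},
\]
with $f^T$ given by \eqref{eq:desn_schr}.

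The first step will be to establish the classical diffusion-entropy identity
\[
H(\mathbb Q \| \mathbb P^T) \;=\; \frac{1}{2}\,\mathbb E^{\mathbb Q}\!\left[\int_0^t \bigl(b^Q(X_s) - b^T(s,X_s)\bigr)^{2}\, ds\right].
\]
This follows by writing $\log(d\mathbb Q/d\mathbb P^T) = \log(d\mathbb Q/d\mathbb W^0) - \log(d\mathbb P^T/d\mathbb W^0)$ using the two Girsanov exponentials, converting the $\mathbb P^T$-Brownian-motion stochastic integral into a $\mathbb Q$-Brownian-motion integral plus a Lebesgue quadratic term, and using that the remaining $\mathbb Q$-stochastic integral is a true martingale with zero expectation. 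With this identity in hand, the convergence reduces to dominated convergence applied to the right-hand side. Pointwise, $b^T(s,x) \to b^Q(x)$ as $T \to \infty$ by elementary calculus, since $\tfrac{T}{2(T-s)} \to \tfrac{1}{2}$, $\tfrac{xT}{2(T-s)} \to x/2$, and $\tfrac{x}{T-s} \to 0$. For uniform domination, restricting to $T \geq \max(2t,2)$ gives $T - s \geq T/2$ on $[0,t]$, whence $|b^T(s,x)| \leq 1 + 2|x|/T \leq 1 + |x|$, while $|b^Q| \leq 1/2$; therefore $(b^Q - b^T)^{2} \leq C(1 + X_s^{2})$ uniformly in large $T$. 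Since $C$ satisfies an SDE with drift bounded by $1/2$, a trivial moment bound from $|C_s| \leq |c| + s/2 + |W_s|$ yields $\sup_{s \leq t}\mathbb E^{\mathbb Q}[X_s^{2}] < \infty$, so dominated convergence delivers $H(\mathbb Q \| \mathbb P^T) \to 0$.

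The total-variation corollary is then immediate from Pinsker's inequality $\|\mathbb Q - \mathbb P^T\|_{TV} \leq \sqrt{\frac{1}{2}\, H(\mathbb Q \| \mathbb P^T)}$. The main obstacle I anticipate is purely bookkeeping in the first step: justifying that the relevant stochastic integral appearing in the decomposition of $\log(d\mathbb Q/d\mathbb P^T)$ is a genuine $\mathbb Q$-martingale rather than only local. Given the linear growth of both drifts in $x$ uniformly in large $T$, together with the Gaussian-type tails of $X$ under $\mathbb Q$, Novikov's condition or a simple localization argument should handle this without difficulty.
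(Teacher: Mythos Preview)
Your proposal is correct and follows essentially the same route as the paper: both identify the drifts of $\mathbb Q$ and $\mathbb P^T$ via Girsanov, derive the entropy identity $H(\mathbb Q\|\mathbb P^T)=\tfrac12\,\mathbb E^{\mathbb Q}\bigl[\int_0^t(b^Q-b^T)^2\,ds\bigr]$, and then apply dominated convergence using boundedness of $\tanh$ together with square-integrability of $X$ under $\mathbb Q$, concluding with Pinsker. Your treatment of the dominating function and the martingale-property caveat is in fact slightly more explicit than the paper's.
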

\begin{proof}
According to the discussion above, $\mathbb Q$ and $\mathbb P^T$ are distributions of SDEs 
\[dX_s=\frac{1}{2}\tanh(X_s/2)\,ds + dW^{\mathbb Q}_s \quad \text{and} \quad dX_s= \partial_x \log(f^T(s,X_s)) \, ds +dW^{\mathbb P^T}_s,\]
where $W^{\mathbb Q}$, $W^{\mathbb P^T}$ are Brownian motions under $\mathbb Q$ and $\mathbb P^T$ respectively. Denoting $\mathcal{E}(M)=(\exp(M_t-\langle M \rangle_t))_{t \in [0,\infty)}$ for any $\mathbb P^T$ martingale $M$, then according to Girsanov's theorem, 
\begin{align*}
\frac{d\mathbb Q}{d \mathbb P^T}=\mathcal{E}\left(\int_0^{\cdot} \left( \frac{1}{2} \tanh(X_u/2)- \partial_x \log(f^T(u,X_u)) \right)\,dW_u^{\mathbb P^T}\right),
\end{align*}
and hence 
\begin{align*}
H(\mathbb Q || \mathbb P^T)=&  \frac{1}{2} \E^{\Q} \left[\int_0^t \left( \frac{1}{2} \tanh(X_u/2)- \partial_x \log(f^T(u,X_u)) \right)^2 \, du \right] \\
=& \frac{1}{2} \E^{\Q} \left[ \int_0^t \left(\frac{1}{2}\tanh(X_u/2)-\frac{T}{2(T-u)}\cdot \tanh\left (\frac{X_uT}{2(T-u)}\right )+\frac{X_u}{T-u} \right)^2 \,du \right].
\end{align*}
Noting that $\tanh$ is bounded, $(X_u)$ is square integrable under $\mathbb Q$, so an application of the dominated convergence theorem completes the first claim. The second claim follows by Pinsker's inequality.
\end{proof}
 %Remembering that $C_t= \frac{1}{2}\log \tan(Y_t/2)$, equiv.\ $Y_t=2\arctan\circ \exp(C_t)$, this means that the (transformed) Aldous martingale is something like a monotone transformation of Brownian motion conditioned to diverge. 

Now we are ready to explain the connection to the classical Sch\"{o}dinger problem. A simple version of the latter is as follows: Given $\nu \in \mathcal{P}(\mathbb{R})$, the Sch\"{o}dinger problem looks for minimizers of $$\inf\left\{H(\mathbb{Q} || \mathbb{W}^0): \, \mathbb{Q}_0=\delta_0, \mathbb{Q}_T=\nu \right\} .$$ 
Recall the tensorization property of relative entropy $H$, i.e., $H(\mathbb{Q} || \mathbb{W}^0)=H(\nu || \mathcal{N}(0,T))+ \int H(\mathbb{Q}_T^x || \mathbb{W}^0_{T,x})\, \nu(dx)$ where $\mathbb{Q}_T^x$ is the disintegration w.r.t.\ the time $T$ marginal and $\mathbb{W}^0_{T,x}$ is the conditioning law of Brownian motion that ends up with $x$ at time $T$. Hence in the case that $H(\nu || \mathcal{N}(0,T))<+\infty$, the minimizer of corresponding Sch\"{o}dinger problem is uniquely given by $\mathbb{Q}(A):= \int \mathbb{W}^0_{T,x}(A) \, \nu(dx)$ for $A \in \mathcal{B}(\Omega)$. Therefore the conditioning of Brownian motion is akin to the classical Schr\"odinger problem, and this points to an intriguing connection between martingale optimal transport and Schr\"odinger problems. Indeed taking $\nu=\frac{1}{2}(\delta_{-T/2}+\delta_{T/2})$ and $\nu^{\epsilon}=\nu\ast \mathcal{N}(0,\epsilon)$,  then  $\mathbb W_{T,\pm T/2}^0$ is the limit of solutions $\mathbb{Q}^{\epsilon}(A):= \int \mathbb{W}^0_{T,x}(A) \, \nu^{\epsilon}(dx)$  as $\epsilon \to 0$.  Another way to stress this is to recall that the drift of $C$, namely $\frac{1}{2}\tanh(C_t/2)=\frac{2Y_t-1}{2}$, is a martingale, and recalling that this would be precisely the first order optimality condition for the optimizer of the Schr\"{o}dinger problem.
\subsection{$Y$ and a filtering problem}
Recall that $Y$ fulfills
$$dY_t=Y_t(1-Y_t)dW_t,$$
for $t\in [0,\infty)$ and $Y_0=x_0\in(0,1)$. In fact $Y$ can be interpreted from the lens of filtering theory as we now explain (but this is well-known):

Let $\mathbb W$ denote Wiener measure, i.e.\ the law of Brownian motion, while $\bar{\mathbb W}$ is the law of Brownian motion with drift 1. Now denote $\mathbb P(du,d\omega)=(1-x_0)\delta_0(du)\mathbb W + x_0\delta_1(du)\bar{\mathbb W}$ on  $\{0,1\}\times C([0,\infty);\mathbb R)$. In other words this is the law of a process that can be either Brownian motion without drift or with drift equal to 1, depending on an independent $Bernoulli(x_0)$ random variable. 
Finally denote by $X_t(u,\omega)=\omega_t$ the canonical process on $\{0,1\}\times C([0,\infty);\mathbb R)$, and $(\mathcal F_t)$ its filtration. Then the stochastic process
$$P_t:= \mathbb P(u=1|\mathcal F_t)$$
satisfies the SDE of $Y$, i.e.\ is equal in law to $Y$. Indeed, Girsanov theorem shows that
\begin{align*}
P_t &=\frac{x_0d \bar{\mathbb W}|_{\mathcal F_t} }{x_0 d\bar{\mathbb W}|_{\mathcal F_t} + (1-x_0)d\mathbb W|	_{\mathcal F_t}} \\
& = \frac{x_0e^{X_t-t/2}}{x_0e^{X_t-t/2} + (1-x_0)}.
\end{align*}
So $P_t= f(X_t-t/2)$ with $f(x)=\frac{x_0e^x}{x_0e^x+(1-x_0)}$. A few computations show that $f'=f-f^2$ and $f''=f-3f^2+2f^3$. Thus
\begin{align*}
	dP_t&=(P_t-P_t^2)(dX_t-dt/2)+\frac{1}{2}(P_t-3P_t^2+2P_t^3)dt \\
	&=(P_t-P_t^2)(P_td_t + dB^X_t-dt/2)+\frac{1}{2}(P_t-3P_t^2+2P_t^3)dt \\
	& = P_t(1-P_t)dB^X_t +\frac{1}{2}(2P_t^2-2P_t^3-P_t+P_t^2+ P_t-3P_t^2+2P_t^3)dt \\
	&= P_t(1-P_t)dB^X_t, 
\end{align*}
where for the second equality we used that $dX_t=P_tdt+dB^X_t$, where $B^X$ is some Brownian motion adapted to $(\mathcal F_t)$ under $\mathbb{P}$. Indeed, this follows from the fact that $\mathbb E[X_t-X_r|\mathcal F_r]=(t-r)P_r=\mathbb E [\int_r^t P_s \,ds |\mathcal F_r]$, valid for $r\leq t$, which shows that $X_t - \int_0^t P_sds$ is a martingale in the aforementioned filtration.

\bibliographystyle{abbrv}
%\bibliography{joint_biblio}
\bibliography{ref}

\end{document}